\newcommand\vanish[1]{}
\newtheorem{lem}{Lemma}[section]
\newtheorem{cor}[lem]{Corollary}
\newtheorem{prop}[lem]{Proposition}
\newtheorem{thm}[lem]{Theorem}
  \newtheorem{exama}{Example} [section]
\newenvironment{exam}{\begin{exama}\rm}{\end{exama}}
  \newtheorem{conja}{Conjecture}[section]
\newenvironment{conj}{\begin{conja}\rm}{\end{conja}}
  \newtheorem{questa}{Question}[section]
\newenvironment{question}{\begin{questa}\rm}{\end{questa}}
\numberwithin{equation}{section}
\numberwithin{figure}{section}
\numberwithin{table}{section}
\renewcommand{\phi}{\varphi}                 % Personal preferences.
\renewcommand{\epsilon}{\varepsilon}
\newcommand\eset{\varnothing}
\newcommand\setm{\setminus}
\newcommand\ub\underbar
\newcommand\vstrut[1]{\rule{0ex}{#1}}
\newcommand\cupdot {\mbox{\hspace{.15em}$\cup$\hspace{-.47em}$\cdot$\hspace{.4em}}} 
\newcommand \id{\mathrm{id}}
\newcommand\del{\nabla}
\newcommand\inv{^{-1}}
\newcommand\Aut{\operatorname{Aut}}
\newcommand\SwAut{\operatorname{SwAut}}
\newcommand\Sw{\operatorname{Sw}}
\newcommand\signs{\{+,-\}}
\newcommand\cC{{\mathcal C}}
\newcommand \fA{\mathfrak A}
\newcommand \fD{\mathfrak D}
\newcommand \fG{\mathfrak G}
\newcommand \fH{\mathfrak H}
\newcommand \fK{\mathfrak K}
\newcommand \fS{\mathfrak S}
\newcommand \fV{\mathfrak V}
\newcommand\fZ{\mathcal{Z}}
\newcommand\barp{\bar p}
\newcommand\eps{\epsilon}
\newcommand\bareps{\bar\epsilon}
\newcommand\5{\{1,2,3,4,5\}}
\newcommand\4{\{1,2,3,4\}}
\newcommand\3{\{1,2,3\}}
\newcommand\efg{\{e,f,g\}}
\newcommand\ef{\{e,f\}}
\begin{document}

\begin{center}
{\Large
Six Signed Petersen Graphs, and their Automorphisms
}
\vspace{0.5cm}

{\large
Thomas Zaslavsky}\\[0.5cm]
Department of Mathematical Sciences, \\ Binghamton University (SUNY), \\ Binghamton, NY 13902-6000, U.S.A. \\ E-mail: {\tt zaslav@math.binghamton.edu} \\ \today

\end{center}

\bigskip\hrule\bigskip

\vspace{0.5cm}

\noindent{\textbf{Keywords:}
Signed graph; Petersen graph; balance; switching; frustration; clusterability; switching automorphism; proper graph coloring
}

\vspace{0.5cm}    

\noindent{\textbf{2010 Mathematics Subject Classifications:}
Primary 05C22; Secondary 05C15, 05C25, 05C30}

\begin{quote}
{
\emph{Abstract.}
Up to switching isomorphism there are six ways to put signs on the edges of the Petersen graph.  We prove this by computing switching invariants, especially frustration indices and frustration numbers, switching automorphism groups, chromatic numbers, and numbers of proper 1-colorations, thereby illustrating some of the ideas and methods of signed graph theory.  We also calculate automorphism groups and clusterability indices, which are not invariant under switching.  In the process we develop new properties of signed graphs, especially of their switching automorphism groups.
}
\end{quote}

\setcounter{tocdepth}{1}
\tableofcontents

%%====================================
\section{Introduction}\label{intro}

The Petersen graph $P$ is a famous example and counterexample in graph theory, making it an appropriate subject for a book (see \cite{TPG}).  With signed edges it makes a fascinating example of many aspects of signed graph theory as well.  There are $2^{15}$ ways to put signs on the edges of $P$, but in many respects only six of them are essentially different.  We show how and why that is true as we develop basic properties of these six signed Petersens.

\begin{figure}[htbp]
\begin{center}
\includegraphics[scale=.6]{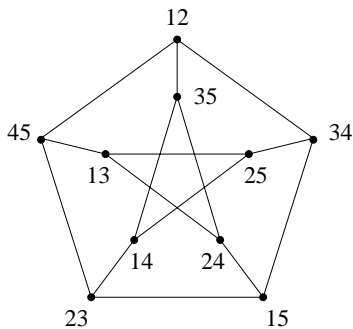}
\caption{$P$, the Petersen graph.}
\label{F:P}
\end{center}
\end{figure}

The fundamental property of signed graphs is balance.  A signed graph is \emph{balanced} if all its circles (circuits, cycles, polygons) have positive sign product.  
Harary introduced signed graphs and balance \cite{NB} (though they were implicit in K\"onig \cite[\S X.3]{Konig}).  Cartwright and Harary used them to model social stress in small groups of people in social psychology \cite{CH}.  Subsequently signed graphs have turned out to be valuable in many other areas, some of which we shall allude to subsequently.

The opposite of balance is frustration.  Most signatures of a graph are unbalanced; but they can be made balanced by deleting (or, equivalently, negating) edges.  The smallest number of edges whose deletion makes the graph balanced is the \emph{frustration index}, a number which is implicated in certain questions of social psychology (\cite{PsL, MSB} et al.) and spin-glass physics (\cite{Toulouse, BMRU} et al.).  We find the frustration indices of all signed Petersen graphs (Theorem \ref{T:fr}).

The second basic property of signed graphs is switching equivalence.  Switching is a way of turning one signature of a graph into another, without changing circle signs.  Many, perhaps most properties of signed graphs are unaltered by switching, the frustration index being a notable example.  The first of our main theorems is that there are exactly six equivalence classes of signatures of $P$ under the combination of switching and isomorphism (Theorem \ref{T:types}).  Figure \ref{F:types} shows a representative of each switching isomorphism class.  In each representative the negative edges form a smallest set whose deletion makes the signed Petersen balanced.  Hence, we call them \emph{minimal signatures} of $P$ (see Theorem \ref{T:fr}).  Because there are only six switching isomorphism classes of signatures, the frustration index of every signature of $P$ can be found from those of the minimal signatures.

\begin{figure}[htbp]
\begin{center}
\includegraphics[scale=.5]{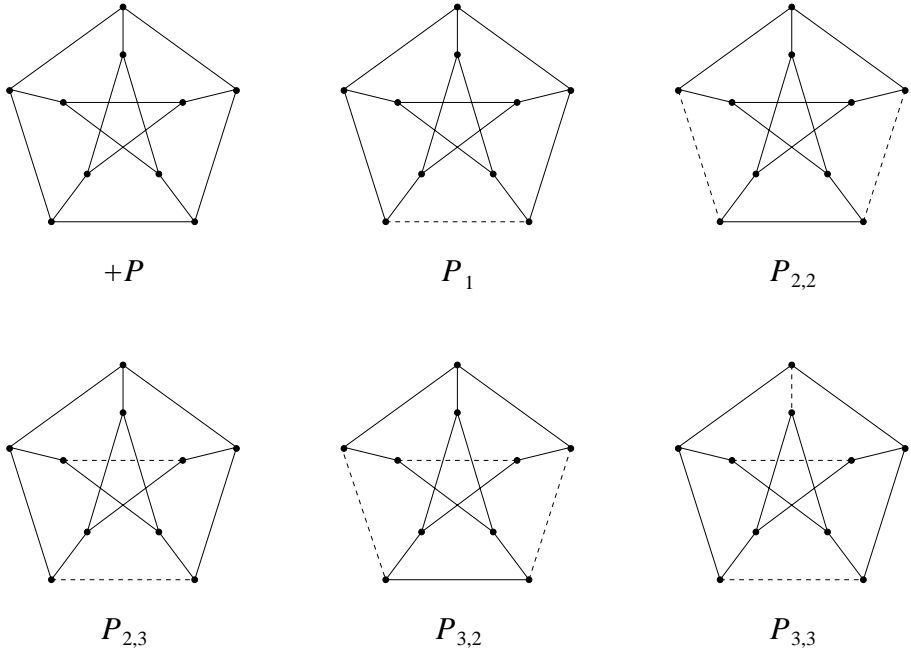}
\caption{The six switching isomorphism types of signed Petersen graph.  Solid lines are positive; dashed lines are negative.}
\label{F:types}
\end{center}
\end{figure}

The second main theorem, which occupies the bulk of this paper, is a computation of the automorphism and switching automorphism groups of the six minimal signatures (Theorem \ref{T:aut}).  An automorphism has the obvious definition: it is a graph automorphism that preserves edge signs.  This group is not invariant under switching.  It is not even truly signed-graphic, for as concerns automorphisms a signed graph is merely an edge 2-colored graph.  The proper question for signed graphs regards the combination of switching with an automorphism of the underlying graph.  
The group of switching automorphisms of a signed graph is, by its definition, invariant under switching, so just six groups are needed to know them all.  Some of the groups are trivial, but one is so complicated that it takes pages to describe it thoroughly.

Isomorphic minimal signatures may not be equivalent under the action of the switching group.  The number of switching inequivalent signatures of a given minimal isomorphism type is deducible from the order of the switching automorphism group (Section \ref{orbit}).

Two further properties are treated more concisely.  First, a signed graph can be colored by signed colors.  That leads to two chromatic numbers, depending on whether or not the intrinsically signless color 0 is accepted.  The chromatic numbers are invariant under switching (and isomorphism); thus they help to distinguish the six minimal signatures by showing their inequivalence under switching isomorphism (Theorem \ref{T:col}).  The two chromatic numbers are aspects of two chromatic polynomials, but we make no attempt to compute those polynomials, as they have degree 10.

Finally, we take a brief excursion into a natural generalization of balance called \emph{clusterability} (Section \ref{clust}).  This, like the automorphism group, is not switching invariant, but it has attracted considerable interest, most recently in connection with the organization of data (cf.\ \cite{Bansal} et al.), and has complex properties that have been but lightly explored.

Signed graphs, signed Petersens in particular, have other intriguing aspects that we do not treat.  Two are mentioned in the concluding section but they hardly exhaust the possibilities.

%%====================================
\section{Graphs and Signs}\label{defs}

We write $V$ and $E$ for the vertex and edge sets of a graph $\Gamma$ or signed graph $\Sigma$, except when they may be confused with the same sets of another graph.  
The complement of $X \subseteq V$ is $X^c := V \setm X$.  
The (open) neighborhood of a vertex $v$ is $N(v)$; the closed neighborhood is $N[v] = N(v) \cup \{v\}$.
A \emph{cut} in a graph is a set  $\del X := \{uv \in E : u \in X \text{ and } v \notin X\}$ where $X \subseteq V$.  
We call two or more substructures of a graph, such as edge sets or vertex sets, \emph{automorphic} if there are graph automorphisms under which any one is carried to any other.  

A \emph{signed graph} is a pair $\Sigma := (\Gamma,\sigma)$ where $\Gamma = (V,E)$ is a graph and $\sigma: E \to \signs$ is a \emph{signature} that labels each edge positive or negative.  
Hence, a \emph{signed Petersen graph} is $(P, \sigma)$.  Two examples are $+P := (P,+)$, where every edge is positive, and $-P := (P,-)$, where every edge is negative.  
The \emph{underlying graph} of $\Sigma$ is $\Sigma$ without the signs, denoted by $|\Sigma|$.  
We say $\Sigma$ is \emph{homogeneous} if it is all positive or all negative, and \emph{heterogeneous} otherwise; so $+P$ and $-P$ are the homogeneous signed Petersens.  
The set of positive edges of $\Sigma$ is $E^+$, that of negative edges is $E^-$; $\Sigma^+$ and $\Sigma^-$ are the corresponding (unsigned) graphs $(V,E^+)$ and $(V,E^-)$.  
The \emph{negation} of $\Sigma$ is $-\Sigma = (\Gamma,-\sigma)$, the same graph with all signs reversed.  
A compact notation for a signed Petersen graph with negative edge set $S$ is $P_S$. 

The sign of a circle (i.e., a cycle, circuit, or polygon) $C$ is $\sigma(C) :=$ the product of the signs of the edges in $C$.  
The most essential fact about a signed graph usually is not, as one might think, the edge sign function itself, but only the set $\cC^+(\Sigma)$ of circles that have positive sign.  If this set consists of all circles we call the signed graph \emph{balanced}.  Such a signed graph is equivalent to its unsigned underlying graph in most ways.  
We call $\Sigma$ \emph{antibalanced} if $-\Sigma$ is balanced.

\begin{prop}[{Harary \cite{NB}}]\label{P:balance}
$\Sigma$ is balanced if and only if $V$ can be divided into two sets so all positive edges are within a set and all negative edges are between the sets.
\end{prop}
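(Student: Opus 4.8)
The plan is to prove both directions of this biconditional characterization of balance. For the easy direction, suppose $V = V_1 \cupdot V_2$ is a partition (the ``Harary bipartition'') such that every positive edge has both endpoints in the same $V_i$ and every negative edge has endpoints in different parts. I would then take any circle $C$ and count its negative edges: walking around $C$, each negative edge switches the part we are in, and since $C$ closes up we must switch an even number of times. Hence $|E^-\cap C|$ is even, so $\sigma(C) = (-1)^{|E^-\cap C|} = +1$, and $\Sigma$ is balanced.

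For the substantive direction, assume $\Sigma$ is balanced; I want to produce the bipartition. It suffices to treat each connected component of $\Gamma$ separately (and combine the parts arbitrarily), so assume $\Gamma$ is connected. Fix a vertex $v_0$ and define $V_1 := \{v : \text{some, equivalently every, } v_0\text{--}v \text{ walk has an even number of negative edges}\}$ and $V_2 := V \setm V_1$. The key point is that this is well-defined: if $W$ and $W'$ are two $v_0$--$v$ walks, then $W$ followed by the reverse of $W'$ is a closed walk, which decomposes into circles (plus backtracking edges that contribute an even count), and since $\Sigma$ is balanced every such circle has an even number of negative edges; hence $W$ and $W'$ have negative-edge counts of the same parity. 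Once $V_1, V_2$ are well-defined, checking the edge condition is immediate: for an edge $uv$ with sign $\epsilon$, append $uv$ to a walk reaching $u$ to get a walk reaching $v$, so the parities at $u$ and $v$ differ iff $\epsilon = -$; thus a positive edge keeps both ends in the same part and a negative edge puts them in opposite parts.

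The main obstacle is the well-definedness argument in the balanced direction --- specifically, justifying that a closed walk has an even number of negative-edge traversals whenever every \emph{circle} does. This requires the standard fact that a closed walk can be reduced, by successively removing backtracks (traversing an edge and immediately returning, which changes the negative count by an even number) and splitting off circles, down to the trivial walk; each circle split off contributes an even negative count by balance, so the original closed walk does too. I would state this reduction explicitly but not belabor the induction on walk length. Everything else is a short direct verification. I will present the proof for a connected $\Gamma$ and remark that the general case follows by applying the argument componentwise and taking the union of the resulting partitions.
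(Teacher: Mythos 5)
Your proof is correct, and it is the standard argument for Harary's balance theorem. Note, though, that the paper itself supplies no proof of this proposition: it is stated with an attribution to Harary \cite{NB} and used as a known result, so there is nothing internal to compare your argument against. Both directions of your argument are sound; the one place that genuinely needs care is exactly the one you flag, namely that balance of all \emph{circles} forces every \emph{closed walk} to traverse an even number of negative edges, and your reduction by stripping backtracks and splitting off circles handles it. If you want to avoid that reduction lemma entirely, a slightly slicker route for the forward direction is to fix a spanning tree of each component, put $v$ in $V_1$ or $V_2$ according to the sign of the tree path from $v_0$ to $v$, and then check the edge condition: for tree edges it is immediate, and for a non-tree edge $uv$ the fundamental circle through $uv$ is positive by balance, which forces $\sigma(uv)$ to equal the product of the two tree-path signs. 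That version replaces the closed-walk decomposition with the single observation that fundamental circles generate all circle signs, and it also makes the connection to switching transparent (the function $\zeta$ that is $+$ on $V_1$ and $-$ on $V_2$ switches $\Sigma$ to the all-positive signature, in the sense of Lemma \ref{L:switching}). Either way, your write-up is complete once the componentwise remark is included, as you do.
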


We say `divided' rather than `partitioned' because one set may be empty.  If that is so, the signature is all positive.

%%====================================
\section{Petersen Structure}\label{structure}

The Petersen graph $P$ is the complement of the line graph of $K_5$: $P = \overline{L(K_5)}$.  Thus, its vertices $v_{ij}$ are in one-to-one correspondence with the ten unordered pairs from the set $\5$ and its fifteen edges are all the pairs $v_{ij}v_{kl}$ such that $\{i,j\} \cap \{k,l\} = \eset$.  (For legibility, in subscripts we often omit the $v$ of vertex names.)
We usually write $V$ and $E$ for $V(P)$ and $E(P)$ when discussing the Petersen graph as there can be no confusion with the vertex and edge sets of a general graph.  

For use later we want structural information about $P$.  

As $P$ has edge connectivity 3, the smallest cut has three edges.

The automorphism group $\Aut P$ is well known to be the symmetric group $\fS_5$ with action on $V$ induced by the permutations of the set $\5$.  Writing $\fS_T$ for the group of permutations of the set $T$, we identify $\Aut P$ with $\fS_{\5}$.  We use the same symbol for a permutation of $\5$ and the corresponding automorphism of $P$, as there is little danger of confusion.  $\Aut P$ carries any oriented path of length 3 to any other; hence it is also transitive on pairs of adjacent edges (distance 1) and on pairs of edges at distance 2.  (For these properties see, e.g., \cite[Section 4.4]{AGT}.)  Furthermore, $\Aut P$ carries any nonadjacent vertex pair to any other.

The maximum size of a set of independent vertices in $P$ is four.  Each maximum independent vertex set has the form $X_m := \big\{ v_{im} : i \in \5 \setm m \big\}$, and any three vertices in $X_m$ determine $m$.  For any $m,n \in \5$, $X_m$ and $X_n$ are automorphic.  
An independent set of three vertices is either the neighborhood of a vertex, or a subset of a maximum independent set $X_m$.  Deleting an independent vertex set leaves a connected graph except that $P \setm N(v) = C_6 \cupdot K_1$ and $P \setm X_m$ is a three-edge matching.  If $|W|=3$ and $W \subset X_m$, then $P \setm W$ is a tree consisting of three paths of length 2 with one endpoint in common.

%%=================
\subsection{Hexagons}\label{hex}  

Each hexagon is $E(P \setm N[v])$ for a vertex $v$.  Thus there is a one-to-one correspondence between vertices and hexagons; we write $H_v = H_{lm}$ for the hexagon that corresponds to $v = v_{lm}$.  The stabilizer of $H_{lm}$ is $\fS_{\{l,m\}} \times \fS_{\5 \setm \{l,m\}}$.  A hexagon is determined by any two of its edges that have distance 2.  Furthermore, any two hexagons are automorphic.

%%=================
\subsection{Matchings}\label{matchings}  

We need to know all automorphism types of a matching in $P$.  Let $M_k$ denote a matching of $k$ edges.

Matchings of 1 edge are obviously all automorphic.  

Let $M_{2,d}$ denote a pair of edges at distance $d=2$ or $3$.  Any 2-edge matching is an $M_{2,2}$ or $M_{2,3}$.  All $M_{2,2}$ matchings are automorphic because $\Aut P$ is transitive on paths of length 3.  All $M_{2,3}$ matchings are automorphic; for the proof see the treatment of $M_{3,3}$.

An $M_5$ can only be a cut between two pentagons, since $P \setm M_5$ is a 2-factor and $P$ is non-Hamiltonian.  All are clearly automorphic.

A matching of 4 edges leaves two vertices unmatched.  If they are adjacent, $M_4 = M_5\ \setm$ edge; all such matchings are automorphic.  If they are nonadjacent, say they are $v_{ik}$ and $v_{jk}$ in Figure \ref{F:m3types}.  Then $M_4$ consists of $a$ and one of the two $M_{3,2}$'s in $H_{lm}$.  %The two choices of the $M_{3,2}$ are automorphic by $(ij)$ or $(lm)$.  
Call this type of matching $M_4'$.  
Interpreting $M_4'$ as one of the matchings in $H_{lm}$ together with one of the edges incident with $v_{lm}$, it is easy to see that all matchings of type $M_4'$ are automorphic.  Consequently, there are two automorphism classes of 4-edge matchings.

There are four nonautomorphic kinds of 3-edge matching $M_3$.  First we describe them; then we prove there are no other kinds.  

By $M_{3,3}$ we mean a set of three edges, each pair having the same distance $3$.  Each $M_{3,3}$ has the form 
$$M_{3(m)} := E(P \setm X_m) = \big\{v_{ij}v_{kl}: \{i,j,k,l\} = \5 \setm m \big\}.$$  
There are five such edge sets, one for each $m \in \5$; they partition $E(P)$.  Obviously, all the $M_{3(m)}$'s are automorphic.  
(An $M_{2,3}$ lies in a unique $M_{3,3}$, since the $M_{2,3}$ determines the value of $m$.  That implies there are 15 different $M_{2,3}$'s.)  
Permuting $\5 \setm m$ permutes the edges of $M_{3(m)}$; it follows that any $M_{2,3}$ is automorphic to any other.

We define $M_{3,2}$ to consist of alternate edges of a hexagon, say $H_{lm}$, which we call the \emph{principal hexagon} of the three edges.  There are two such sets for each hexagon, hence 20 $M_{3,2}$'s in all, and they are all automorphic to each other.  The notation $M_{3,2}$ reflects the fact that the edges in the matching all have distance two from each other.  
Each $M_{2,2}$ is contained in a unique hexagon, hence in a unique $M_{3,2}$; thus, there are 40 $M_{2,2}$'s.  

There is another way to form a matching of three edges at distance 2 from one another.  In a pentagon $v_{ij}v_{kl}v_{mi}v_{jk}v_{lm}v_{ij}$ take the edges $e=v_{kl}v_{im}$ and $f=v_{jl}v_{km}$ and the edge $a=v_{ij}v_{lm}$.  We call this type $M_3'$.  Another view of $M_3'$ is as $M_5\ \setm$ two edges.  All matchings of type $M_3'$ are automorphic but they are not automorphic to any $M_{3,2}$ because $e,f,a$ do not lie in a hexagon.

A fourth type of 3-edge matching, call it $M_{3,2/3}$, consists of $e$, $f$, and $b=v_{jk}v_{lm}$.  The distances of these edges are 2, except that $b$ and $f$ have distance 3.  All $M_{3,2/3}$'s are automorphic, but the distance pattern proves an $M_{3,2/3}$ is not automorphic to any other type.

\begin{lem}\label{L:m3types}
Every $3$-edge matching in $P$ is an $M_{3,3}$, an $M_{3,2}$, an $M_{3,2}'$, or an $M_{3,2/3}$.
\end{lem}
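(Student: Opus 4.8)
The plan is to classify a $3$-edge matching $M=\{e_1,e_2,e_3\}$ of $P$ by the unordered triple of pairwise distances $d(e_i,e_j)$. Since the three edges are pairwise disjoint, no pair is adjacent, and since every $2$-edge matching of $P$ is an $M_{2,2}$ or an $M_{2,3}$ (Section \ref{matchings}), each $d(e_i,e_j)$ equals $2$ or $3$. Thus the distance triple is $\{3,3,3\}$, $\{2,3,3\}$, $\{2,2,3\}$, or $\{2,2,2\}$, and I would treat these four cases.

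The structural ingredient I would isolate first is: each edge $v_{ij}v_{kl}$ of $P$ has a unique \emph{missing element} $m\in\5\setm\{i,j,k,l\}$, it lies in exactly one of the matchings $M_{3(m)}$ (since the $M_{3(m)}$ partition $E(P)$), and two disjoint edges are at distance $3$ if and only if they lie in a common $M_{3(m)}$, i.e.\ share a missing element. One direction holds because the three edges of each $M_{3(m)}$ are pairwise at distance $3$; the other because an $M_{2,3}$ lies in a \emph{unique} $M_{3,3}$ (Section \ref{matchings}), which must then be the $M_{3(m)}$ for the common missing element. Granting this, two cases are immediate. If the triple is $\{3,3,3\}$, then $e_1,e_2,e_3$ share a missing element $m$, so $M\subseteq M_{3(m)}$, and since $|M_{3(m)}|=3$ we get $M=M_{3(m)}$, an $M_{3,3}$. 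The triple $\{2,3,3\}$ is impossible: if, say, $e_3$ is at distance $3$ from both $e_1$ and $e_2$, then $e_1,e_2,e_3$ all carry the missing element of $e_3$, so again $M=M_{3(m)}$ and the triple is $\{3,3,3\}$, a contradiction.

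There remain the triples $\{2,2,2\}$ and $\{2,2,3\}$. If the triple is $\{2,2,2\}$ and $e_1,e_2,e_3$ lie in a common hexagon $H$, then, being pairwise disjoint edges of the $6$-cycle $H$ (which has six vertices), they form a perfect matching of $H$; a $6$-cycle has exactly two perfect matchings, its two sets of alternate edges, so $M$ is an $M_{3,2}$. If the $\{2,2,2\}$ triple is not contained in a hexagon, I claim $M$ is automorphic to $M_3'$ (the $M_{3,2}'$ of the statement): using transitivity of $\Aut P$ on distance-$2$ pairs (Section \ref{structure}) to normalize $\{e_1,e_2\}$ and the stabilizer of the hexagon they span (Section \ref{hex}) to act on the candidate third edges, one checks that the third edges lying outside that hexagon and at distance $2$ from both $e_1$ and $e_2$ form a single orbit, which is that of $M_3'$. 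For the triple $\{2,2,3\}$, let $\{e_1,e_2\}$ be the distance-$3$ pair, lying in $M_{3(m)}=\{e_1,e_2,e_2'\}$ with $e_3\ne e_2'$; using transitivity of $\Aut P$ on $M_{2,3}$'s and the action of the stabilizer of $M_{3(m)}$ on its edges (Section \ref{matchings}) to normalize $\{e_1,e_2\}$, one checks that the edges disjoint from, and at distance $2$ from, both $e_1$ and $e_2$ form a single orbit, namely that of $M_{3,2/3}$.

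Apart from routine distance bookkeeping, the substance is the missing-element characterization of distance-$3$ pairs, which handles the $\{3,3,3\}$ and $\{2,3,3\}$ cases cleanly, together with the two finite orbit computations in the last paragraph showing that the pattern $\{2,2,3\}$ and the non-hexagonal pattern $\{2,2,2\}$ each support exactly one automorphism class. I expect those last two computations to be the only real work; each should be short once the relevant stabilizer and transitivity facts from Section \ref{structure} are brought to bear.
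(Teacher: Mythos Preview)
Your proposal is correct but takes a somewhat different route from the paper. The paper splits into only two cases: either all three pairwise distances are $3$ (whence $M_3=M_{3,3}$, since two edges at distance $3$ have a unique common third edge at distance $3$), or some pair $e,f$ is at distance $2$; in the latter case it simply enumerates, with the aid of a figure, the four possible third edges up to the $e\leftrightarrow f$ symmetry and identifies each resulting matching as $M_{3,2}$, $M_3'$, $M_{3,2/3}$, or $M_3'$ again. Your approach instead classifies by the full distance triple and uses the missing-element characterization of distance-$3$ pairs to dispatch $\{3,3,3\}$ and to rule out $\{2,3,3\}$ cleanly---a nice structural observation the paper leaves implicit in its enumeration. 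The tradeoff is that your handling of the non-hexagonal $\{2,2,2\}$ case and the $\{2,2,3\}$ case is deferred to orbit computations under the stabilizer of a normalized pair, which amounts to the same finite check the paper does by direct inspection, only phrased more abstractly. Both arguments are short; the paper's is more concrete, yours more conceptual.
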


\begin{proof}
Let $M_3$ be a 3-edge matching.  If its edges are all at distance 3 from each other, then $M_3$ can only be $M_{3,3}$, as two edges at distance 3 have a unique edge at the same distance from both.

\begin{figure}[hbtp]
\begin{center}
\includegraphics[scale=.6]{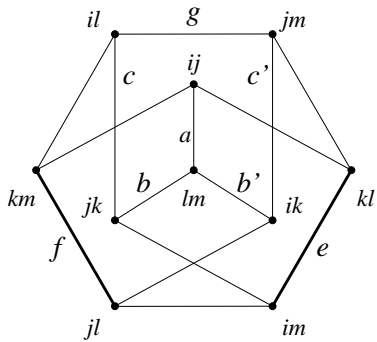}
\caption{The four kinds of $3$-edge matching in $P$.}
\label{F:m3types}
\end{center}
\end{figure}

If $M_3$ contains edges $e,f$ at distance 2, there are four potential third edges up to the symmetry that interchanges $e$ and $f$ (see Figure \ref{F:m3types}).  Choosing $g$ for $M_3$, the hexagon $H_{lm}$ contains $M_3$ so we have $M_{3,2}$.  Choosing $a$ for $M_3$, the pentagon $v_{ij}v_{kl}v_{mi}v_{jk}v_{lm}v_{ij}$ shows we have $M_3'$.  Choosing $b$, we have $M_{3,2/3}$.  Choosing $c$, we have $M_3'$ again with the pentagon $v_{im}v_{jk}v_{il}v_{km}v_{jl}v_{im}$.
\end{proof}

%%====================================
\section{Switching}\label{sw}

Two signed graphs, $\Sigma_1 = (\Gamma_1,\sigma_1)$ and $\Sigma_2 = (\Gamma_2,\sigma_2)$, are \emph{switching equivalent} (written $\Sigma_1 \sim \Sigma_2$) if $\Gamma_1 = \Gamma_2$ and there is a function $\zeta: V_1 \to \signs$ (a \emph{switching function}) such that $\sigma_2(vw) = \zeta(v)\sigma_1(vw)\zeta(w)$ for every edge $vw$.  We write $\sigma_2 = \sigma_1^\zeta$ and $\Sigma_2 = \Sigma_1^\zeta$; that is, we write the switched signature or graph as if we were conjugating in a group---and indeed switching is a graphical generalization of conjugation.  
Another way to state switching is to switch a vertex set $X \subseteq V$ (the connection is that $X = \zeta\inv(-)$); that means negating the sign of every edge in the cut $\del X$.  Then we write $\Sigma^X = (\Gamma,\sigma^X)$ for the switched graph.  The switching function $\zeta_X$ is defined by $\zeta_X(v) := +$ if $v \notin X$ and $-$ if $v \in X$.

Switching functions multiply pointwise:  $(\zeta\eta)(v) = \zeta(v)\eta(v)$.  Multiplication corresponds to set sum (symmetric difference) of switching sets:  $\zeta_X \zeta_Y = \zeta_{X \oplus Y}$.  
The group of switching functions is $\signs^V$.  We write $\epsilon$ for its identity element, the all-positive switching function.  Certain switching functions have no effect on $\Sigma$; that is, the action of $\signs^V$ on a signature has a kernel,
$$
\fK_\Gamma := \{\zeta : \Sigma^\zeta = \Sigma\} = \{\zeta : \zeta \text{ is constant on each component of } \Gamma\}.
$$
The kernel is independent of the signature, in fact, of everything except the partition of $V$ into vertex sets of connected components of $\Gamma$.  The quotient group is the \emph{switching group} of $\Gamma$, written 
$$\Sw\Gamma := \signs^V/\fK_\Gamma.$$
The element of this group that corresponds to a switching function $\zeta$ is $\bar\zeta$, but for simplicity of notation, we often use the same symbol $\zeta$ without the bar when it should not cause confusion.

We say $\Sigma_1$ and $\Sigma_2$ are \emph{isomorphic} (written $\Sigma_1 \cong \Sigma_2$) if there is a graph isomorphism $\psi: \Gamma_1 \to \Gamma_2$ that preserves edge signs, i.e., $\sigma_2((vw)^\psi) = \sigma_1(vw)$ for every edge.  (As we are restricting to simple graphs, $\psi$ can be treated as a bijection $V_1 \to V_2$ and $(vw)^\psi = v^\psi w^\psi$.)  
They are \emph{switching isomorphic} (written $\Sigma_1 \simeq \Sigma_2$) if $\Sigma_2$ is isomorphic to a switching of $\Sigma_1$; that is, there are a graph isomorphism $\psi: \Gamma_1 \to \Gamma_2$ and a switching function $\zeta : V_1 \to \signs$ such that $\sigma_2((vw)^\psi) = \sigma_1^\zeta(vw)$ for every edge.  

\begin{lem}[\rm\cite{Soz, CSG}] \label{L:switching}
Switching preserves circle signs.  Conversely, if two signatures of $\Gamma$ have the same circle signs, then one is a switching of the other.
\end{lem}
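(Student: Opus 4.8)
The plan is to prove the two halves separately: the forward implication by a direct computation, and the converse by reducing to Harary's balance criterion (Proposition \ref{P:balance}).

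For the forward direction I would fix a switching function $\zeta: V \to \signs$ and an arbitrary circle $C = v_0 v_1 \cdots v_{k-1} v_0$, and compute
$$\sigma^\zeta(C) = \prod_{i=0}^{k-1} \zeta(v_i)\,\sigma(v_i v_{i+1})\,\zeta(v_{i+1}),$$
with indices read modulo $k$. The point is that each $\zeta(v_j)$ occurs in exactly two of the factors --- once in the factor indexed $j$ and once in the factor indexed $j-1$ --- so every $\zeta(v_j)$ appears squared; since $\zeta(v_j)^2 = +$, all the $\zeta$ terms cancel and $\sigma^\zeta(C) = \sigma(C)$. Switching a vertex set $X$ is the special case $\zeta = \zeta_X$, so this covers that formulation too.

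For the converse, suppose $\sigma_1$ and $\sigma_2$ are signatures of the same graph $\Gamma$ with $\sigma_1(C) = \sigma_2(C)$ for every circle $C$. I would introduce the pointwise product $\tau := \sigma_1\sigma_2$, which is again a signature of $\Gamma$. For every circle $C$ we get $\tau(C) = \sigma_1(C)\sigma_2(C) = \sigma_1(C)^2 = +$, so $(\Gamma,\tau)$ is balanced. Apply Proposition \ref{P:balance}: there is a division $V = X \cupdot X^c$ so that every $\tau$-positive edge lies inside one of the two parts and every $\tau$-negative edge runs between them. Let $\zeta = \zeta_X$ be the associated switching function. If $vw$ lies inside a part, then $\tau(vw) = +$ forces $\sigma_2(vw) = \sigma_1(vw)$ while $\zeta(v)\zeta(w) = +$, so $\sigma_1^\zeta(vw) = \zeta(v)\sigma_1(vw)\zeta(w) = \sigma_2(vw)$; if $vw$ runs between the parts, then $\tau(vw) = -$ forces $\sigma_2(vw) = -\sigma_1(vw)$ while $\zeta(v)\zeta(w) = -$, so again $\sigma_1^\zeta(vw) = \sigma_2(vw)$. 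Hence $\sigma_2 = \sigma_1^\zeta$, i.e., one signature is a switching of the other.

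I expect no serious obstacle: the lemma is foundational and both directions are short. The only point needing a little care is that the difference signature $\tau$ is to be handled over all components of $\Gamma$ simultaneously, but that is precisely what Proposition \ref{P:balance} delivers, since its bipartition statement does not presuppose connectedness (one may divide each component independently and take the union). If one preferred not to cite Proposition \ref{P:balance}, the switching function could be built directly: fix a spanning forest of $\Gamma$, choose $\zeta$ arbitrarily on one vertex per component, and propagate $\zeta$ along forest edges so as to kill $\tau$ on them; balance of $\tau$ is exactly the condition guaranteeing that the resulting $\zeta$ also kills $\tau$ on the non-forest edges.
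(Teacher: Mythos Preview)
Your proof is correct; both directions are argued cleanly and completely. The paper itself does not supply a proof of this lemma but only cites the references \cite{Soz, CSG}, so there is nothing to compare against beyond noting that your argument is the standard one and that your alternative spanning-forest construction is equally valid.
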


For instance, $\Sigma$ is balanced if and only if it is switching equivalent to the all-positive signature.  Because of this lemma, switching-equivalent signed graphs are in most ways the same.  

Lemma \ref{L:switching} shows that switching isomorphism is a true isomorphism: not of graphs or signed graphs, but of the structure on signed graphs consisting of the underlying graph and the class of positive circles, i.e., of the pair $(|\Sigma|,\cC^+(\Sigma))$ (which constitutes a type of `biased graph' \cite{BG1}).

Switching equivalence and switching isomorphism are equivalence relations on signed graphs.
An equivalence class under switching equivalence is a \emph{switching equivalence class} of signed graphs.  An equivalence class under switching isomorphism is a \emph{switching isomorphism class}.  (Many writers say `switching equivalence' when they mean `switching isomorphism', but I find it better to separate the two concepts.)

%%====================================
\section{Switching Isomorphism Types}\label{swisom}

The most patently obvious signatures of the Petersen graph are $+P$ and $-P$.  Two more are $P_1$, which has only one negative edge, and its negative $-P_1$, with only one positive edge.  Two more signatures are $P_{2,d}$ where $d=2,3$, which have two negative edges at distance $d$; and the last two that mainly concern us are $P_{3,d}$ for $d=2,3$, which have three negative edges, all at distance $d$; in $P_{3,2}$ the negative edges must be alternate edges of a hexagon.  In terms of our classification of matchings, $P_{k,d} := P_{M_{k,d}}$, that is, $E^-(P_{k,d}) = M_{k,d}$.  These signed graphs are illustrated in Figure \ref{F:types}.  

\begin{thm}\label{T:types}
There are exactly six signed Petersen graphs up to switching isomorphism.  They are $+P \simeq -P_{3,3}$, $P_1 \simeq -P_{2,3}$, $P_{2,2} \simeq -P_{2,2}$, $P_{2,3} \simeq -P_1$, $P_{3,2} \simeq -P_{3,2}$, and $P_{3,3} \simeq -P$.
\end{thm}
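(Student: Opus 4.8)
The plan is to prove the theorem in two halves: first that every signature of $P$ is switching isomorphic to one of the six displayed signed graphs, so that there are \emph{at most} six switching isomorphism classes, and then that these six are pairwise non--switching-isomorphic.

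For the upper bound the decisive step is a reduction to matchings: \emph{every signature of $P$ is switching equivalent to one whose negative edge set is a matching.} Indeed, if $\Sigma=(P,\sigma)$ has a vertex $v$ incident to $k\ge 2$ negative edges, then, $P$ being $3$-regular, the cut $\del\{v\}$ has three edges, and switching $\Sigma$ at $\{v\}$ replaces those $k$ negative edges by the $3-k<k$ formerly positive edges at $v$ and alters nothing else, so $|E^-|$ strictly decreases. Iterating, we reach a signature in which no vertex meets two negative edges, that is, whose negative set $M$ is a matching; hence $\Sigma$ is switching equivalent, a fortiori switching isomorphic, to $P_M$.

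Next I would invoke the classification of matchings of $P$ up to $\Aut P$: by Lemma~\ref{L:m3types}, together with the treatment of $1$-, $2$-, $4$-, and $5$-edge matchings in Section~\ref{matchings}, there are exactly eleven automorphism types --- the empty matching, $M_1$, $M_{2,2}$, $M_{2,3}$, $M_{3,3}$, $M_{3,2}$, $M_3'$, $M_{3,2/3}$, $M_4$ (a perfect matching less an edge), $M_4'$, and the perfect matching $M_5$ --- and automorphic matchings give isomorphic signed Petersen graphs, so there are at most eleven switching isomorphism classes. The real work is to collapse these eleven to six. By Lemma~\ref{L:switching}, $P_M\sim P_{M'}$ exactly when $M\oplus M'$ is a cut of $P$, so the task reduces to producing suitable cuts; the useful ones are $M_5$ itself (the cut between the two pentagons it separates), the cuts $\del X_m$ of maximum independent sets (note $E=M_{3(m)}\sqcup\del X_m$), the hexagon-complements $\del N(v)=E\setm H_v$, and the small cuts $\del\{v\}$ and $\del\{u,w\}$ with $uw\in E$. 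Several identifications are then immediate: $P_{M_5}\simeq +P$ since $M_5$ is a cut; $P_{M_4}\simeq P_1$ since $M_5$ is a cut and $M_4\oplus M_5$ is a single edge; $P_{M_{3,3}}\simeq P_E=-P$ since $M_{3(m)}\oplus\del X_m=E$, giving the stated equivalence $P_{3,3}\simeq -P$; $-P_{3,3}\simeq +P$ because its negative set $E\setm M_{3(m)}=\del X_m$ is a cut; $-P_{2,3}\simeq P_1$ by switching at $X_m$, which strips $\del X_m$ off the negative set $\del X_m\cup\{e_0\}$; and $-P_1\simeq P_{2,3}$, since switching $-P_1$ at $X_m$ (for the unique $m$ with the lone positive edge in $M_{3(m)}$) leaves $P_{M_{3(m)}\setm\{e\}}$, a matching of type $M_{2,3}$. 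For $P_{3,2}\simeq -P_{3,2}$ I would combine two facts: an element of the hexagon stabilizer $\fS_{\{l,m\}}\times\fS_{\5\setm\{l,m\}}$ carries $M_{3,2}$ to the complementary triple $M^\ast$ of alternate edges of $H_{lm}$ (so $P_{M_{3,2}}\cong P_{M^\ast}$), and $(E\setm M_{3,2})\oplus M^\ast=E\setm H_{lm}=\del N(v_{lm})$ is a cut (so $P_{M^\ast}\sim -P_{3,2}$). What remains --- and this is where I expect the main obstacle --- is the bookkeeping needed to show the three stragglers $M_3'$, $M_{3,2/3}$, $M_4'$ each reduce, under a well-chosen one of the above cuts, to a $2$-edge matching whose type ($M_{2,2}$ or $M_{2,3}$) is then read off from which endpoints are adjacent, and likewise the one genuinely delicate displayed equivalence $P_{2,2}\simeq -P_{2,2}$, which calls for a less obvious switching. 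Each such verification is routine, but there are several, and one must keep the labels $v_{ij}$ straight and check that each claimed $M\oplus M'$ really is a cut. Granting these, every signature of $P$ is switching isomorphic to one of $+P$, $P_1$, $P_{2,2}$, $P_{2,3}$, $P_{3,2}$, $P_{3,3}$.

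For the lower bound I would separate the six by switching invariants. The signed graph $+P$ is balanced and is the only balanced class; $P_{3,3}\simeq -P$ is antibalanced, whereas $-P_{3,2}$ has negative set $E\setm M_{3,2}$, of size $12$, and the only $12$-edge cuts of $P$ are the $\del X_m$, while $M_{3,2}$ --- having pairwise distance $2$ --- is not any $M_{3(m)}$, so $P_{3,2}$ is not antibalanced, and no other class is balanced or antibalanced. The remaining classes $P_1,P_{2,2},P_{2,3},P_{3,2}$ have frustration indices $1,2,2,3$ (Theorem~\ref{T:fr}), and frustration index is a switching invariant, which isolates $P_1$ and $P_{3,2}$ and leaves only the pair $P_{2,2},P_{2,3}$. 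If these were switching isomorphic, some $\Aut P$-image $g(M_{2,2})$ would have $g(M_{2,2})\oplus M_{2,3}$ a cut of even size $0$, $2$, or $4$; size $0$ is impossible because the two matchings have different distance patterns, size $2$ is impossible because $P$ has edge-connectivity $3$, and size $4$ would make the disjoint union $g(M_{2,2})\cup M_{2,3}$ a $4$-edge cut that is a matching --- but every $4$-edge cut of $P$ is $\del\{u,w\}$ for some edge $uw$ and hence contains two edges at $u$. So $P_{2,2}\not\simeq P_{2,3}$. (The chromatic numbers of Theorem~\ref{T:col} confirm both separations independently.) Since the six classes exhaust all signatures of $P$ and are pairwise inequivalent, the theorem follows.
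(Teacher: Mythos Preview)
Your overall strategy is correct and closely parallels the paper's. The upper bound---reduce to matchings by switching vertices of negative degree $\ge 2$, then collapse the eleven matching types to six---is exactly how the paper proceeds, although the paper packages the collapse inside the proof of Theorem~\ref{T:fr} rather than via your clean criterion ``$P_M\sim P_{M'}$ iff $M\oplus M'$ is a cut.'' The cases you flag as ``routine bookkeeping'' ($M_3'$, $M_{3,2/3}$, $M_4'$, and the delicate $P_{2,2}\simeq -P_{2,2}$) are precisely the ones the paper works out by explicit switchings; you are right that they are straightforward once one commits to the vertex labels, and the paper's choices would serve as the missing verifications.

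Your lower bound departs from the paper in one place worth noting. The paper separates the six classes entirely by numerical invariants---any two of $c_5^-$, $c_6^-$, $l$ suffice (Theorem~\ref{T:circ} and Table~\ref{Tb:circ})---and in particular distinguishes $P_{2,2}$ from $P_{2,3}$ by $c_5^-=6$ versus $8$. Your argument for this pair via the structure of $4$-edge cuts (every $4$-edge cut of $P$ is $\del\{u,w\}$ for an edge $uw$, hence never a matching) is a genuinely different and self-contained route that avoids counting circles; it is correct, and the cut classification you use is exactly what appears in the proof of Lemma~\ref{L:cutswitch}.

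One correction: your parenthetical that ``the chromatic numbers of Theorem~\ref{T:col} confirm both separations independently'' is wrong. Table~\ref{Tb:col} shows $\chi=1$ for all six classes and $\chi^*=2$ for all but $P_{3,3}$, so the chromatic numbers distinguish only $P_{3,3}$ from the rest and say nothing about $P_1$, $P_{2,2}$, $P_{2,3}$, $P_{3,2}$. You may have had in mind the values $\chi_{(P,\sigma)}(3)$ of Theorem~\ref{P:chromatic}, which do separate all six; if so, cite that instead, or simply drop the remark since your structural argument already suffices.
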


\begin{proof}
The first step is to establish the switching equivalences stated in the theorem.  
To switch $-P$ to $P_{3,3}$, switch an independent set $X=X_m$ of four vertices; this negates $\del X$ leaving three negative edges, which have distance 3.  
If we begin with $-P_1$ with positive edge $uv$, by choosing $X$ to contain neither $u$ nor $v$ we get $uv \notin \del X$ so, after switching, $uv$ retains its sign; therefore $(-P_1)^X = P_{2,3}$.  
To switch $-P_{3,2}$, where the positive edges belong to a hexagon $H_v$, switch $N[v]$.  That negates all edges except those of $H_v$, giving $P_{3,2}$ whose negative edges are the originally negative edges of the hexagon.  
To switch $-P_{2,2}$, note that the two positive edges $e$ and $f$, having distance 2, lie in a unique pentagon $J$.  Switch the three vertices of $J$ that are not incident to $e$ and the two vertices outside $J$ that are adjacent to $e$.  The result is $P_{2,2}$.

For the rest of the theorem we need two more steps.  
First, we must prove that every signed Petersen graph belongs to the switching isomorphism class of one of the six types $+P,\, P_1,\, P_{k,d}$ listed in the theorem.  That is implied by Theorem \ref{T:fr}.  
Second, we must show that none of the six types is switching isomorphic to any other.  The second step follows from the calculation of \emph{invariants} of the six switching isomorphism classes, by which we mean numbers or other objects that are the same for every element of a switching isomorphism class.  Relevant invariants are the numbers $c_5^-$ and $c_6^-$ of negative circles of lengths 5 and 6 (Theorem \ref{T:circ}), the frustration index $l$ (Theorem \ref{T:fr}, and the switching automorphism groups (Theorem \ref{T:aut}).  The six classes must be distinct because no two have all the same invariants.  
In fact, any two of $c_5^-$, $c_6^-$, and $l$ suffice to distinguish them; and the switching automorphism groups, though more difficult to find, suffice by themselves.
\end{proof}

%%====================================
\section{Circle Signs}\label{circ}

Lemma \ref{L:switching} leads to an effective method of distinguishing switching isomorphism classes, by comparing the numbers of negative circles of each length.  

\begin{thm}\label{T:circ}
The numbers of negative pentagons and hexagons in each of the six signed Petersen graphs of Theorem \ref{T:types} are those listed in Table \ref{Tb:circ}.
\end{thm}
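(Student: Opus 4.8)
The plan is to compute, for each of the six minimal signatures, the number of negative pentagons and negative hexagons directly from the structure of the Petersen graph set up in Section~\ref{structure}. Since the six signatures are given explicitly by their negative edge sets (namely $\eset$, a single edge, an $M_{2,2}$, an $M_{2,3}$, an $M_{3,2}$, and an $M_{3,3}$), and a circle is negative precisely when it contains an odd number of negative edges, the whole computation reduces to counting, for each type, how many of the $12$ pentagons and $10$ hexagons meet the negative edge set in an odd number of edges. By Lemma~\ref{L:switching} these counts are switching invariants, so it suffices to do the count for one representative of each class.

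First I would record the two enumerative facts I need about circles in $P$: there are exactly $12$ pentagons (five-cycles) and, by the hexagon/vertex correspondence in Section~\ref{hex}, exactly $10$ hexagons, one for each vertex. Then for the all-positive signature $+P$ both counts are $0$, and for $-P\simeq P_{3,3}$ every pentagon is negative (odd length) and every hexagon is positive, giving $c_5^-=12$, $c_6^-=0$; alternatively one can verify this via the $M_{3,3}$ representative, since $M_{3(m)}=E(P\setm X_m)$ meets each circle in a controlled way. For the single negative edge $P_1$: a negative edge lies on a fixed number of pentagons and a fixed number of hexagons (each edge is on the same number by vertex/edge transitivity of $\Aut P$ on the relevant structures), and those numbers are $4$ pentagons and $4$ hexagons through a given edge; hence $c_5^-=4$, $c_6^-=4$.

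For the three two- and three-negative-edge types the count is a short case analysis on how a pentagon or hexagon can intersect a $2$- or $3$-edge matching of the prescribed distance type. Here I would lean on the matching structure in Section~\ref{matchings} and its interaction with circles: for instance, an $M_{3,2}$ consists of alternate edges of its principal hexagon $H_{lm}$, so that hexagon meets $M_{3,2}$ in three (odd) edges and is negative, and one then tabulates the parity of $|C\cap M_{3,2}|$ for the remaining nine hexagons and the twelve pentagons using the embeddings of matchings in pentagons and hexagons already described (e.g.\ that an $M_{2,2}$ lies in a unique hexagon, an $M_{2,3}$ in a unique $M_{3,3}$, etc.). The same style of bookkeeping handles $P_{2,2}$, $P_{2,3}$, and $P_{3,3}$. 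The values obtained are assembled into Table~\ref{Tb:circ}.

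The main obstacle is purely organizational rather than conceptual: making the case analysis of circle--matching intersections airtight without simply listing all $22$ circles by coordinates. The cleanest route is to exploit symmetry to reduce the number of orbits of (circle, negative-edge-set) incidence patterns one must check, and to use double counting — e.g.\ summing $|C\cap E^-|$ over all pentagons equals $\sum_{e\in E^-}(\text{number of pentagons through }e)$ — as an independent check on each entry. I expect each individual count to be routine once the orbit structure of Section~\ref{structure} is invoked, so the proof will mostly consist of stating the representative, citing the relevant structural fact, and reading off the parities, with the summation identities serving as verification.
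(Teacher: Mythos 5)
Your overall strategy is the right one and is the same parity principle the paper uses: a circle is negative iff it meets $E^-$ in an odd number of edges, so the theorem reduces to counting, among the $12$ pentagons and $10$ hexagons, those with odd intersection against each representative negative edge set. Your counts for $+P$, $P_{3,3}\simeq -P$, and $P_1$ (four pentagons and four hexagons through each edge) are correct and match the paper.

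The gap is that for $P_{2,2}$, $P_{2,3}$, $P_{3,2}$, and (beyond the negation argument) the cross-checks, you describe a bookkeeping plan but never execute it, and the executed counts are exactly where the content of the theorem lives. The paper finishes these cases with a short inclusion--exclusion rather than an orbit-by-orbit tabulation: for two negative edges $e,f$ the number of negative $k$-gons is $2\bigl(c_k^-(P_1)-d_k\bigr)$, where $d_k$ is the number of $k$-gons containing both $e$ and $f$; one then only needs $d_5=d_6=1$ at distance $2$, and $d_5=0$, $d_6=2$ at distance $3$. The last of these is the one genuinely non-obvious count (the paper itself flags it: of the four length-$3$ paths extending $e$, only two complete to a hexagon through $f$), and your proposal gives no indication of how you would obtain it or what value it takes; ``tabulate the parities using the embeddings of matchings in circles'' is not yet an argument. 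The paper also halves the work with the negation identities $c_6^-(P,-\sigma)=c_6^-(P,\sigma)$ and $c_5^-(P,-\sigma)=12-c_5^-(P,\sigma)$, which you use only implicitly for $-P$; note in particular that $P_{3,2}\simeq -P_{3,2}$ forces $c_5^-(P_{3,2})=6$ with no counting at all, whereas the hexagon entry $c_6^-(P_{3,2})=10$ still requires an actual computation that neither your sketch nor, strictly speaking, the paper's written proof carries out. To turn your proposal into a proof you must supply the intersection counts $d_5$, $d_6$ (or the equivalent parity tables) explicitly; your double-counting identity $\sum_C|C\cap E^-|=\sum_{e\in E^-}(\text{number of }k\text{-gons on }e)$ is a useful consistency check but cannot by itself determine the parities, since it only constrains the sum and not the distribution of intersection sizes.
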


\begin{table}[htb]
\begin{center}
\begin{tabular}{|c||c|c|c|c|c|c|}
\hline
$(P,\sigma)$ \vstrut{15pt}&\hbox to 3em{\hfill$+P$\hfill} &\hbox to 3em{\hfill $P_1$\hfill} &\hbox to 3em{\hfill $P_{2,2}$\hfill} &{$P_{2,3} \simeq -P_1$} &\hbox to 3em{\hfill $P_{3,2}$\hfill} &{$P_{3,3} \simeq -P$}	\\[3pt]
\hline%\hline
\vstrut{15pt}Negative $C_5$'s	&0	&4	&6	&8	&6	&12	\\
\vstrut{15pt}Negative $C_6$'s	&0	&4	&6	&4	&10	&0	\\[2pt]
\hline
\end{tabular}
\end{center}
\bigskip
\caption{The numbers of negative pentagons and hexagons in each switching isomorphism type.}
\label{Tb:circ}
\end{table}

\begin{proof}
The Petersen graph has $c_5=12$ pentagons and $c_6=10$ hexagons.  The number of cases to consider is lessened if we notice that negating $(P,\sigma)$ leaves the number $c_6^-(P,\sigma)$ of negative hexagons the same but complements the number $c_5^-(P,\sigma)$ of negative pentagons to $c_5^-(P,-\sigma) = 12-c_5^-(P,\sigma)$.

For $+P$ both numbers are 0, and the values for $-P$ follow.

In $P_1$ there are as many negative pentagons, or hexagons, as the number of each that lie on a fixed edge $e$.  There are four ways to add an edge at each end of $e$ to get a path of length 3, and each such path completes uniquely to a pentagon or hexagon.  Thus, $c_5^-(P_1)=c_6^-(P_1) =4$.  The numbers for $-P_1$ are immediate.

If we now take an edge $f$ at distance 2 from $e$, the number of negative $k$-gons equals $2(c_k^-(P_1) - d_k)$ where $d_k$ is the number of $k$-gons that contain both $e$ and $f$.  It is easy to see that $d_5=d_6=1$.  (Use the 3-path transitivity of $P$, by which under the symmetries of $P$ there is only one orbit of pairs of edges at distance 2.)  It follows that $c_5^-(P_{2,2})=c_6^-(P_{2,2})=6$.

For an $f$ at distance 3 from $e$ there is a similar calculation.  However, $f$ cannot lie in a common pentagon with $e$, so now $d_5=0$.  The value of $d_6$ is not quite obvious.  There are four ways to form a path of length 3 by extending $e$ at each end.  Inspection reveals that two of these paths cannot be completed to a hexagon on $f$, but the other two can be completed uniquely.  Thus, $d_6=2$.  We conclude that $c_5^-(P_{2,3})=8$ and $c_6^-(P_{2,3})=4$.
\end{proof}

%%====================================
\section{Frustration}\label{fr}

The proofs of Theorems \ref{T:types} and \ref{P:chromatic} make use of the measurement of imbalance by edges or vertices.  The \emph{frustration index} $l(\Sigma) :=$ the smallest number of edges whose deletion leaves a balanced signed graph.  It is equivalent to finding the largest number of edges in a balanced subgraph of $\Sigma$, which is the signed-graph equivalent of the maximum cut problem in an unsigned graph; in fact, $l(-\Gamma) =$ the smallest number of edges whose complement is bipartite.  The \emph{frustration number} (or \emph{vertex frustration number}) $l_0(\Sigma)$ is the smallest number of vertices whose deletion leaves a balanced signed graph.  Its complement, $|V| - l_0$, is the largest order of a balanced subgraph.  For an all-negative graph, $l_0(-\Gamma)$ is the smallest number of vertices whose deletion leaves a bipartite graph.

%%=================
\subsection{Frustration index}\label{frindex}

The frustration index is the most significant way to measure how unbalanced a signed graph is.  For instance, in social psychology $l(\Sigma)$ is the minimum number of relations that must change to achieve balance.  In the non-ferromagnetic Ising model of spin glass theory the frustration index determines the ground state energy of the spin glass.
(Frustration index was called `complexity' by Abelson and Rosenberg \cite{PsL}, who introduced the idea, and `line index of balance' by Harary; my name for it was inspired by the picturesque terminology of Toulouse \cite{Toulouse}.)

Harary \cite{MSB} proved that $l(\Sigma) =$ the smallest number of edges whose negation or deletion makes the signed graph balanced.  (Negating an edge is equivalent to deleting it, so one can delete or negate the edges in any combination.)  
An edge set whose deletion leaves a balanced graph is called a \emph{balancing set} (of edges); thus, $l(\Sigma) =$ the size of a minimum balancing set.

\begin{lem}[implicit in {\cite{BMRU}}]\label{L:swfr}
Switching does not change $l(\Sigma)$.  Indeed, $l(\Sigma) = \min_\zeta |E^-(\Sigma^\zeta)|$, the minimum number of negative edges in a switching of $\Sigma$.
\end{lem}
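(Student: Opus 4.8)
The plan is to prove both assertions of Lemma~\ref{L:swfr} together, since the second implies the first. The key observation is that deleting edges and switching interact cleanly: for any vertex set $X$ and any edge set $S$, switching $\Sigma \setm S$ by $X$ gives the same signed graph as $(\Sigma^X) \setm S$, because switching only changes signs of edges in $\del X$ and this is unaffected by whether we have first removed the edges of $S$. In particular, by Proposition~\ref{P:balance} (equivalently by Lemma~\ref{L:switching}), a signed graph is balanced if and only if every switching of it is balanced, since switching preserves circle signs. So ``$S$ is a balancing set of $\Sigma$'' is equivalent to ``$S$ is a balancing set of $\Sigma^\zeta$'' for every switching function $\zeta$: the underlying graphs agree, and $\Sigma \setm S$ is balanced iff $(\Sigma \setm S)^X = \Sigma^X \setm S$ is balanced. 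This shows at once that $l(\Sigma) = l(\Sigma^\zeta)$ for every $\zeta$, which is the first sentence.

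For the formula $l(\Sigma) = \min_\zeta |E^-(\Sigma^\zeta)|$, I would argue by two inequalities. For ``$\le$'': given any switching function $\zeta$, the negative edges $E^-(\Sigma^\zeta)$ form a balancing set of $\Sigma^\zeta$ (deleting all negative edges certainly leaves an all-positive, hence balanced, graph), so $l(\Sigma^\zeta) \le |E^-(\Sigma^\zeta)|$; combining with $l(\Sigma) = l(\Sigma^\zeta)$ from the previous paragraph and taking the minimum over $\zeta$ gives $l(\Sigma) \le \min_\zeta |E^-(\Sigma^\zeta)|$. For ``$\ge$'': let $S$ be a minimum balancing set of $\Sigma$, so $|S| = l(\Sigma)$ and $\Sigma \setm S$ is balanced. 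By Lemma~\ref{L:switching} (or Proposition~\ref{P:balance}), $\Sigma \setm S$ is switching equivalent to its all-positive signature, so there is a switching function $\zeta$ on $V$ with $(\Sigma \setm S)^\zeta$ all positive, i.e.\ every edge of $\Sigma^\zeta$ outside $S$ is positive. Hence $E^-(\Sigma^\zeta) \subseteq S$, so $|E^-(\Sigma^\zeta)| \le |S| = l(\Sigma)$, and therefore $\min_\zeta |E^-(\Sigma^\zeta)| \le l(\Sigma)$. The two inequalities give equality.

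The only mildly delicate point, and the one I would state carefully, is the commutation $(\Sigma \setm S)^\zeta = \Sigma^\zeta \setm S$ together with the fact that a switching function defined on the vertex set of the (still spanning) subgraph $\Sigma \setm S$ is literally the same data as a switching function on $V = V(\Sigma)$ --- since deleting edges does not delete vertices, there is no ambiguity here, but it is worth one sentence. Everything else is a direct unwinding of definitions plus Lemma~\ref{L:switching}. I do not expect any real obstacle; the lemma is essentially bookkeeping, which is presumably why the paper attributes it as ``implicit in'' a reference rather than giving it a name.
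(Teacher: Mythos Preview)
Your proof is correct. The paper does not actually supply a proof of this lemma---it is stated with the attribution ``implicit in \cite{BMRU}'' and left unproved---so there is nothing in the paper to compare your argument against. Your two-inequality argument, using the commutation $(\Sigma\setminus S)^\zeta = \Sigma^\zeta\setminus S$ together with Lemma~\ref{L:switching}, is the natural way to fill in the details and would serve well as the omitted proof.
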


That is, a signed graph has the smallest number of negative edges in its switching equivalence class if and only if $|E^-(\Sigma)| = l(\Sigma)$.  Let us call $\Sigma$ \emph{minimal} if it satisfies this equation.

By Lemma \ref{L:swfr} we can distinguish switching isomorphism classes by their having different frustration indices.  This helps to prove the six signed $P$'s are not switching isomorphic.

\begin{thm}\label{T:fr}
There are precisely the following six isomorphism types of minimal signed Petersen graph: $+P$, $P_1$, $P_{2,2}$, $P_{3,2}$, $P_{2,3}$, and $P_{3,3}$.  Each is the unique minimal isomorphism type in its switching isomorphism class.  
The frustration indices of the six types are as stated in Table \ref{Tb:fr}.  
\end{thm}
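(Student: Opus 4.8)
\textit{Proof proposal.} The plan is to establish, in order: the frustration indices of Table~\ref{Tb:fr}; that the six listed signatures are exactly the minimal isomorphism types; and that each switching isomorphism class contains just one of them. The upper bounds are immediate: $+P$ is balanced, and for the other five signatures $E^-$ is itself a balancing set, so $l(P_1)\le1$, $l(P_{2,2}),l(P_{2,3})\le2$, and $l(P_{3,2}),l(P_{3,3})\le3$. For the lower bounds I would combine Theorem~\ref{T:circ} with a covering estimate: a balancing set must meet every negative circle, and each edge of $P$ lies in four pentagons and four hexagons (counting incidences, $5\cdot12=4\cdot15=6\cdot10$). Hence $P_1$ has a negative pentagon so $l(P_1)\ge1$; $P_{2,2}$ and $P_{2,3}$, with six and eight negative pentagons, force $l\ge2$; and $P_{3,2}$, with ten negative hexagons, and $P_{3,3}$, with twelve negative pentagons, force $l\ge\lceil10/4\rceil=3$. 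This proves Table~\ref{Tb:fr} and shows all six signatures are minimal.

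To see that these are the only minimal types I would first show that in any minimal signature $E^-$ is a matching: by Lemma~\ref{L:swfr} a minimal $\Sigma$ has the fewest negative edges in its switching class, but if some vertex $v$ had $\deg^-(v)\ge2$, then switching $v$ negates exactly the three edges at $v$, replacing at least two negative edges by at most one, and so strictly decreases $|E^-|$. By the matching classification of Section~\ref{matchings} and Lemma~\ref{L:m3types}, $E^-$ is then (up to automorphism) one of $\eset$, $M_1$, $M_{2,2}$, $M_{2,3}$, $M_{3,3}$, $M_{3,2}$, $M_3'$, $M_{3,2/3}$, $M_4$, $M_4'$, $M_5$. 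The last five I would eliminate by exhibiting in each case a switching that lowers the number of negative edges: $M_5$ is a pentagon cut, so $P_{M_5}$ is balanced by Proposition~\ref{P:balance}; for $M_4=M_5\setm e$, switching one pentagon side of the cut $M_5$ leaves only $e$ negative; for $M_3'=M_5\setm\{e_1,e_2\}$ the same switch leaves only $e_1,e_2$ negative; and for $M_{3,2/3}$ and $M_4'$ one locates a five-edge ``path cut'' $\del\{u,v,w\}$ with $u$--$v$--$w$ a path in $P$ that contains three of the relevant negative edges, so switching that side trades three negative edges for two. Since each of $\eset$, $M_1$, $M_{2,2}$, $M_{2,3}$, $M_{3,2}$, $M_{3,3}$ forms a single automorphism orbit of $P$, the minimal isomorphism types are precisely $+P$, $P_1$, $P_{2,2}$, $P_{2,3}$, $P_{3,2}$, $P_{3,3}$, and these are pairwise non-isomorphic.

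Finally, every signed Petersen graph is switching equivalent to a minimal one (Lemma~\ref{L:swfr}), which by the previous paragraph is isomorphic to one of the six; thus each switching isomorphism class contains one of them. No class can contain two, for by Lemma~\ref{L:switching} and Theorem~\ref{T:circ} switching-isomorphic signatures have the same numbers of negative pentagons and hexagons, while the six pairs of values in Table~\ref{Tb:circ} are pairwise distinct. I expect the main obstacle to be the elimination of $M_{3,2/3}$ and $M_4'$: neither matching extends to a perfect matching of $P$, so a pentagon cut will not suffice, and one must search the neighborhood and hexagon data of Section~\ref{structure} for a suitable five-edge cut meeting the matching in three of its edges — a finite but slightly delicate check.
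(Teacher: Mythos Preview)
Your proposal is correct, and the overall architecture---reduce to matchings, eliminate the five surplus matching types by explicit switchings, then separate the six survivors by circle counts---matches the paper's proof closely. The genuine difference is in how minimality of the six basic types (equivalently, the lower bounds in Table~\ref{Tb:fr}) is established. The paper does not prove lower bounds directly; instead it argues that once every signature has been shown to switch to one of the six and the six are pairwise switching-nonisomorphic (via $c_5^-, c_6^-$), each must be the unique minimal in its class, whence $l=|E^-|$ follows for free. You instead give a direct covering argument: a balancing set must meet every negative circle, and since each edge of $P$ lies in exactly four pentagons and four hexagons, $l\ge\lceil c_5^-/4\rceil$ and $l\ge\lceil c_6^-/4\rceil$, which with Table~\ref{Tb:circ} yields the exact values. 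This is a cleaner and more informative route to Table~\ref{Tb:fr}, and it decouples minimality from the classification, at the small cost of needing Theorem~\ref{T:circ} as an input rather than only as a separator. On the elimination of $M_4'$: the paper switches a $4$-vertex path (a $6$-edge cut) and lands directly on $P_{2,2}$, whereas you propose a $3$-vertex path cut with three negative edges; such a path does exist (e.g., $\{v_{ij},v_{kl},v_{jm}\}$ with $M_4'=\{v_{ij}v_{lm},v_{il}v_{jm},v_{kl}v_{im},v_{jl}v_{km}\}$), so your approach works, though the search you anticipate is indeed a little fiddly. Your vertex-switching argument that $E^-$ is a matching is the special case of the paper's Lemma~\ref{L:cutfr} applied to the cuts $\del\{v\}$; the paper's version is slightly more general but not needed here since $P$ is cubic.
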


To find the frustration index of any signature of $P$, switch it to be minimal and consult the table.  As frustration index is an NP-complete problem (its restriction to all-negative signatures is equivalent to the well known NP-complete maximum-cut problem) that may not be so easy, but in small examples like the Petersen graph Lemma \ref{L:cutfr} is a great help.
\begin{table}[htb]
\begin{center}
\begin{tabular}{|r||c|c|c|c|c|c|}
\hline
\vstrut{15pt}$(P,\sigma)$	&\hbox to 2em{\,$+P$} &\hbox to 2em{\;\,$P_1$} &\hbox to 2em{\,$P_{2,2}$} &\hbox to 2em{\,$P_{2,3}$} &\hbox to 2em{\,$P_{3,2}$}	&\hbox to 2em{\,$P_{3,3}$}	\\[3pt]
\hline
\vstrut{15pt}$l(P,\sigma)$	&0	&1	&2	&2	&3	&3	\\[2pt]
\hline
\end{tabular}
\end{center}
\bigskip
\caption{The frustration index of each switching isomorphism type.}
\label{Tb:fr}
\end{table}

\begin{proof}
First we show that every signature of $P$ switches to one of the six.

\begin{lem}\label{L:cutfr}
If every cut in $\Sigma$ has at least as many positive as negative edges, then $l(\Sigma) = |E^-|$.  
If some cut has more negative than positive edges, then $l(\Sigma) < |E^-|$.  
\end{lem}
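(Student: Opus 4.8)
The plan is to deduce both statements directly from Lemma \ref{L:swfr}, which gives $l(\Sigma) = \min_\zeta |E^-(\Sigma^\zeta)|$, combined with an elementary accounting of how the number of negative edges changes under a single switching. First I would observe that every switching of $\Sigma$ has the form $\Sigma^X$ for some vertex set $X \subseteq V$ (take $X = \zeta\inv(-)$), and that switching $X$ negates precisely the edges of the cut $\del X$. Consequently an edge lying in $\del X$ reverses its contribution to the count of negative edges while every other edge keeps its sign, so
$$|E^-(\Sigma^X)| = |E^-| - |E^- \cap \del X| + |E^+ \cap \del X|,$$
and therefore
$$|E^-(\Sigma^X)| - |E^-| = |E^+ \cap \del X| - |E^- \cap \del X|.$$

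For the first statement, the hypothesis is exactly that $|E^+ \cap \del X| \ge |E^- \cap \del X|$ for every $X$, so the right-hand side of the displayed difference is $\ge 0$ for all $X$. Hence $|E^-|$ is the minimum of $|E^-(\Sigma^\zeta)|$ over all switchings (it is attained, trivially, at $X = \eset$), and Lemma \ref{L:swfr} gives $l(\Sigma) = |E^-|$. For the second statement, if $\del X$ is a cut with $|E^- \cap \del X| > |E^+ \cap \del X|$, then the displayed difference is strictly negative, so $|E^-(\Sigma^X)| < |E^-|$, and thus $l(\Sigma) \le |E^-(\Sigma^X)| < |E^-|$, again by Lemma \ref{L:swfr}.

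Once the sign-change identity is in hand the argument is a one-line consequence of Lemma \ref{L:swfr}, so no real obstacle is expected; the only points needing a word of care are the reduction from arbitrary switching functions to switchings of vertex sets, the correct bookkeeping of which edges flip sign, and the (harmless) inclusion of the empty cut, which is what makes $X = \eset$ legitimate and so yields the equality rather than merely an inequality in the first statement.
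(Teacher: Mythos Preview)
Your proposal is correct and follows essentially the same approach as the paper's proof: both argue that switching $X$ changes $|E^-|$ by exactly $|E^+\cap\del X|-|E^-\cap\del X|$ and then invoke Lemma~\ref{L:swfr}. The paper's version is simply terser, omitting the explicit sign-change identity and the remark about reducing arbitrary switching functions to vertex sets.
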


\begin{proof}
If $|E^-(X,X^c)| > |E^+(X,X^c)|$, then switching $X$ reduces the number of negative edges.  If $|E^-(X,X^c)| \leq |E^+(X,X^c)|$ for every $X$, then no switching can reduce the number of negative edges; so $l(\Sigma) = |E^-|$ by Lemma \ref{L:swfr}.
\end{proof}

Lemma \ref{L:cutfr} has a pleasing effect on a cubic graph.

\begin{cor}\label{C:cubicfr}
In any minimal signature of a cubic graph the negative edges are a matching.
\end{cor}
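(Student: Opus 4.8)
The plan is to derive this directly from Lemma \ref{L:cutfr} applied to the small cuts around a single vertex. Suppose $\Sigma$ is a minimal signature of a cubic graph $\Gamma$, and suppose for contradiction that the negative edges do not form a matching; then some vertex $v$ is incident to at least two negative edges. First I would consider the cut $\del\{v\} = \{v\}$-star, which consists of exactly three edges since $\Gamma$ is cubic. Among these three edges at least two are negative, so this cut has at least two negative and at most one positive edge --- more negative than positive. By Lemma \ref{L:cutfr}, switching the set $\{v\}$ strictly reduces the number of negative edges, so $l(\Sigma) < |E^-(\Sigma)|$, contradicting the assumption (via Lemma \ref{L:swfr}) that $\Sigma$ is minimal, i.e.\ that $|E^-(\Sigma)| = l(\Sigma)$.

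The only point requiring a word of care is the degenerate case: one must make sure the ``cut'' $\del\{v\}$ really has three edges, which uses that $\Gamma$ is cubic (so no loops or fewer than three incident edges) --- and indeed in this paper we are working with simple graphs, so the three edges of the star at $v$ are genuinely distinct. There is no serious obstacle here; the corollary is essentially an immediate specialization of Lemma \ref{L:cutfr} to vertex-stars, and the ``hard part'' is merely recognizing that a vertex with two negative incident edges furnishes exactly the kind of cut Lemma \ref{L:cutfr} forbids in a minimal signature. I would phrase the whole argument in two sentences: a vertex meeting two negative edges yields a star-cut with a negative majority, so by Lemma \ref{L:cutfr} the signature is not minimal; hence in a minimal signature every vertex meets at most one negative edge, which is exactly the assertion that the negative edges form a matching.
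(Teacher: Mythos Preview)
Your proof is correct and matches the paper's intended argument: the corollary is stated immediately after Lemma \ref{L:cutfr} without further proof, precisely because the vertex-star cut $\del\{v\}$ with at least two negative edges among three is the obvious witness. Your two-sentence summary at the end is exactly the level of detail the paper expects the reader to supply.
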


Thus, we need only examine all the automorphism types of matchings in $P$ from Section \ref{structure}.  Let $E^- = M_k$ where $0 \leq k \leq 5$.

Matchings of 0 or 1 edge are trivial: $\Sigma$ is minimal.  When $k=2$, $E^- = M_{2,2}$ or $M_{2,3}$ so we have $P_{2,2}$ or $P_{2,3}$.

When $E^- = M_5$, switching the vertices of one of the pentagons separated by $E^-$ makes all edges positive, which is $+P$.  
When $E^- =  M_5\ \setm$ edge or $E^- = M_3' = M_5\ \setm$ 2 edges, the same switching gives $P_1$ or $P_{2,2}$, respectively.

For $E^- = M_4'$ switch $\{v_{kl},v_{ij},v_{km},v_{jl}\}$.  This also results in $P_{2,2}$.

The last case is $E^- = M_{3,2/3}$.  Here we switch $\{v_{jk},v_{jl},v_{im}\}$, getting $P_{2,3}$.

This proves that every signature is switching isomorphic to one of the six basic types.  

It remains to show that each of the six types is actually minimal.  We have shown that a signature in which $E^-$ is a matching is not minimal if it is not one of the six.  Thus, if no two of the six are switching isomorphic, each must be the unique minimal element of its switching isomorphism class.  The switching invariants $c_5^-$, $c_6^-$, and $l$ are more than enough to prove that none of the six can switch to any other.  Thus, the theorem is proved.
\end{proof}

\begin{cor}\label{C:min}
In each switching equivalence class and in each switching isomorphism class of signed Petersen graphs there is exactly one minimal isomorphism type.
\end{cor}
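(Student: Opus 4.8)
The plan is to derive Corollary \ref{C:min} directly from Theorem \ref{T:fr} together with the structural facts about switching classes already in hand. The key point is that Theorem \ref{T:fr} lists six isomorphism types of \emph{minimal} signed Petersen graph and asserts that each is the unique minimal isomorphism type within its own switching isomorphism class; the corollary merely repackages this in terms of switching equivalence classes as well. So the only real content to establish is that a switching equivalence class contains exactly one minimal \emph{isomorphism} type, given that a switching isomorphism class does.

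First I would recall that every switching isomorphism class is a union of switching equivalence classes, since $\Sigma_1 \simeq \Sigma_2$ means $\Sigma_2 \cong \Sigma_1^\zeta$ for some $\zeta$, so the switching equivalence class of $\Sigma_1^\zeta$ maps isomorphically onto that of $\Sigma_2$. By Lemma \ref{L:swfr}, minimality ($|E^-| = l(\Sigma)$) depends only on the switching equivalence class, and it is clearly preserved by isomorphism; hence ``is minimal'' is well defined on switching isomorphism classes and, within one such class, the minimal elements form a union of switching equivalence classes, each closed under isomorphism. Next I would invoke Theorem \ref{T:fr}: in each of the six switching isomorphism classes the minimal elements constitute a single isomorphism type. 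I must then check that a single isomorphism type, when it is minimal, cannot split into two distinct switching equivalence classes — but that is immediate, because if $\Sigma_1 \cong \Sigma_2$ via $\psi$ then $\Sigma_1$ and $\Sigma_2^{\psi^{-1}}$ are literally equal signed graphs on the same vertex set, hence switching equivalent; so an isomorphism type that meets a switching equivalence class is contained in it up to relabeling, and a minimal isomorphism type occupies exactly one switching equivalence class within its switching isomorphism class.

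The remaining step is existence: every switching equivalence class, and hence every switching isomorphism class, contains at least one minimal representative. This follows from Lemma \ref{L:swfr}, since $l(\Sigma) = \min_\zeta |E^-(\Sigma^\zeta)|$ is attained by some switching $\Sigma^\zeta$, which is then minimal by definition. Combining: each switching isomorphism class of signed Petersen graphs has a nonempty set of minimal elements forming one isomorphism type (Theorem \ref{T:fr}), and that type sits inside a single switching equivalence class of the class; conversely each switching equivalence class, being contained in some switching isomorphism class, inherits exactly that one minimal isomorphism type. I do not anticipate a serious obstacle here — the corollary is essentially a bookkeeping consequence of Theorem \ref{T:fr} and the fact that isomorphic signed graphs on relabeled vertex sets are switching equivalent in the obvious sense. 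The one place to be careful is the interplay between ``switching equivalence'' (same underlying graph, related by a switching function) and ``switching isomorphism'' (allowing a relabeling), so I would state explicitly the reduction that lets one pass between them, namely that an isomorphism $\psi:\Sigma_1\to\Sigma_2$ lets us replace $\Sigma_2$ by $\Sigma_2^{\psi^{-1}}$, an equal-as-signed-graphs copy of $\Sigma_1$, before speaking of switching equivalence.
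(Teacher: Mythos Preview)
Your overall strategy is right and matches the paper's (implicit) derivation: the corollary is meant to be an immediate consequence of Theorem~\ref{T:fr} together with Lemma~\ref{L:swfr}, and the paper offers no separate argument. However, one step in your proposal is both false and unnecessary.

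The problematic step is your claim that ``a single isomorphism type, when it is minimal, cannot split into two distinct switching equivalence classes,'' which you justify by saying that if $\Sigma_1\cong\Sigma_2$ via $\psi$ then $\Sigma_1$ and $\Sigma_2^{\psi^{-1}}$ are equal and hence switching equivalent. That observation is true but proves nothing about $\Sigma_1$ and $\Sigma_2$ themselves being switching equivalent. In fact the claim is false for the Petersen graph: the fifteen copies of $P_1$ (one for each choice of negative edge) are all isomorphic to one another yet lie in fifteen distinct switching equivalence classes, as the paper records explicitly in Table~\ref{Tb:count} and the surrounding discussion in Section~\ref{orbit}. So a minimal isomorphism type can, and typically does, span several switching equivalence classes within its switching isomorphism class.

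Fortunately the corollary does not require that claim. The correct argument is already contained in your ``conversely'' clause: any switching equivalence class $C$ sits inside some switching isomorphism class $C'$; minimal elements of $C$ exist by Lemma~\ref{L:swfr}; every minimal element of $C$ is a minimal element of $C'$; and by Theorem~\ref{T:fr} all minimal elements of $C'$ share one isomorphism type. That is the whole proof. Simply delete the paragraph asserting that the minimal type ``occupies exactly one switching equivalence class'' and the argument stands.
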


The corollary cannot say that there is a unique minimal signature in each switching equivalence class, because that is false.  In the switching equivalence class of $-P$ the unique minimal isomorphism type is $P_{3,3}$, but the exact choice of the three negative edges is not unique.  The number of minimal graphs in that switching equivalence class equals the number of sets of three edges all at distance 3, which is 5.

It is a remarkable fact that not just some but every switching equivalence class, and every switching isomorphism class, of signed Petersens has only one minimal signature up to isomorphism.  
It is not surprising that some switching equivalence classes have this property, but that all do is.  
By way of contrast, $K_n$ (with $n\geq4$) has some switching equivalence classes with unique minimal elements, either absolutely or only up to isomorphism, and some with multiple minimal members.  
In the class of the signature $K_n(e)$, which has exactly one negative edge $e$, clearly the only minimal signed graph is $K_n(e)$.  
In the class of $-K_n$ the minimal elements are all the signatures of $K_n$ where the positive edges form a cut of maximum size, i.e., where $V(K_n)$ is partitioned into two sets whose sizes differ by at most 1 \cite{Petersdorf}.  There are many such signatures and all are switching equivalent to $-K_n$; but they are all isomorphic.  
Now assume $n=2r+1\geq5$ and consider one more signature, where the negative edges are $e_{1i}$ for $i=2,3,\ldots,r+1$ and $e_{2,3}$.  Here $E^-$ is a connected subgraph.  This signature is minimal in its switching equivalence class by Lemma \ref{L:cutfr}.  Switching $v_1$, the negative edges are $e_{1i}$ for $i=r+2,r+3,\ldots,2r+1$ and $e_{2,3}$.  The number of negative edges is unchanged, but they now form a disconnected subgraph.  Thus, this switching equivalence class contains (at least) two minimal graphs that are not isomorphic.  
We see that for $K_n$ there are switching equivalence classes whose minimal graph is unique, those in which the minimal graph is unique only up to isomorphism, and those with nonisomorphic minimal members.  

Thus, the behavior of the Petersen signatures is not totally ordinary.  I suspect it is unusual but the truth is that no one knows whether, in regard to the uniqueness of either minimal signatures or isomorphism types of minimal signatures in either their switching equivalence or isomorphism class, most graphs resemble $K_n$ or $P$.

%%=================
\subsection{Frustration number}\label{frno}

The (vertex) frustration number has been less deeply explored than the frustration index, perhaps because it seems less suitable to the social psychology model and is certainly less relevant to spin glass theory.  Besides, it appears to be less subtle in distinguishing between different signatures of a graph, because most graphs have fewer vertices than edges.  Nevertheless, we find a use for it in counting colorations in Section \ref{col}.

\begin{lem}\label{L:swfrno}
Switching does not change $l_0(\Sigma)$.  Moreover, $l_0(\Sigma) \leq l(\Sigma)$ in every signed graph.
\end{lem}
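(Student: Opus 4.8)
The plan is to prove the two assertions separately, since each is elementary once we recall the right characterization of balance.

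\textbf{Switching invariance of $l_0$.} First I would observe that deleting a vertex and then switching should commute with switching first and then deleting the same vertex, in the following sense: if $\zeta$ is a switching function on $V$ and $v \in V$, then $(\Sigma^\zeta) \setm v = (\Sigma \setm v)^{\zeta'}$, where $\zeta'$ is the restriction of $\zeta$ to $V \setm v$. More generally, for any $W \subseteq V$, $(\Sigma^\zeta)\setm W = (\Sigma\setm W)^{\zeta|_{V\setm W}}$, simply because switching only re-signs edges according to the signs at their endpoints, and every edge of $\Sigma \setm W$ has both endpoints in $V \setm W$. Now suppose $W$ is a set of $l_0(\Sigma)$ vertices whose deletion balances $\Sigma$. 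Then $(\Sigma^\zeta)\setm W = (\Sigma \setm W)^{\zeta|_{V\setm W}}$ is a switching of a balanced graph, hence balanced by Lemma~\ref{L:switching} (balance is a circle-sign property, and switching preserves circle signs). So $l_0(\Sigma^\zeta) \leq l_0(\Sigma)$; applying the same reasoning to $\Sigma^\zeta$ with switching function $\zeta\inv = \zeta$ gives the reverse inequality, and equality follows.

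\textbf{The inequality $l_0 \leq l$.} Here the idea is that a balancing edge set can be converted into a balancing vertex set of no greater size. Let $B$ be a minimum balancing set of edges, so $|B| = l(\Sigma)$ and $\Sigma \setm B$ is balanced. Choose one endpoint from each edge of $B$ and call the resulting set $W$, so $|W| \leq |B| = l(\Sigma)$. I claim $\Sigma \setm W$ is balanced. Indeed, every edge of $B$ has an endpoint in $W$, so $\Sigma \setm W$ is a subgraph of $\Sigma \setm B$; and a subgraph of a balanced signed graph is balanced, since every circle of the subgraph is a circle of the larger graph and hence positive. Therefore $l_0(\Sigma) \leq |W| \leq l(\Sigma)$.

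\textbf{Anticipated obstacle.} Neither half is genuinely hard; the only point requiring a little care is making the restriction-of-switching-functions bookkeeping in the first part precise, and invoking Lemma~\ref{L:switching} rather than re-proving that switching preserves balance. One could alternatively prove switching invariance of $l_0$ directly from Lemma~\ref{L:swfr}-style reasoning, but routing it through the fact that balance is a circle-sign invariant (Lemma~\ref{L:switching}) is cleaner and avoids any edge-counting. I would write both parts in a couple of sentences each.
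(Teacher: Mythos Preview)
Your proof is correct and follows essentially the same approach as the paper. The paper's proof is much terser---it says the first part is ``obvious from Lemma~\ref{L:switching} because imbalance depends only on the set of negative circles,'' and for the second part it gives exactly your argument (delete one endpoint from each edge of a minimum balancing edge set)---but the underlying ideas are identical; you have simply spelled out the commutation of switching with vertex deletion more carefully.
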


\begin{proof}
The first part is obvious from Lemma \ref{L:switching} because imbalance depends only on the set of negative circles.  The second part follows from the fact that, if we delete one endpoint from each edge of a minimum balancing edge set, we get a balanced subgraph by deleting at most $l$ vertices.
\end{proof}

\begin{thm}\label{T:frno}
The frustration numbers of signed Petersen graphs are given in Table \ref{Tb:frno}.  All have frustration number equal to the frustration index.
\end{thm}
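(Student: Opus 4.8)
The plan is to get everything from the two inequalities $l_0\le l$ and $l_0\ge l$. The first holds for \emph{every} signed graph by Lemma \ref{L:swfrno}, so only $l_0(\Sigma)\ge l(\Sigma)$ is at issue, and since the frustration indices in Table \ref{Tb:fr} already take the values $0,1,2,2,3,3$, it suffices to rule out, for each of the six minimal types, that deleting fewer than $l$ vertices can balance it. In fact I would prove the cleaner general fact that $l_0=l$ for every signature of every graph of maximum degree at most $3$; the Petersen graph being cubic, Theorem \ref{T:frno} then follows immediately and Table \ref{Tb:frno} simply copies Table \ref{Tb:fr}.

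The key statement I would establish, by induction on $k$, is: if $\Gamma$ has maximum degree $\le 3$ and $\Sigma\setm W$ is balanced with $|W|=k$, then $l(\Sigma)\le k$. The base cases $k=0$ (then $\Sigma$ is balanced, $l=0$) and $k=1$ carry all the content. For $k=1$, say $W=\{v\}$: since $\Sigma\setm v$ is balanced, use Lemma \ref{L:switching} (equivalently Proposition \ref{P:balance}) to switch $\Sigma$ so that every edge not meeting $v$ is positive; then all negative edges lie in the star at $v$, which has at most three edges, and switching $v$ flips exactly that star. Hence one of the two switchings has at most one negative edge (a star of $\le 3$ edges with $m$ negatives becomes one with $3-m$, and $\min(m,3-m)\le 1$), so $l(\Sigma)\le 1$ by Lemma \ref{L:swfr}.

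For the inductive step, with $\Sigma\setm W$ balanced, $|W|=k\ge 2$, pick $u\in W$. Apply the inductive hypothesis to $\Sigma\setm u$ (still subcubic) with the vertex set $W\setm\{u\}$ of size $k-1$: this gives an edge set $B$, $|B|\le k-1$, with $\Sigma\setm(\{u\}\cup B)$ balanced. Now $\Sigma\setm B$ is again subcubic and $(\Sigma\setm B)\setm u$ is balanced, so the base case gives $l(\Sigma\setm B)\le 1$; adjoining a minimum balancing edge set of $\Sigma\setm B$ to $B$ gives a balancing edge set of $\Sigma$, so $l(\Sigma)\le |B|+l(\Sigma\setm B)\le k$, completing the induction. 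If one wants the entries of Table \ref{Tb:frno} witnessed concretely rather than quoted from Lemma \ref{L:swfrno}, one can also note e.g.\ that $P\setm N(v)=C_6\cupdot K_1$ is bipartite, so deleting a closed neighborhood balances $-P$, giving $l_0(P_{3,3})\le 3$ directly, and similarly for the others.

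I expect the only genuinely delicate point to be the bookkeeping in the $k=1$ base case: arranging the switching so that the negative edges are confined to a single star, and then trusting the parity count on a star of at most three edges; once that is in hand the rest is a routine induction. The alternative, staying inside $P$ and checking by hand that no one vertex meets all negative circles of $P_{2,2}$ and $P_{2,3}$ and no two vertices meet all negative circles of $P_{3,2}$ and $P_{3,3}$ (using the counts of Theorem \ref{T:circ} and the known structure of $P\setm W$), is possible but more laborious and less illuminating, so I would not take that route.
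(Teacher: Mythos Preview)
Your argument is correct and takes a genuinely different route from the paper. The paper proceeds case by case inside $P$: it finds two vertex-disjoint negative pentagons in $P_{2,2}$ and $P_{2,3}$ to force $l_0\ge 2$, and for $P_{3,2}$ and $P_{3,3}$ it analyzes the structure of $P\setm v$ and $P\setm\{u,v\}$ directly to rule out $l_0\le 2$. Your approach instead proves the uniform statement that $l_0(\Sigma)=l(\Sigma)$ for \emph{every} signed graph of maximum degree at most $3$, by the clean induction you describe; the Petersen case then drops out with no case analysis at all.

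This is not merely a stylistic difference: your argument in fact settles Conjecture~\ref{Cj:cubic}, which the paper leaves open. The only point worth tightening is the phrasing in the $k=1$ base case: for a vertex of degree $d\le 3$ the switched star has $d-m$ negatives, not $3-m$, but since $\min(m,d-m)\le\lfloor d/2\rfloor\le 1$ the conclusion is unaffected. The inductive step is sound as written: the inductive hypothesis is applied to $\Sigma\setm u$ (still subcubic, with a balancing vertex set of size $k-1$), and then the $k=1$ case is applied to $\Sigma\setm B$ (also subcubic), yielding $l(\Sigma)\le (k-1)+1=k$. What the paper's approach buys is explicit witnesses (disjoint negative pentagons, the $K_4$-subdivision in $P_{3,2}\setm v$) that illuminate the specific structure of each signature; what your approach buys is generality and the resolution of the conjecture.
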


\begin{table}[htb]
\begin{center}
\begin{tabular}{|r||c|c|c|c|c|c|}
\hline
\vstrut{15pt}$(P,\sigma)$	&\hbox to 2em{\,$+P$} &\hbox to 2em{\;\,$P_1$} &\hbox to 2em{\,$P_{2,2}$} &\hbox to 2em{\,$P_{2,3}$} &\hbox to 2em{\,$P_{3,2}$}	&\hbox to 2em{\,$P_{3,3}$}	\\[3pt]
\hline
\vstrut{15pt}$l_0(P,\sigma)$	&0	&1	&2	&2	&3	&3	\\[2pt]
\hline
\end{tabular}
\end{center}
\bigskip
\caption{The frustration number of each switching isomorphism type.}
\label{Tb:frno}
\end{table}

\begin{proof}
Consult Figure \ref{F:types}.  The values for $+P$ and $P_1$ are obvious.  

A signature that has two vertex-disjoint negative pentagons cannot have $l_0<2$; if the frustration index is $2$, as in $P_{2,2}$ and $P_{2,3}$, that must be $l_0$.  

In $P_{3,2}$ the negative pentagons are the inner star and all pentagons with two outer edges.  To achieve balance we must delete an inner vertex.  Deleting one such vertex $v$ gives $P_{3,2}\setm v$, which is a subdivision of $K_4$ in which the paths corresponding to two opposite edges in $K_4$ are negative and the paths that correspond to other edges in $K_4$ are positive.  Every circle in $P_{3,2}\setm v$ that corresponds to a triangle of $K_4$ is negative.  It is impossible to make this graph balanced by deleting only one edge; hence $l_0(P_{3,2}) = 3$.

Because $P_{3,3}$ is antibalanced, every pentagon is negative.  That means a vertex set whose deletion makes for balance must cover all the pentagons.  No two vertices can do that, as one can verify by inspecting adjacent and nonadjacent pairs; but any vertex neighborhood $N(v)$ does.  Hence, $l_0(-P) = 3$.  

Comparing Tables \ref{Tb:fr} and \ref{Tb:frno} shows that $l_0 = l$ in every case.
\end{proof}

One can easily see that $l_0=l$ is not true in general.  
However, I verified that equality holds for every signature of $K_4$ or $K_{3,3}$.  I hesitantly propose:

\begin{conj}\label{Cj:cubic}
For every signed cubic graph $\Sigma$, $l_0(\Sigma) = l(\Sigma).$
\end{conj}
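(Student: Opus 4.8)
The natural approach is to exploit Corollary \ref{C:cubicfr}: after switching to a minimal signature, $E^-$ is a matching, so $l(\Sigma) = |E^-|$ is the number of edges in that matching. Since $l_0 \le l$ always holds by Lemma \ref{L:swfrno}, the whole content is the reverse inequality $l_0(\Sigma) \ge l(\Sigma)$ for a minimal cubic signature whose negative edges form a matching $M$ with $|M| = l$. Equivalently, I want to show that no set of fewer than $|M|$ vertices can hit enough negative circles to leave the remaining signed graph balanced. The plan is to argue that deleting $t < |M|$ vertices from a minimal signature still leaves a signed graph containing a negative circle, by producing such a circle explicitly from the surviving structure around the matching edges.

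The key steps, in order, would be: (1) Switch so that $E^- = M$ is a matching and $|M| = l(\Sigma)$; this uses Lemma \ref{L:swfr} and Corollary \ref{C:cubicfr}. (2) Observe that a signed graph with a matching of negative edges is balanced if and only if no circle uses an odd number of matching edges, so a balancing vertex set $W$ must destroy every circle through an odd-sized subset of $M$; in particular, for each negative edge $e \in M$ surviving in $\Sigma \setm W$, every circle through $e$ in $\Sigma \setm W$ must use at least one more negative edge. (3) Set up a counting/covering argument: each vertex of $W$ lies on only the edges incident to it (degree $\le 3$ since the graph is cubic), so it can "kill" the negative edges only by being an endpoint of them; if $W$ contains $a$ endpoints of matching edges and $b$ other vertices with $a + b = t < |M|$, then at least $|M| - a > b$ negative edges survive intact in $\Sigma \setm W$. (4) Use a connectivity/Menger-type argument on the cubic graph: because $\Sigma \setm W$ still has the surviving negative edges and relatively few vertices removed, one can route a circle through exactly one surviving negative edge (avoiding the others), yielding a negative circle — contradiction.

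The main obstacle is step (4): turning "few vertices deleted" into the existence of a circle through a single negative edge and no other negative edge. A cubic graph deleted of $t$ vertices can become disconnected or locally sparse, so one cannot simply invoke high connectivity; the argument must be made to work with only the hypothesis that $t < |M|$, which is a genuinely small deficiency. I would try to handle this by an extremal/minimal-counterexample argument: take a signed cubic $\Sigma$ with $l_0 < l$ minimizing $|V|$ (or minimizing $l$), then analyze the structure forced near a minimum balancing vertex set $W$ — each $w \in W$ must be "essential," meaning $\Sigma \setm (W \setm w)$ is unbalanced, which constrains the negative circles through $w$ and lets one either find a smaller counterexample by a reduction (contracting a positive edge, suppressing a degree-2 vertex, or deleting a vertex all of whose incident edges are positive and "redundant") or directly exhibit a negative circle. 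An alternative route, possibly cleaner, is to prove a local statement: in a minimal cubic signature, the negative matching edges can be linked pairwise by positive paths forming a structure (a kind of "negative-edge gadget graph") on which a vertex cover must have size $\ge |M|$; but making this precise across all cubic graphs is exactly where the difficulty concentrates, and I suspect the conjecture, though plausible, requires a careful case analysis of how balancing vertex sets interact with the matching structure rather than a one-line deduction from the results already in hand.
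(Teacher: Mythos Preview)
The statement you are trying to prove is a \emph{conjecture} in the paper, not a theorem: the author writes ``I hesitantly propose'' and offers no proof, only the supporting evidence that equality holds for all signatures of $K_4$ and $K_{3,3}$ and for the Petersen graph. So there is no proof in the paper to compare your proposal against.

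Your outline is a reasonable attack, and steps (1)--(3) are correct and easy. But step (4) is not a proof step; it is a restatement of the conjecture in slightly different language. You want to conclude that after deleting $t < |M|$ vertices some negative circle survives, and you propose to do this by exhibiting a circle through exactly one surviving negative matching edge. Nothing in your argument produces such a circle. The counting in step (3) only tells you that more negative edges survive than the number $b$ of ``non-endpoint'' deleted vertices; it says nothing about whether the surviving subgraph contains a circle meeting the surviving matching in odd parity. A cubic graph minus $t$ vertices can have many components, bridges, and degree-$1$ vertices, and a surviving negative edge may lie on no circle at all, or only on circles that pass through an even number of other surviving negative edges. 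Your suggested minimal-counterexample reduction (contracting a positive edge, suppressing a degree-$2$ vertex, etc.) does not preserve cubicity, so the induction does not close. You yourself flag step (4) as ``exactly where the difficulty concentrates,'' and that is accurate: what you have written is a plan of attack with the decisive lemma missing, not a proof. Absent a genuine structural argument for step (4), the proposal does not settle the conjecture.
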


%%====================================
\section{Automorphisms and Orbits}\label{aut}

In this section we develop a general theory of switching automorphism groups of signed graphs.  Then we compute the automorphism and, more importantly, switching automorphism groups of the six basic signed Petersen graphs and their negatives.  Lastly we apply that information to find the number of isomorphic but switching-inequivalent copies of each of the six basic signatures.

We regard an automorphism of $\Gamma$ as a permutation of $V$ and we write actions as superscripts, so products are read from left to right.

%%=================
\subsection{Automorphisms and switching automorphisms of signed graphs}\label{genaut}

An automorphism of a signed graph is an isomorphism with itself; that is, it is an automorphism of the underlying graph that preserves edge signs.  A \emph{switching automorphism} of a signed graph is a switching isomorphism with itself.  (As with switching isomorphisms, cf.\ near Lemma \ref{L:switching}, switching automorphisms really are automorphisms: of the biased graph $(|\Sigma|,\cC^+(\Sigma))$.)  The group of automorphisms is $\Aut(\Sigma)$ and that of switching automorphisms is $\SwAut(\Sigma)$.  

%%%%%%
\subsubsection{Automorphisms}\label{aut}

As concerns automorphisms, a signed graph is just a graph whose edges are colored with two colors; an automorphism is a color-preserving graph automorphism.  There is not much to say except the following:

\begin{prop}\label{P:autgroup}
For a signed graph $\Sigma = (\Gamma,\sigma)$, 
$$\Aut\Sigma = \Aut\Gamma \cap \Aut\Sigma^+ = \Aut\Gamma \cap \Aut\Sigma^- = \Aut\Sigma^+ \cap \Aut\Sigma^-.$$
\end{prop}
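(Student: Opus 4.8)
The plan is to prove the chain of equalities by showing that each of the four groups consists of exactly the graph automorphisms of $\Gamma$ that fix the edge-coloring into $E^+$ and $E^-$, and that fixing one of the two color classes setwise automatically fixes the other. The key observation is that $E^+$ and $E^-$ partition $E(\Gamma)$, so a bijection $\psi$ of $V$ that is an automorphism of $\Gamma$ satisfies $E(\Sigma^+)^\psi = E(\Sigma^+)$ if and only if $E(\Sigma^-)^\psi = E(\Sigma^-)$: the complement (within $E(\Gamma)$) of a $\psi$-invariant set is $\psi$-invariant. This single remark collapses most of the work.

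First I would unwind the definitions. For $\psi \in \Aut\Gamma$, being in $\Aut\Sigma$ means $\sigma(e^\psi) = \sigma(e)$ for every edge $e$, which is equivalent to saying $\psi$ maps $E^+$ into $E^+$ and $E^-$ into $E^-$; since $\psi$ is a bijection on the finite set $E(\Gamma)$, ``into'' upgrades to ``onto,'' so this is exactly the condition $\psi \in \Aut\Sigma^+$ and $\psi \in \Aut\Sigma^-$ simultaneously (where here I mean $\psi$ restricted to $V$ induces an automorphism of the spanning subgraph $(V,E^\pm)$). That already gives $\Aut\Sigma = \Aut\Gamma \cap \Aut\Sigma^+ \cap \Aut\Sigma^-$.

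Next I would note that for $\psi \in \Aut\Gamma$, invariance of $E^+$ forces invariance of $E^- = E(\Gamma) \setm E^+$ and vice versa, so $\Aut\Gamma \cap \Aut\Sigma^+ = \Aut\Gamma \cap \Aut\Sigma^-$, and both equal the triple intersection, hence equal $\Aut\Sigma$. For the last equality $\Aut\Sigma^+ \cap \Aut\Sigma^- = \Aut\Sigma$, I would argue that any permutation $\psi$ of $V$ preserving both $E^+$ and $E^-$ as edge sets preserves their union $E(\Gamma)$, hence lies in $\Aut\Gamma$, and is therefore already covered by the previous expression; conversely $\Aut\Sigma \subseteq \Aut\Sigma^+ \cap \Aut\Sigma^-$ is immediate. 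One small point to state carefully is that $\Sigma^+$ and $\Sigma^-$ are taken as spanning subgraphs on the same vertex set $V$, so ``$\Aut\Sigma^+$'' is being regarded as a subgroup of $\Sym(V)$ and the intersections make sense inside $\Sym(V)$.

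I do not expect a genuine obstacle here; the proposition is essentially a tautology once the partition remark is isolated. The only thing requiring a word of care is the direction $\Aut\Sigma^+ \cap \Aut\Sigma^- \subseteq \Aut\Gamma$: an automorphism of $\Sigma^+$ alone need not be an automorphism of $\Gamma$ (it could scramble negative edges arbitrarily, or map a pair joined by a negative edge to a non-adjacent pair), but once we also require it to be an automorphism of $\Sigma^-$, every edge of $\Gamma$ — positive or negative — is mapped to an edge of the same sign, so adjacency in $\Gamma$ is preserved and we land in $\Aut\Gamma$. That is the one place where using \emph{both} color classes is essential, and I would make sure the write-up flags it explicitly.
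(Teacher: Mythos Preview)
Your argument is correct and complete; the paper itself states this proposition without proof, treating it as self-evident from the remark that an automorphism of a signed graph is just a color-preserving automorphism of the underlying edge-$2$-colored graph. Your write-up supplies exactly the natural justification, and your care about the inclusion $\Aut\Sigma^+ \cap \Aut\Sigma^- \subseteq \Aut\Gamma$ (via preservation of $E^+ \cup E^- = E(\Gamma)$) is the one nontrivial step worth flagging.
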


%%%%%%
\subsubsection{Switching permutations and switching automorphisms}\label{swaut}

Switching automorphisms are more complicated; to treat them we need precise definitions and notation.  
We begin with the action of automorphisms of $\Gamma$ upon signatures:
$$
\sigma^\alpha(v^\alpha w^\alpha) := \sigma(vw), 
$$
and $\Sigma^\alpha := (\Gamma, \sigma^\alpha).$  The action of an automorphism on a switching function is similar:
$$
\zeta^\alpha(v^\alpha) := \zeta(v).
$$
This leads to the commutation law
\begin{equation}
\zeta \alpha = \alpha \zeta^\alpha,
\label{E:commutation}
\end{equation}
because 
$$\sigma^{\zeta \alpha}(v^\alpha w^\alpha) = (\sigma^\zeta)^\alpha(v^\alpha w^\alpha) = \sigma^\zeta(vw) = \zeta(v) \sigma(vw) \zeta(w)$$ 
while 
$$\sigma^{\alpha \zeta^\alpha}(v^\alpha w^\alpha) = (\sigma^\alpha)^{\zeta^\alpha}(v^\alpha w^\alpha) = {\zeta^\alpha}(v^\alpha) \sigma^\alpha(v^\alpha w^\alpha) {\zeta^\alpha}(w^\alpha) = \zeta(v) \sigma(vw) \zeta(w).$$
Rewriting \eqref{E:commutation} as $\alpha\inv \zeta \alpha = \zeta^\alpha$, we see that the action of $\alpha$ is that of conjugation, as the notation suggests.  Rewriting it in terms of $\zeta_X$ we obtain the important equation
\begin{equation}
(\zeta_X)^\alpha = \zeta_{X^\alpha},
\label{E:commutationset}
\end{equation}
since $\zeta_X^\alpha(v^\alpha) = \zeta_X(v) = \zeta_{X^\alpha}(v^\alpha).$

Now we can define a preliminary group to the switching automorphism group.  The ground set is $\signs^V \times \Aut\Gamma$, whose elements we call, for lack of a better name, \emph{switching permutations of\/ $\Gamma$}, because when they act on a signature of $\Gamma$ they switch signs and permute the vertices.  A \emph{switching permutation of\/ $\Sigma$} is any $\zeta\gamma \in \signs^V \times \Aut\Gamma$ such that $\Sigma^{\zeta\gamma} = \Sigma$.  
The multiplication rule is 
$$(\zeta,\alpha)(\eta,\beta) = (\zeta\eta^{\alpha\inv}, \alpha\beta).$$
Because $\signs^V$ and $\Aut\Gamma$ embed naturally into $\signs^V \times \Aut\Gamma$ as $\signs^V \times \{\id\}$ and $\{\eps\} \times \Aut\Gamma$, we regard them as subgroups of $\signs^V \times \Aut\Gamma$ and write the element $(\zeta,\alpha)$ as a product, $\zeta\alpha$.  The equation of multiplication is given by the next lemma.

\begin{lem}\label{L:multinvset}
The product of switching permutations $\zeta_X\gamma$ and $\zeta_Y\xi$, where $\zeta_X, \zeta_Y \in \signs^V$ and $\gamma, \xi \in \Aut \Gamma$, is given by 
\begin{equation}
\zeta_X\gamma \cdot \zeta_Y\xi = \zeta_X\zeta_{Y^{\gamma\inv}} \cdot \gamma\xi.
\label{E:multset}
\end{equation}
The inverse of a switching permutation is 
\begin{equation}
(\zeta_X\gamma)\inv = \zeta_{X^{\gamma}} \gamma\inv.
\label{E:invset}
\end{equation}
\end{lem}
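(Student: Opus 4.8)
The plan is to read both formulas off the semidirect-product multiplication rule $(\zeta,\alpha)(\eta,\beta) = (\zeta\eta^{\alpha\inv},\alpha\beta)$ on $\signs^V \times \Aut\Gamma$, using the commutation identity \eqref{E:commutationset}, $(\zeta_X)^\alpha = \zeta_{X^\alpha}$, and the elementary fact that $\signs^V$ has exponent $2$, so every switching function is its own inverse and $\zeta_X\zeta_X = \eps = \zeta_{\eset}$.

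First I would establish \eqref{E:multset}. Applying the multiplication rule with $\zeta = \zeta_X$, $\alpha = \gamma$, $\eta = \zeta_Y$, $\beta = \xi$ gives the pair $(\zeta_X(\zeta_Y)^{\gamma\inv},\gamma\xi)$, and \eqref{E:commutationset} with $\alpha = \gamma\inv$ turns the first coordinate into $\zeta_X\zeta_{Y^{\gamma\inv}}$; rewriting the pair in the product notation $\zeta\alpha$ in which the lemma is phrased yields $\zeta_X\gamma\cdot\zeta_Y\xi = \zeta_X\zeta_{Y^{\gamma\inv}}\cdot\gamma\xi$. In effect this is just the associativity computation $\zeta_X\gamma\cdot\zeta_Y\xi = \zeta_X(\gamma\zeta_Y)\xi = \zeta_X(\zeta_Y^{\gamma\inv}\gamma)\xi$, where $\gamma\zeta_Y = \zeta_Y^{\gamma\inv}\gamma$ is \eqref{E:commutation} rearranged; so the formula is forced once \eqref{E:commutation} is adopted. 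Then I would verify \eqref{E:invset} by direct multiplication: using the formula just proved with $\xi = \gamma\inv$ and $Y = X^\gamma$,
$$
\zeta_X\gamma\cdot\zeta_{X^\gamma}\gamma\inv = \zeta_X\,\zeta_{(X^\gamma)^{\gamma\inv}}\cdot\gamma\gamma\inv = \zeta_X\zeta_X\cdot\id = \eps\cdot\id = \id,
$$
since $(X^\gamma)^{\gamma\inv} = X$. A one-sided inverse in a group is two-sided, so this suffices; alternatively one solves $(\zeta_X\gamma)(\eta\beta) = \id$ directly, which forces $\beta = \gamma\inv$ and then $\eta = (\zeta_X)^{\gamma} = \zeta_{X^\gamma}$.

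There is no genuine obstacle here; the whole content is bookkeeping, and the only place demanding care is the direction of the action. Because automorphisms act on the right and conjugation $\alpha\inv\zeta\alpha = \zeta^\alpha$ runs opposite to the usual semidirect-product convention, the exponent in the multiplication rule is $\alpha\inv$ rather than $\alpha$, and correspondingly it is $Y^{\gamma\inv}$, not $Y^\gamma$, that appears in \eqref{E:multset} while it is $X^{\gamma}$ that appears in \eqref{E:invset}. Keeping \eqref{E:commutation}--\eqref{E:commutationset} in exactly the form stated and substituting mechanically avoids any side- or inverse-error.
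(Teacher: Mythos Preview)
Your argument is correct and follows the paper's approach almost exactly: the product formula is read off the semidirect-product rule together with \eqref{E:commutationset}, and the inverse is checked by a one-line multiplication. The only cosmetic difference is that the paper verifies the inverse by computing $\zeta_{X^{\gamma}}\gamma\inv\cdot\zeta_X\gamma$ rather than $\zeta_X\gamma\cdot\zeta_{X^{\gamma}}\gamma\inv$, but as you note a one-sided inverse in a group suffices.
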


\begin{proof}
The product formula is a restatement of the previous equations.  We verify the inversion formula with a short calculation:
$$
\zeta_{X^{\gamma}} \gamma\inv \cdot \zeta_X\gamma = \zeta_{X^{\gamma}} \zeta_{X^{\gamma}}  \cdot\gamma\inv \gamma = \zeta_{X^{\gamma} \oplus X^{\gamma}} \,\id = \eps\,\id
$$
by \eqref{E:commutationset}.
\end{proof}

The commutation laws \eqref{E:commutation} and \eqref{E:commutationset} imply that the conjugate of a switching function by an automorphism is another switching function.  Consequently, $\signs^V$ is a normal subgroup.  
That makes the group of switching permutations a semidirect product of $\signs^V$ and $\Aut\Gamma$, so we write it as $\signs^V \rtimes \Aut\Gamma$.  We write $p_A$ for the projection onto $\Aut\Gamma$.

The action of $\signs^V \rtimes \Aut\Gamma$ on signed graphs $(\Gamma,\sigma)$ has kernel $\fK_\Gamma \times \{\id\}$.  The quotient group is the \emph{switching automorphism group of\/ $\Gamma$}, $$
\SwAut\Gamma := \big( \signs^V \rtimes \Aut\Gamma \big) / \big( \fK_\Gamma \times \{\id\} \big).
$$
Since $\Sw\Gamma$ can be identified with the normal subgroup $\Sw\Gamma \times \{\id\}$, $\fK_\Gamma$ with $\fK_\Gamma \times \{\id\}$, and $\Aut\Gamma$ with the subgroup $\{\bareps\} \times \Aut\Gamma$, the switching automorphism group of $\Gamma$ is a semidirect product, 
$$
\SwAut\Gamma = \Sw\Gamma \rtimes \Aut\Gamma,
$$
which projects onto $\Aut\Gamma$ by a mapping $\barp_A$.  We refer to elements of $\SwAut\Gamma$ as \emph{switching automorphisms of\/ $\Gamma$}.  (That is a slight abuse of terminology since they do not actually switch $\Gamma$; they switch signatures of $\Gamma$.)  

A switching automorphism of $\Gamma$ can be written in several equivalent ways.  As a member of $\big( \signs^V \rtimes \Aut\Gamma \big) / \big( \fK_\Gamma \times \{\id\} \big)$ it is $\overline{(\zeta,\alpha)} = \overline{\zeta\alpha}$.  As a member of $\Sw\Gamma \rtimes \Aut\Gamma$ it  is $(\bar\zeta,\alpha) = \bar\zeta\alpha$.  By the natural embeddings $\overline{\zeta\,\id} = \bar\zeta\,\id = \bar\zeta$ and $\overline{\eps\alpha} = \bar\eps\alpha = \alpha$.  In particular, the identity element of $\SwAut\Gamma$ is $\overline{\eps\,\id} = \bareps\,\id = \id$.  
Lemma \ref{L:multinvset} applies in $\SwAut\Gamma$ simply by putting a bar over the switching functions.
(Sometimes we omit the bar, as it is obvious which element of $\SwAut\Gamma$ is meant by $\zeta\alpha$.)

The switching automorphism group of $\Gamma$ contains the switching automorphism group of each signed graph $\Sigma = (\Gamma,\sigma)$.  The latter group is 
$$
\SwAut\Sigma := \{ \bar\zeta\alpha : \alpha \in \Aut\Gamma \text{ such that } \alpha: \Sigma^\zeta \cong \Sigma \}.
$$
That is, $\alpha$ must be an isomorphism from the switched signed graph to the original signed graph.  This group projects into $\Aut\Gamma$ by the mapping $\barp_A|_{\SwAut\Sigma}$, which for simplicity we also write as $\barp_A$.  
We identify $\Aut\Sigma$ with the subgroup $\{\bareps\alpha \in \SwAut\Gamma: \alpha \in \Aut\Sigma\}$.  
Note that a switching permutation of $\Sigma$ is any switching permutation of $\Gamma$ such that $\bar\zeta\gamma \in \SwAut\Sigma$.

%%%%%%
\subsubsection{Automorphisms and switching automorphisms}\label{autswaut}

Now we can state relationships amongst the automorphisms and switching automorphisms of $\Sigma$ and the automorphisms of $\Gamma$.

\begin{prop}\label{P:autgroups}
As a function from $\SwAut\Sigma$ to $\Aut\Gamma$, $\barp_A$ is a monomorphism.  
The groups satisfy $\Aut\Sigma \leq \barp_A(\SwAut\Sigma) \leq \Aut\Gamma$.
\end{prop}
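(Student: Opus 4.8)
The plan is to prove the two assertions in turn: first that $\barp_A$ restricted to $\SwAut\Sigma$ is injective, and then the chain of subgroup inclusions.

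For injectivity, suppose $\bar\zeta\alpha$ and $\bar\eta\alpha$ both lie in $\SwAut\Sigma$ with the same image $\alpha$ under $\barp_A$. Then $(\bar\zeta\alpha)(\bar\eta\alpha)\inv \in \SwAut\Sigma$ and, using the multiplication and inversion formulas of Lemma \ref{L:multinvset} (with bars), its projection to $\Aut\Gamma$ is $\alpha\alpha\inv = \id$; so it is an element $\bar\delta\,\id = \bar\delta$ with $\bar\delta \in \Sw\Gamma$. An element of $\SwAut\Sigma$ of the form $\bar\delta\,\id$ satisfies $\Sigma^\delta = \Sigma$ (the isomorphism is the identity automorphism), which by definition of $\fK_\Gamma$ means $\bar\delta = \bareps$ in $\Sw\Gamma$. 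Hence $\bar\zeta\alpha = \bar\eta\alpha$, so $\barp_A|_{\SwAut\Sigma}$ is a monomorphism. Equivalently, and more transparently: the kernel of $\barp_A$ on all of $\SwAut\Gamma$ is $\Sw\Gamma$ (identified with $\Sw\Gamma \times \{\id\}$), and $\SwAut\Sigma \cap \Sw\Gamma = \{\bar\zeta : \Sigma^\zeta = \Sigma\} = \{\bareps\}$ because $\Sigma^\zeta = \Sigma$ forces $\zeta \in \fK_\Gamma$.

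For the inclusions, the right-hand one $\barp_A(\SwAut\Sigma) \leq \Aut\Gamma$ is immediate since $\barp_A$ maps into $\Aut\Gamma$ by construction, and its image is a subgroup because $\barp_A$ is a group homomorphism. For the left-hand inclusion $\Aut\Sigma \leq \barp_A(\SwAut\Sigma)$: recall we identify $\Aut\Sigma$ with $\{\bareps\alpha : \alpha \in \Aut\Sigma\} \leq \SwAut\Gamma$. If $\alpha \in \Aut\Sigma$, then $\alpha$ is a sign-preserving automorphism of $\Gamma$, so $\Sigma^{\eps\alpha} = \Sigma^\alpha = \Sigma$, whence $\bareps\alpha \in \SwAut\Sigma$; and $\barp_A(\bareps\alpha) = \alpha$. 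Thus every element of $\Aut\Sigma$ lies in $\barp_A(\SwAut\Sigma)$, completing the chain.

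There is no serious obstacle here; the statement is essentially a bookkeeping consequence of the semidirect-product structure of $\SwAut\Gamma = \Sw\Gamma \rtimes \Aut\Gamma$ set up before the proposition, together with the definition of the kernel $\fK_\Gamma$. The only point that requires a moment's care is the identification of $\SwAut\Sigma \cap \Sw\Gamma$ with the trivial group, i.e.\ that a pure switching $\bar\zeta$ which is a switching automorphism of $\Sigma$ must already act trivially on $\Sigma$ and hence be the identity of $\Sw\Gamma$ — this is exactly where Lemma \ref{L:switching}, via the definition of $\fK_\Gamma$, is used.
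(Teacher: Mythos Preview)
Your proof is correct and follows essentially the same approach as the paper: both arguments reduce injectivity to checking that the only element of $\SwAut\Sigma$ of the form $\bar\zeta\,\id$ is the identity, which follows because $\Sigma^\zeta = \Sigma$ forces $\zeta \in \fK_\Gamma$. The paper's version is terser (it simply says the subgroup relations are ``obvious'' once injectivity is established), while you spell out the inclusions explicitly; one minor quibble is that the key fact at the end is really just the defining equality $\fK_\Gamma = \{\zeta : \Sigma^\zeta = \Sigma\}$ rather than Lemma~\ref{L:switching} per se.
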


\begin{proof}
It is obvious that $\barp_A$ is a homomorphism.  To prove it is injective we examine a switching function $\zeta$ such that $\zeta\,\id$ is a switching automorphism.  That means $\Sigma^\zeta = \Sigma$, in other words, $\zeta \in \fK_\Gamma$.  But that means the only element of the form $\bar\zeta\,\id$ in $\SwAut\Sigma$ is the trivial one, $\bareps\,\id$.  Hence, $\barp_A$ is injective.

The relationships of the groups are now obvious.
\end{proof}

Another relationship makes an obvious but valuable lemma.

\begin{lem}\label{L:stab+-}
The automorphisms of $\Sigma$ are the automorphisms of $|\Sigma|$ that stabilize $\Sigma^+$, or equivalently $\Sigma^-$. 
\end{lem}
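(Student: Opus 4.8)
The plan is to unwind the definition of an automorphism of a signed graph and to observe that ``preserving edge signs'' coincides with ``stabilizing the positive part,'' once one remembers that an automorphism is in particular a permutation of the edge set. First I would fix $\alpha \in \Aut|\Sigma| = \Aut\Gamma$ and recall that, by definition, $\alpha \in \Aut\Sigma$ precisely when $\sigma(e^\alpha) = \sigma(e)$ for every edge $e \in E$. This condition says exactly that $\alpha$ carries $E^+$ into $E^+$ and $E^-$ into $E^-$.

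Next I would upgrade ``carries into'' to ``stabilizes.'' Since $\alpha$ permutes the edge set $E = E^+ \cup E^-$ (a disjoint union), the inclusion $(E^+)^\alpha \subseteq E^+$ forces $(E^+)^\alpha = E^+$, and hence also $(E^-)^\alpha = E^-$; the roles of $+$ and $-$ are symmetric. Because $\Sigma^+ = (V,E^+)$ and $\Sigma^- = (V,E^-)$ carry the same vertex set as $\Gamma$, and $\alpha$ already acts on $V$ as an adjacency-preserving bijection, the assertion ``$\alpha$ stabilizes $\Sigma^+$'' is literally the equation $(E^+)^\alpha = E^+$, and similarly for $\Sigma^-$. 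Chaining these equivalences yields that $\alpha \in \Aut\Sigma$ if and only if $\alpha$ stabilizes $\Sigma^+$ if and only if $\alpha$ stabilizes $\Sigma^-$, which is the lemma. One could equally well read the statement off from Proposition~\ref{P:autgroup}.

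There is essentially no obstacle here. The one point that deserves an explicit sentence is the passage from the containment $(E^+)^\alpha \subseteq E^+$ to the equality $(E^+)^\alpha = E^+$: this uses that $\alpha$ is a bijection of $E^+ \cup E^-$, together with the fact that the complementary containment $(E^-)^\alpha \subseteq E^-$ is automatic, and it is exactly what makes the ``or equivalently $\Sigma^-$'' clause legitimate. I would therefore spell out that observation rather than leave it implicit.
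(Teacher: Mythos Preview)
Your proposal is correct. The paper does not supply a proof of this lemma at all; it is introduced with the phrase ``an obvious but valuable lemma'' and left unproved. Your argument is exactly the direct verification the paper takes for granted, and your closing remark that the statement can be read off from Proposition~\ref{P:autgroup} is also spot on.
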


Switching automorphisms of homogeneously signed graphs are not very interesting in themselves.  

\begin{prop}\label{P:homoaut}
The automorphisms and the switching automorphisms of a homogeneous signature, $+\Gamma$ or $-\Gamma$, are the automorphisms of the underlying graph.  
\end{prop}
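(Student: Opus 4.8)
The plan is to dispatch the two assertions separately, using the framework of Section~\ref{swaut}, and to lean on Proposition~\ref{P:autgroups} for the switching-automorphism half.

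The automorphism statement is essentially immediate. A graph automorphism $\alpha\in\Aut\Gamma$ automatically preserves any constant signature, so $\Aut(+\Gamma)=\Aut\Gamma=\Aut(-\Gamma)$. (This also follows from Lemma~\ref{L:stab+-}: $(+\Gamma)^+=\Gamma$ and $(-\Gamma)^-=\Gamma$, and every element of $\Aut\Gamma$ stabilizes $\Gamma$.) So I would state this in one line.

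For the switching automorphisms, write $\Sigma=\pm\Gamma$. The crux is to identify which switching functions $\zeta$ can occur in an element $\bar\zeta\alpha\in\SwAut\Sigma$, i.e.\ with $\alpha:\Sigma^\zeta\cong\Sigma$. Unwinding the definition of isomorphism, $\alpha:\Sigma^\zeta\cong\Sigma$ means $\sigma\big((vw)^\alpha\big)=\sigma^\zeta(vw)=\zeta(v)\sigma(vw)\zeta(w)$ for every edge $vw$; since $\sigma$ is constant we have $\sigma\big((vw)^\alpha\big)=\sigma(vw)$, so this forces $\zeta(v)\zeta(w)=+$ on every edge, which is precisely the condition $\zeta\in\fK_\Gamma$, i.e.\ $\bar\zeta=\bareps$ in $\Sw\Gamma$. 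Hence every switching automorphism of $\Sigma$ equals $\bareps\alpha=\alpha$ for some $\alpha\in\Aut\Gamma$, and conversely each such $\alpha$ (take $\zeta=\eps$) lies in $\SwAut\Sigma$; therefore $\SwAut\Sigma=\Aut\Gamma$. Alternatively, one can bypass the edge-by-edge check: by Proposition~\ref{P:autgroups}, $\barp_A$ is injective on $\SwAut\Sigma$, while by the first paragraph it already maps the subgroup $\Aut\Sigma\le\SwAut\Sigma$ onto $\Aut\Sigma=\Aut\Gamma$, so $\barp_A(\Aut\Sigma)=\barp_A(\SwAut\Sigma)$; injectivity then yields $\Aut\Sigma=\SwAut\Sigma$.

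I do not anticipate a real obstacle here; the statement is a sanity check on the definitions. The one point deserving care is the distinction between the easy isomorphism $\SwAut\Sigma\cong\Aut\Gamma$ induced by $\barp_A$ and the sharper claim that $\SwAut\Sigma$ is \emph{literally} the subgroup $\{\bareps\}\times\Aut\Gamma$ of $\SwAut\Gamma$ (equivalently, that it coincides with $\Aut\Sigma$); this is exactly what the observation $\zeta\in\fK_\Gamma$ — or, in the alternative route, the surjectivity of $\barp_A|_{\Aut\Sigma}$ — is there to supply.
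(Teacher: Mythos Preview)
Your proof is correct and, in its alternative route via Proposition~\ref{P:autgroups}, matches the paper's one-line argument: the paper simply says ``This follows at once from Lemma~\ref{L:stab+-},'' relying implicitly on the sandwich $\Aut\Sigma \leq \barp_A(\SwAut\Sigma) \leq \Aut\Gamma$ from Proposition~\ref{P:autgroups} together with $\Aut\Sigma=\Aut\Gamma$. Your direct edge-by-edge verification that $\zeta\in\fK_\Gamma$ is a pleasant alternative that makes the literal equality $\SwAut\Sigma=\{\bareps\}\times\Aut\Gamma$ explicit without appealing to the injectivity of $\barp_A$, but it is not needed once you invoke Proposition~\ref{P:autgroups}.
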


\begin{proof}
This follows at once from Lemma \ref{L:stab+-}.
\end{proof}

A heterogeneously signed graph, to the contrary, is likely to have switching automorphisms that are not automorphisms of the signed graph.  We see this in most, though not all, of the heterogeneous signatures of $P$.

Switching can change the automorphism group drastically.  Fortunately, the isomorphism type of the switching automorphism group is invariant under switching.  In addition, negations need not be considered separately.

\begin{prop}\label{P:negaut}
$\Aut(-\Sigma) = \Aut(\Sigma)$ and $\SwAut(-\Sigma) = \SwAut(\Sigma)$.  Also, $\SwAut(\Sigma^\zeta) \cong \SwAut(\Sigma)$ by the mapping $\bar\eta\gamma \mapsto \bar\zeta\bar\eta\gamma$.  
\end{prop}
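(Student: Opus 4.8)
The plan is to settle the three assertions separately, each reducing quickly to facts already established. The equality $\Aut(-\Sigma)=\Aut\Sigma$ is immediate: since $|{-\Sigma}|=|\Sigma|$ while $(-\Sigma)^+=\Sigma^-$ and $(-\Sigma)^-=\Sigma^+$, Proposition~\ref{P:autgroup} gives $\Aut(-\Sigma)=\Aut(-\Sigma)^+\cap\Aut(-\Sigma)^-=\Aut\Sigma^-\cap\Aut\Sigma^+=\Aut\Sigma$; equivalently, by Lemma~\ref{L:stab+-}, an automorphism of $|\Sigma|$ stabilizes $\Sigma^+$ precisely when it stabilizes the complementary edge set $\Sigma^-$.

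For $\SwAut(-\Sigma)=\SwAut\Sigma$ --- an equality of subgroups of the single group $\SwAut\Gamma$, since $|{-\Sigma}|=|\Sigma|$ --- the key point is that negation commutes with the entire action of $\signs^V\rtimes\Aut\Gamma$ on signatures of $\Gamma$. Indeed, for a switching function $\zeta$ both $(-\Sigma)^\zeta$ and $-(\Sigma^\zeta)$ assign the edge $vw$ the sign $-\zeta(v)\sigma(vw)\zeta(w)$, and for $\alpha\in\Aut\Gamma$ both $(-\Sigma)^\alpha$ and $-(\Sigma^\alpha)$ assign $v^\alpha w^\alpha$ the sign $-\sigma(vw)$; composing, $(-\Sigma)^{\zeta\gamma}=-(\Sigma^{\zeta\gamma})$ for every switching permutation $\zeta\gamma$. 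Hence $\Sigma^{\zeta\gamma}=\Sigma$ if and only if $(-\Sigma)^{\zeta\gamma}=-\Sigma$, and passing to the quotient $\SwAut\Gamma=(\signs^V\rtimes\Aut\Gamma)/(\fK_\Gamma\times\{\id\})$ yields the stated equality.

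Finally, for $\SwAut(\Sigma^\zeta)\cong\SwAut\Sigma$ I would use orbit--stabilizer. Regarding $\SwAut\Gamma$ as acting on signatures of $\Gamma$, the group $\SwAut\Sigma$ is the stabilizer of $\Sigma$ and $\SwAut(\Sigma^\zeta)$ is the stabilizer of $\Sigma^\zeta$, which lies in the same orbit as $\Sigma$; so the two stabilizers are conjugate via $\bar\zeta$, hence isomorphic. Concretely, $\bar\eta\gamma\in\SwAut(\Sigma^\zeta)$ means $(\Sigma^\zeta)^{\eta\gamma}=\Sigma^\zeta$, that is $\Sigma^{\zeta\eta\gamma}=\Sigma^\zeta$; switching once more by $\zeta$ and using $\zeta^2=\eps$ (so that $\bar\zeta$ is an involution), this is equivalent to $\Sigma^{\zeta\eta\gamma\zeta}=\Sigma$, i.e.\ to $\bar\zeta(\bar\eta\gamma)\bar\zeta^{-1}\in\SwAut\Sigma$; since conjugation by $\bar\zeta$ is a group isomorphism, this gives the claim, and rewriting the conjugate via the commutation law~\eqref{E:commutation} (in the form $\gamma\bar\zeta=\bar\zeta^{\gamma\inv}\gamma$) puts it into the displayed normal form. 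The substantive content is just the two ``commutes with negation'' identities and the orbit--stabilizer conjugation, so I expect no real obstacle; the one place needing care is that $\signs^V\rtimes\Aut\Gamma$ is non-abelian, so in the last step the factors must be moved past $\gamma$ in the correct order using~\eqref{E:commutation} (equivalently~\eqref{E:commutationset}) --- exactly the bookkeeping already modeled in the proof of Lemma~\ref{L:multinvset}.
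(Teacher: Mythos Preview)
Your argument for the first two assertions is correct and is essentially the paper's proof: the paper invokes Lemma~\ref{L:stab+-} for $\Aut(-\Sigma)=\Aut\Sigma$ and then observes, as you do, that the condition ``$\alpha:\Sigma^\zeta\cong\Sigma$'' is symmetric in $E^+$ and $E^-$, so it holds for $\Sigma$ exactly when it holds for $-\Sigma$. Your formulation ``negation commutes with the action of $\signs^V\rtimes\Aut\Gamma$'' is a clean way to say the same thing.

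For the third assertion your orbit--stabilizer/conjugation argument is the right idea and correctly proves $\SwAut(\Sigma^\zeta)\cong\SwAut\Sigma$; the paper's own proof here is a single line that simply records $(\Sigma^\zeta)^{\eta\gamma}=\Sigma^{\bar\zeta\bar\eta\gamma}$. There is, however, one loose end in your write-up: you assert that conjugation by $\bar\zeta$, after applying~\eqref{E:commutation}, ``puts it into the displayed normal form'' $\bar\eta\gamma\mapsto\bar\zeta\bar\eta\gamma$. It does not. Carrying out the commutation gives
\[
\bar\zeta(\bar\eta\gamma)\bar\zeta^{-1}
=\bar\zeta\,\bar\eta\,\bar\zeta^{\gamma^{-1}}\,\gamma,
\]
which differs from $\bar\zeta\bar\eta\gamma$ by the extra switching factor $\bar\zeta^{\gamma^{-1}}$. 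So conjugation is a valid isomorphism, but it is not literally the map $\bar\eta\gamma\mapsto\bar\zeta\bar\eta\gamma$ stated in the proposition (indeed, left multiplication by $\bar\zeta$ is not a group homomorphism, and one checks directly that $\bar\zeta\bar\eta\gamma$ need not even lie in $\SwAut\Sigma$). Your proof of the isomorphism stands; just drop the claim that it reproduces the displayed formula, or else note explicitly that the correct explicit isomorphism is conjugation, $\bar\eta\gamma\mapsto\bar\zeta\,\bar\eta\,\bar\zeta^{\gamma^{-1}}\,\gamma$.
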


\begin{proof}
The first statement is immediate from Lemma \ref{L:stab+-}.  

The second follows from considering how a switching automorphism acts.  $(\zeta,\alpha)$ is a switching automorphism of $\Sigma$ if and only if $\Sigma^\zeta \cong \Sigma$, the isomorphism being via $\alpha$.  This means that the same graph automorphism is an automorphism both of $(\Sigma^\zeta)^+ \cong \Sigma^+$ and of $(\Sigma^\zeta)^- \cong \Sigma^-$.  It follows that $\zeta\alpha$ is a switching automorphism of $-\Sigma$ under exactly the same conditions as it is a switching automorphism of $\Sigma$.

For the third statement we simply write down the action of $\bar\eta\alpha$:  it converts $\Sigma^\zeta$ to $(\Sigma^{\zeta})^{\eta\alpha} = \Sigma^{\bar\zeta\bar\eta\alpha}$.
\end{proof}

\begin{cor}\label{C:autpart}
Switching $\Sigma$ does not change the automorphisms in the switching automorphism group: $p_A(\SwAut\Sigma^\zeta) = p_A(\SwAut\Sigma)$ for any switching function $\zeta$.
\end{cor}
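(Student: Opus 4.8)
The plan is to identify $p_A(\SwAut\Sigma)$ with an object whose switching-invariance is self-evident, namely the set of underlying-graph automorphisms that preserve $\Sigma$ up to switching. First I would unwind the definitions. By definition $\bar\eta\gamma\in\SwAut\Sigma$ precisely when $\gamma\in\Aut\Gamma$ carries $\Sigma^\eta$ to $\Sigma$, i.e.\ when $\Sigma^{\eta\gamma}=\Sigma$ in the action of $\signs^V\rtimes\Aut\Gamma$ on signatures of $\Gamma$; relabelling both sides by $\gamma\inv$ turns this into $\Sigma^\eta=\Sigma^{\gamma\inv}$. Hence $\gamma\in p_A(\SwAut\Sigma)$ if and only if some switching function $\eta$ satisfies $\Sigma^\eta=\Sigma^{\gamma\inv}$, which says exactly that $\Sigma\sim\Sigma^{\gamma\inv}$. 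Since relabelling preserves switching equivalence---if $\Sigma_1\sim\Sigma_2$ then $\Sigma_1^\gamma\sim\Sigma_2^\gamma$, a consequence of the commutation law \eqref{E:commutation}---the condition $\Sigma\sim\Sigma^{\gamma\inv}$ is equivalent to $\Sigma^\gamma\sim\Sigma$, so I obtain the clean description
\[
p_A(\SwAut\Sigma)=\{\gamma\in\Aut\Gamma:\Sigma^\gamma\sim\Sigma\}.
\]

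The corollary is then immediate. For any $\gamma\in\Aut\Gamma$ we have $(\Sigma^\zeta)^\gamma\sim\Sigma^\gamma$ (again because relabelling respects switching equivalence and $\Sigma^\zeta\sim\Sigma$), and $\Sigma^\zeta\sim\Sigma$; so by transitivity and symmetry of $\sim$ the condition $(\Sigma^\zeta)^\gamma\sim\Sigma^\zeta$ is equivalent to $\Sigma^\gamma\sim\Sigma$. Therefore $p_A(\SwAut\Sigma^\zeta)$ and $p_A(\SwAut\Sigma)$ are the same subset of $\Aut\Gamma$. Equivalently, one may simply quote the third part of Proposition~\ref{P:negaut}: the isomorphism it supplies between $\SwAut\Sigma^\zeta$ and $\SwAut\Sigma$ changes only the switching-function coordinate of a switching automorphism (it multiplies by a pure-switching element, whose $\Aut\Gamma$-part is $\id$), hence it commutes with $p_A$ and carries $p_A(\SwAut\Sigma^\zeta)$ onto $p_A(\SwAut\Sigma)$.

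I do not expect a real obstacle here; the content is already present in Proposition~\ref{P:negaut}, and the only thing demanding care is the bookkeeping in the first step---correctly composing the action of $\signs^V\rtimes\Aut\Gamma$ and verifying that relabelling commutes with switching equivalence---after which the statement drops out formally. One minor point of hygiene: there is a notational overlap between the projection $p_A$ of $\signs^V\rtimes\Aut\Gamma$ and the projection $\barp_A$ of $\SwAut\Gamma$; since passing to the quotient by $\fK_\Gamma\times\{\id\}$ does not affect the $\Aut\Gamma$-factor, the two agree on $\SwAut\Sigma$ and the displayed description, and hence the conclusion, is unambiguous.
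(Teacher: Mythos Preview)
Your proposal is correct, and the short route you mention at the end---reading off the $\Aut\Gamma$-coordinate from the isomorphism $\bar\eta\gamma\mapsto\bar\zeta\bar\eta\gamma$ of Proposition~\ref{P:negaut}---is exactly the paper's proof, which consists of the single sentence ``Examine the mapping in Proposition~\ref{P:negaut}.'' Your longer first argument is also sound and has the bonus of establishing the intrinsic description $p_A(\SwAut\Sigma)=\{\gamma\in\Aut\Gamma:\Sigma^\gamma\sim\Sigma\}$, which the paper does not state explicitly; but for the corollary itself the one-line appeal to Proposition~\ref{P:negaut} suffices.
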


\begin{proof}
Examine the mapping in Proposition \ref{P:negaut}.
\end{proof}

Suppose $\zeta\alpha$ is a switching automorphism.  Since $(\Sigma^\zeta)^- \cong \Sigma^-$, the switching cannot change the number of negative edges.  As switching means negating the signs of edges in a cut, the cut must have equally many positive and negative edges.  Thus we have a necessary condition for a switching automorphism:

\begin{prop}\label{P:swautcut}
If $\zeta_X\alpha$ is a switching automorphism of $\Sigma$, then $\del X$ has equally many edges of each sign.
\hfill\qed
\end{prop}

%%%%%%
\subsubsection{Coset representation}\label{cosetrep}

We treat multiplication in a switching automorphism group $\SwAut\Sigma$ through the left cosets of $\Aut\Sigma$.  Choose a system $\bar R$ of representatives of the cosets and a system $R$ of representatives $\zeta_X\gamma_X \in \signs^V \rtimes \Aut\Gamma$ of the elements $\bar\zeta_X\gamma_X \in \bar R$.  
Then $\SwAut\Sigma$ is the disjoint union of the left $\bar R$-cosets of $\Aut\Sigma$:
\begin{equation}
\SwAut\Sigma = \bigcup_{\zeta_X\gamma_X \in R} \bar\zeta_X\gamma_X \Aut\Sigma .
\label{E:swautrep}
\end{equation}
Thus we have two levels of representation:  a switching automorphism $\bar\zeta_X\gamma_X$ representing each coset, and a switching permutation $\zeta_X\gamma_X$ to represent each $\bar\zeta_X\gamma_X \in \bar R$.  
Note that $\zeta_X$ and $\zeta_{X^c} = -\zeta_{X}$ are equally valid representatives of $\bar\zeta_X$; thus we can choose $X$ so that $|X|\leq\frac12|V|$.  

\begin{prop}\label{P:cosetsw}
The following three statements about two switching automorphisms, $\bar\zeta_X\gamma$ and $\bar\zeta_Y\xi \in \SwAut\Sigma$, are equivalent.
\begin{enumerate}[{\rm(i)}]
\item They belong to the same coset of $\Aut\Sigma$ in $\SwAut\Sigma$.
\item They have the same switching operation, $\bar\zeta_X = \bar\zeta_Y$.
\item $\gamma$ and $\xi$ belong to the same coset of $\Aut\Sigma$ in $\Aut\Gamma$.
\end{enumerate}
\end{prop}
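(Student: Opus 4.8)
The plan is to prove the cycle of implications (i) $\Rightarrow$ (ii) $\Rightarrow$ (iii) $\Rightarrow$ (i), using the coset decomposition \eqref{E:swautrep} and the multiplication formula \eqref{E:multset}. The underlying idea is simple: two switching automorphisms lie in the same left coset of $\Aut\Sigma$ exactly when their ``difference'' lies in $\Aut\Sigma$, and since elements of $\Aut\Sigma$ carry trivial switching operation (they are of the form $\bareps\alpha$), the switching operation is a complete invariant of the coset. The projection $\barp_A$ then transfers this statement to $\Aut\Gamma$.

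First I would prove (i) $\Rightarrow$ (ii). Suppose $\bar\zeta_X\gamma$ and $\bar\zeta_Y\xi$ lie in the same left coset of $\Aut\Sigma$, so $(\bar\zeta_X\gamma)\inv(\bar\zeta_Y\xi) \in \Aut\Sigma$. Every element of $\Aut\Sigma$ has the form $\bareps\alpha$ with $\alpha \in \Aut\Sigma$, i.e.\ it has trivial switching part. Using the inversion formula \eqref{E:invset}, $(\bar\zeta_X\gamma)\inv = \bar\zeta_{X^\gamma}\gamma\inv$, and then the product formula \eqref{E:multset} gives
$$
(\bar\zeta_X\gamma)\inv(\bar\zeta_Y\xi) = \bar\zeta_{X^\gamma}\gamma\inv \cdot \bar\zeta_Y\xi = \bar\zeta_{X^\gamma}\bar\zeta_{Y^\gamma} \cdot \gamma\inv\xi = \bar\zeta_{X^\gamma \oplus Y^\gamma}\cdot\gamma\inv\xi .
$$
For this to lie in $\Aut\Sigma$ its switching part must be trivial in $\Sw\Gamma$, i.e.\ $\bar\zeta_{X^\gamma\oplus Y^\gamma} = \bareps$, which by \eqref{E:commutationset} means $\bar\zeta_{(X\oplus Y)^\gamma} = \bareps$, hence $\bar\zeta_{X\oplus Y} = \bareps$ (applying $\gamma\inv$), hence $\bar\zeta_X = \bar\zeta_Y$. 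This gives (ii). Conversely, (ii) $\Rightarrow$ (i): if $\bar\zeta_X = \bar\zeta_Y$ then the same computation shows the switching part of $(\bar\zeta_X\gamma)\inv(\bar\zeta_Y\xi)$ is trivial, so this element equals $\gamma\inv\xi$ in $\SwAut\Gamma$; since both switching automorphisms lie in $\SwAut\Sigma$, so does their product $\gamma\inv\xi$, and having trivial switching part it lies in $\Aut\Sigma$ (by the monomorphism $\barp_A$ of Proposition \ref{P:autgroups} and the identification of $\Aut\Sigma$). Hence the two belong to the same coset.

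Next, (i) $\Leftrightarrow$ (iii) follows by applying the monomorphism $\barp_A\colon \SwAut\Sigma \to \Aut\Gamma$ of Proposition \ref{P:autgroups}. This homomorphism sends $\bar\zeta_X\gamma \mapsto \gamma$ and $\bar\zeta_Y\xi \mapsto \xi$, and it maps the subgroup $\Aut\Sigma$ of $\SwAut\Sigma$ isomorphically onto the subgroup $\Aut\Sigma$ of $\Aut\Gamma$. An injective homomorphism carries left cosets of a subgroup bijectively onto left cosets of its image, so $\bar\zeta_X\gamma$ and $\bar\zeta_Y\xi$ lie in the same coset of $\Aut\Sigma$ in $\SwAut\Sigma$ if and only if $\gamma$ and $\xi$ lie in the same coset of $\Aut\Sigma$ in $\Aut\Gamma$. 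This closes the cycle and proves all three statements equivalent.

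I do not expect a genuine obstacle here; the proof is essentially bookkeeping with the semidirect-product multiplication rules already established in Lemma \ref{L:multinvset} and the commutation law \eqref{E:commutationset}. The one point requiring care is the direction (i) $\Rightarrow$ (ii): one must correctly track how the automorphism $\gamma\inv$ acts on the switching set $X \oplus Y$ (via \eqref{E:commutationset}) and remember that passing to the quotient by $\fK_\Gamma$ is harmless because $\bar\zeta_{X\oplus Y} = \bareps$ in $\Sw\Gamma$ is exactly the condition that distinguishes elements of $\Aut\Sigma$ from general switching automorphisms. Everything else is formal.
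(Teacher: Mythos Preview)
Your proof is correct and uses essentially the same ingredients as the paper's: the semidirect-product multiplication and inversion formulas from Lemma~\ref{L:multinvset} together with the monomorphism $\barp_A$ from Proposition~\ref{P:autgroups}. The only difference is organizational---the paper proves (i)$\Rightarrow$(ii),(iii), then (ii)$\Rightarrow$(iii), then (iii)$\Rightarrow$(i),(ii), while you establish (i)$\Leftrightarrow$(ii) and (i)$\Leftrightarrow$(iii) separately (despite announcing a cycle (i)$\Rightarrow$(ii)$\Rightarrow$(iii)$\Rightarrow$(i) at the outset); either route is fine and the substance is identical.
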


\begin{proof}
The switching automorphisms are in the same coset $\iff$ there is an $\alpha \in \Aut\Sigma$ such that $\bar\zeta_X\gamma = \bar\zeta_Y\xi \alpha$.  Because $\SwAut\Sigma \subseteq \SwAut\Gamma$ and $\bar p_A$ is a monomorphism, this implies (iii) $\gamma = \xi\alpha \in \xi \Aut\Sigma$ and (ii) $\bar\zeta_X = \bar\zeta_Y$.

Now suppose (ii), i.e., there are cosets $\bar\zeta_X\gamma\Aut\Sigma$ and $\bar\zeta_X\xi\Aut\Sigma$ with the same switched set $X$.  Then $(\bar\zeta_X\gamma)\inv(\bar\zeta_X\xi) \in \Aut\Sigma$.  Simplifying, $(\bar\zeta_X\gamma)\inv(\bar\zeta_X\xi) = \gamma\inv \bar\zeta_X\inv \bar\zeta_X \xi = \gamma\inv \xi$.  Thus, $\gamma\inv \xi \in \Aut\Sigma$, which implies (iii).

Finally, suppose (iii), i.e., $\xi = \gamma\alpha$.  Then $\bar\zeta_X\gamma = \bar\zeta_Y\gamma\alpha$.   As in the first part of the proof, this implies (ii) $\bar\zeta_X = \bar\zeta_Y$ and consequently $\bar\zeta_Y\xi = \bar\zeta_X\gamma\alpha \in \bar\zeta_X\gamma \Aut\Sigma$, which is (i).
\end{proof}

\begin{cor}\label{C:cosetrepsw}
Each left coset representative $\bar\zeta_X\gamma_X \in \bar R$ has a different switching function $\bar\zeta_X$.
\end{cor}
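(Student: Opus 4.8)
The plan is to read this off directly from Proposition \ref{P:cosetsw}, of which the corollary is essentially the contrapositive. By construction $\bar R$ is a system of representatives of the \emph{distinct} left cosets of $\Aut\Sigma$ in $\SwAut\Sigma$, so if $\bar\zeta_X\gamma_X$ and $\bar\zeta_Y\gamma_Y$ are two entries of $\bar R$ that represent different cosets, then they fail condition (i) of Proposition \ref{P:cosetsw}: they do not belong to the same coset of $\Aut\Sigma$.

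Next I would invoke the equivalence (i) $\Leftrightarrow$ (ii) of that proposition. Since (ii) $\Rightarrow$ (i), its contrapositive gives $\lnot$(i) $\Rightarrow$ $\lnot$(ii); applied to our two representatives, this says $\bar\zeta_X \neq \bar\zeta_Y$ in $\Sw\Gamma$. Hence distinct coset representatives in $\bar R$ carry distinct switching functions, which is exactly the assertion of the corollary. One should phrase it carefully in terms of the two-level representation of Section \ref{cosetrep} — $\bar\zeta_X\gamma_X$ is the switching automorphism naming the coset, while $\zeta_X\gamma_X \in \signs^V \rtimes \Aut\Gamma$ is a chosen switching permutation naming $\bar\zeta_X\gamma_X$ — but the comparison in Proposition \ref{P:cosetsw}(ii) is precisely between the classes $\bar\zeta_X, \bar\zeta_Y \in \Sw\Gamma$, so no translation is needed.

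I do not expect any genuine obstacle here: the corollary is immediate once Proposition \ref{P:cosetsw} is in hand, and the only thing to be mildly careful about is not conflating $\zeta_X$ with its class $\bar\zeta_X$, and not conflating ``different switched sets $X, Y$'' with ``different classes $\bar\zeta_X, \bar\zeta_Y$'' — the statement and proof concern the latter throughout.
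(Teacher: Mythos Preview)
Your proposal is correct and matches the paper's approach: the paper states the corollary with no proof, treating it as immediate from Proposition~\ref{P:cosetsw}, and your argument via the contrapositive of (ii)~$\Rightarrow$~(i) is exactly the intended one-line justification.
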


By Corollary \ref{C:cosetrepsw}, $X$ determines $\gamma_X$; thus, we define 
$$\rho_X := \zeta_X\gamma_X := \text{the unique element of } R \text{ that has switching set } X.$$  
Also, define $\rho_{X^c} = \zeta_{X^c}\gamma_{X^c}$.  Then $\bar\rho_X = \bar\rho_{X^c}$ because $\bar\zeta_X = -\bar\zeta_X = \bar\zeta_{X^c}$.  
Thus, assuming $\Sigma$ is connected each $\bar\rho \in \bar R$ has two associated switching sets, $X$ and $X^c$, each of which serves equally well to represent $\bar\rho$.  (There are only two because $\fK_\Gamma = \{\pm\eps\}$.)  

The task now is to express the product of switching automorphisms in terms of coset representatives.  In the next subsection we do that for the more complicated signed Petersen examples by setting up multiplication tables for $R$, which combine with a general formula to give all products.  Here we explain the format of such tables and obtain the general product formula.

The product of representatives, $\zeta_X\gamma_X \cdot \zeta_Y\xi$, has the form $(\zeta_U\gamma_U)\nu$ where $\zeta_U\gamma_U \in R$ and the permutation $\nu \in \Aut\Sigma$ is a correction due to the fact that the product of representatives need not be a representative itself.  We need formulas for $U$ and $\nu$ in terms of $R$.  (The application to $\bar R$ consists merely of placing bars over the switching functions.)
For simplicity we assume $\Sigma$ is connected, to ensure that $\bar\zeta_U$ is represented only by $\zeta_U$ or $\zeta_{U^c} = -\zeta_U$.

\begin{prop}\label{P:genmult}
Assume $\Sigma = (\Gamma,\sigma)$ is connected.  For switching automorphisms $(\bar\zeta_X\gamma_X)\alpha$ and $(\bar\zeta_Y\gamma_Y)\beta$, where $\zeta_X\gamma_X, \zeta_Y\gamma_Y \in R$ and $\alpha, \beta \in \Aut\Sigma$, there is the multiplication formula 
\begin{equation}
(\zeta_X\gamma_X)\alpha \cdot (\zeta_Y\gamma_Y)\beta = (\pm\zeta_U\gamma_U) \nu \cdot \alpha\beta,
\label{E:genmult}
\end{equation}
where $U = X \oplus Y^{\alpha\inv\gamma_X\inv}$, $\gamma_U$ and the sign are determined by $\pm\zeta_U\gamma_U \in R$, and $\nu = \gamma_U\inv \gamma_X\gamma_Y^{\alpha\inv} \in \Aut\Sigma$.
\end{prop}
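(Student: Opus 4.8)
The plan is to compute the \emph{raw} product $(\zeta_X\gamma_X)\alpha\cdot(\zeta_Y\gamma_Y)\beta$ directly inside the group $\signs^V\rtimes\Aut\Gamma$ of switching permutations, using only the multiplication formula~\eqref{E:multset} of Lemma~\ref{L:multinvset} together with the commutation law~\eqref{E:commutationset}, and then reconcile the answer with the element of $R$ that has the same switching operation. First I would multiply the four factors from the inside out. Pushing $\alpha$ (which carries the trivial switching function) past $\zeta_Y\gamma_Y$ via~\eqref{E:multset} gives $\zeta_{Y^{\alpha\inv}}\cdot\alpha\gamma_Y$; multiplying on the left by $\zeta_X\gamma_X$ and applying~\eqref{E:multset} again collapses the two switching functions to $\zeta_X\zeta_{Y^{\alpha\inv\gamma_X\inv}}=\zeta_{X\oplus Y^{\alpha\inv\gamma_X\inv}}=\zeta_U$ and leaves permutation part $\gamma_X\alpha\gamma_Y$; finally appending $\beta$ (again with trivial switching function) does not touch the switching part. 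So the raw product is $\zeta_U\cdot\gamma_X\alpha\gamma_Y\beta$, which already identifies $U=X\oplus Y^{\alpha\inv\gamma_X\inv}$ and shows that the switching operation of the product is $\bar\zeta_U$.

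Next I would pass to $R$. The raw product, being a product of two switching automorphisms of $\Sigma$, lies in $\SwAut\Sigma$. Because $\Sigma$ is connected, $\fK_\Gamma=\{\pm\eps\}$, so the unique element of $R$ with switching operation $\bar\zeta_U$ is $\pm\zeta_U\gamma_U$, where $\gamma_U$ and the sign are read off according to whether $U$ or $U^c$ is the switching set actually used for that representative. By Proposition~\ref{P:cosetsw} (same switching operation $\iff$ same coset of $\Aut\Sigma$), the raw product lies in the coset $(\pm\bar\zeta_U\gamma_U)\Aut\Sigma$, hence equals $(\pm\zeta_U\gamma_U)\cdot\mu$ for a unique $\mu\in\Aut\Sigma$. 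Comparing permutation parts (the sign being immaterial since $\overline{-\zeta_U}=\bar\zeta_U$) yields $\gamma_U\mu=\gamma_X\alpha\gamma_Y\beta$, i.e. $\mu=\gamma_U\inv\gamma_X\alpha\gamma_Y\beta$. Using $\gamma_Y^{\alpha\inv}=\alpha\gamma_Y\alpha\inv$ (immediate from the conjugation reading of the action, \eqref{E:commutation}) I rewrite $\mu=(\gamma_U\inv\gamma_X\gamma_Y^{\alpha\inv})\,\alpha\beta=\nu\cdot\alpha\beta$ with $\nu=\gamma_U\inv\gamma_X\gamma_Y^{\alpha\inv}$, which is exactly~\eqref{E:genmult}. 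That $\nu\in\Aut\Sigma$ then comes for free: $\nu=\mu(\alpha\beta)\inv$ is a product of elements of $\Aut\Sigma$.

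I do not expect a genuine obstacle; the whole statement is bookkeeping in the semidirect product, each step being an application of~\eqref{E:multset}, \eqref{E:commutationset}, or~\eqref{E:commutation}. The one point that needs care is the two-level representative scheme: keeping $\zeta_U$ straight from the $R$-representative $\pm\zeta_U\gamma_U$ and the attendant sign, and making sure the correction $\nu$ emerges as the conjugate $\gamma_U\inv\gamma_X\gamma_Y^{\alpha\inv}$ and not some variant such as $\gamma_U\inv\gamma_X\gamma_Y^{\alpha}$ or its inverse. The connectedness hypothesis (so $\fK_\Gamma=\{\pm\eps\}$) is exactly what lets the sign ambiguity be absorbed cleanly rather than dragging the full kernel through the computation.
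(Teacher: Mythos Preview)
Your proof is correct and follows essentially the same approach as the paper: both compute the raw product in $\signs^V\rtimes\Aut\Gamma$ using the multiplication rule~\eqref{E:multset} and the commutation law, then identify the resulting switching function $\bar\zeta_U$ with the unique representative in $R$ (up to sign, using connectedness) and read off $\nu$ by comparing permutation parts. The only cosmetic difference is that the paper first conjugates $(\zeta_Y\gamma_Y)$ by $\alpha$ to pull $\alpha\beta$ all the way to the right before multiplying, whereas you multiply the factors sequentially and rewrite $\gamma_X\alpha\gamma_Y\beta$ as $\gamma_X\gamma_Y^{\alpha\inv}\alpha\beta$ at the end; these are the same computation reorganized.
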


\begin{proof}
Most of the proof is a calculation: 
\begin{align*}
(\zeta_{X} \gamma_X)\alpha \cdot (\zeta_Y \gamma_Y)\beta 
&= (\zeta_{X} \gamma_X) (\zeta_Y^{\alpha\inv}\gamma_Y^{\alpha\inv}) \cdot \alpha\beta \\
&= (\zeta_{X} \gamma_X) (\zeta_{Y^{\alpha\inv}}\gamma_Y^{\alpha\inv}) \cdot \alpha\beta \\
&= (\zeta_{X}\zeta_{Y^{\alpha\inv}}^{\gamma_X\inv}) (\gamma_X\gamma_Y^{\alpha\inv}) \cdot \alpha\beta \\
&= (\zeta_{X}\zeta_{Y^{\alpha\inv\gamma_X\inv}}) (\gamma_X\gamma_Y^{\alpha\inv}) \cdot \alpha\beta \\
&= \zeta_{X \oplus Y^{\alpha\inv\gamma_X\inv}} (\gamma_X\gamma_Y^{\alpha\inv}) \cdot \alpha\beta. 
\end{align*}
By Corollary \ref{C:cosetrepsw}, $\bar\zeta_U$ determines $\gamma_U \in \Aut\Gamma$ such that $\bar\zeta_U\gamma_U \in \bar R$; consequently, 
\begin{align*}
(\zeta_{X} \gamma_X)\alpha \cdot (\zeta_Y \gamma_Y)\beta 
&= (\pm\zeta_{U}\gamma_U) (\gamma_U\inv \gamma_X\gamma_Y^{\alpha\inv}) \cdot \alpha\beta .
\end{align*}
The sign is determined by whether $U := X \oplus Y^{\alpha\inv\gamma_X\inv}$ or its complement is the set $U'$ switched by the representative $\zeta_{U'}\gamma_X \in R$.  In the former case $U' = U$ and the sign is $+$, while in the latter case $U' = U^c$, which introduces the minus sign.  $U'$ must be one or the other because switching any other set will give some edge in a spanning tree a different sign.

The reason $\nu \in \Aut\Sigma$ is that, by the definition of $U$, $(\bar\zeta_{X} \gamma_X)\alpha \cdot (\bar\zeta_Y \gamma_Y)\beta \in \bar\zeta_U\gamma_U \Aut\Sigma$.  Thus, $\nu \cdot \alpha\beta \in \Aut\Sigma$, which entails that $\nu \in \Aut\Sigma$.
\end{proof}

Ideally, to use Equation \eqref{E:genmult} in conjunction with the multiplication table of $R$, one first finds $Y' := Y^{\alpha\inv}$, then looks up the product $(\pm\rho_U) \nu = \rho_X \rho_{Y'}$ in the table and combines with $\alpha\beta$.  (It is not necessary to find $Y^{\alpha\inv\gamma_X\inv}$ or $U$.)  
For this method to work, $R$ should be closed under conjugation by $\Aut\Sigma$.  
With $\Sigma = P_{3,2}$ and $P_{3,3}$ one can choose $R$ suitably; that is, so it is a union of orbits of $\Aut\Sigma$ acting on $\SwAut\Sigma$.  However, it may not always be possible to choose such an ideal system of representatives.

\begin{question}\label{Q:systemofreps}
Does a system of representatives $\bar R$ that is closed under conjugation by $\Aut\Sigma$ exist for every signed graph?  
\end{question}

A necessary condition for such a system is that, if $(\zeta_X\gamma_X)^\alpha$ is in the same coset as $\zeta_X\gamma_X$, then it must equal $\zeta_X\gamma_X$.  Thus $\gamma_X$ should commute with every automorphism $\alpha$ of $\Sigma$ for which $\bar\zeta_{X^\alpha} = \bar\zeta_X$ (equivalently when $\Sigma$ is connected, $X^\alpha = X$ or $X^c$).

%%=================
\subsection{Petersen automorphisms and switching automorphisms}\label{petaut}

Here we find the automorphism and switching automorphism groups of the six minimal signed Petersen graphs and their negations.  Between them they have six automorphism groups and six switching automorphism groups, but only four abstract types of switching automorphism group.  
By Proposition \ref{P:negaut} the negative signature, $(P,-\sigma)$, has exactly the same groups as does $(P,\sigma)$, and furthermore $\SwAut(P_{3,3}) \cong \SwAut(-P)$ and $\SwAut(P_{2,3}) \cong \SwAut(-P_1)$.  By Proposition \ref{P:homoaut} and \ref{P:homoaut}, both groups of $+P$ and $-P$ equal $\Aut(P) = \fS_5$.  
Thus, as abstract groups we have five automorphism groups and three switching automorphism groups to discover; but there are five switching automorphism groups to find as explicit subgroups of $\SwAut P$.

\begin{thm}\label{T:aut}
The abstract automorphism and switching automorphism groups of the minimal signed Petersen graphs and their negatives are as shown in Table \ref{Tb:aut}.  As subgroups of $\SwAut P$ they are shown in Table \ref{Tb:autexact}.
\end{thm}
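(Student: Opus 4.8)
\medskip

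\noindent\textbf{Proof proposal for Theorem~\ref{T:aut}.}

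The plan is to go through the six minimal signatures one at a time, first finding $\Aut\Sigma$ and then $\SwAut\Sigma$ --- both as an abstract group and as an explicit subgroup of $\SwAut P$ --- and to lean on Propositions~\ref{P:homoaut} and~\ref{P:negaut} to dispose of the homogeneous signatures $\pm P$ and to carry every answer over to the negations $-P_1,\,-P_{2,2},\,-P_{2,3},\,-P_{3,2},\,-P_{3,3}$ at no cost. This leaves five automorphism groups and at most four switching automorphism groups to be computed.

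\smallskip

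\noindent\emph{Automorphism groups.} By Lemma~\ref{L:stab+-}, $\Aut\Sigma$ is the setwise stabilizer in $\Aut P = \fS_{\5}$ of the negative edge set $E^-$. For $+P$ this is all of $\fS_5$ by Proposition~\ref{P:homoaut}. For each heterogeneous minimal signature $E^-$ is one of the matchings $M_1, M_{2,2}, M_{2,3}, M_{3,2}, M_{3,3}$, and the structure theory of Section~\ref{structure} identifies its stabilizer explicitly as a group of permutations of $\5$: for instance the single edge $v_{ij}v_{kl}$ of $P_1$ is stabilized exactly by the permutations that fix the fifth point and preserve $\{\{i,j\},\{k,l\}\}$, a group of order $8$; the principal hexagon carrying $E^-(P_{3,2})$ has the stabilizer found in Section~\ref{hex}; and $E^-(P_{3,3}) = E(P\setm X_m)$ is stabilized by the stabilizer of the independent $4$-set $X_m$. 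This yields the $\Aut$ columns of Tables~\ref{Tb:aut} and~\ref{Tb:autexact}.

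\smallskip

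\noindent\emph{Switching automorphism groups.} I would compute these in two stages. For the abstract group, Proposition~\ref{P:autgroups} makes $\barp_A$ embed $\SwAut\Sigma$ into $\Aut P = \fS_5$, and Lemma~\ref{L:switching} pins down the image: a graph automorphism $\gamma$ lies in $\barp_A(\SwAut\Sigma)$ exactly when $\gamma$ carries $\Sigma$ to a switching of $\Sigma$, i.e.\ when $\gamma$ stabilizes the set $\cC^-(\Sigma)$ of negative circles. Hence $\SwAut\Sigma$ is isomorphic to the setwise stabilizer of $\cC^-(\Sigma)$ in $\fS_5$, which one reads off from the circle data: the negative-pentagon and negative-hexagon counts of Theorem~\ref{T:circ}, the transitivity facts for $\fS_5$ on $P$ from Section~\ref{structure}, and, if the $5$- and $6$-circles are not already decisive, the negative $8$- and $9$-circles. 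In particular $\SwAut(P_{3,3}) \cong \SwAut(-P) \cong \fS_5$ is immediate, and the classes of $P_{2,2}$, $P_{3,2}$ and $P_1$ give the three smaller stabilizers (one of which turns out to equal $\Aut\Sigma$, the ``trivial'' case). To realize $\SwAut\Sigma$ explicitly inside $\SwAut P = \Sw P \rtimes \Aut P$, note that since $\barp_A$ is injective each $\gamma$ in that stabilizer has a unique preimage $\bar\zeta_X\gamma$, and by Lemma~\ref{L:switching} the switching operation $\bar\zeta_X$ is the unique one (up to $\fK_P = \{\pm\bareps\}$) with $\gamma\colon \Sigma^{\zeta_X} \cong \Sigma$; Proposition~\ref{P:swautcut} shortcuts the search for $X$ by forcing $\del X$ to have equally many positive and negative edges. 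So $\SwAut\Sigma = \{\bar\zeta_X\gamma\}$ with the switching part a cocycle over the stabilizer, and its products obey Lemma~\ref{L:multinvset} and Proposition~\ref{P:genmult}; for $P_{3,2}$ and $P_{3,3}$ one may pick the representative system $R$ closed under conjugation by $\Aut\Sigma$, so the whole group is recorded in a compact multiplication table of $R$, matching Table~\ref{Tb:autexact} and confirming its abstract type against Table~\ref{Tb:aut}.

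\smallskip

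\noindent\emph{The main obstacle.} Most cases are quick. The work concentrates in the signature with the largest switching automorphism group and the least transparent realization inside $\SwAut P$ --- the one the introduction warns ``takes pages,'' namely $P_{3,3}$ (equivalently $-P$), with $P_{3,2}$ the next hardest. Although Proposition~\ref{P:autgroups} forces the abstract group there to be $\fS_5$, exhibiting the precise order-$120$ ``twisted'' copy of $\fS_5$ inside the order-$2^9\cdot 5!$ group $\SwAut P$ --- computing, for each of the $120$ automorphisms, the attached switching set, presenting a workable system of coset representatives, and checking via Proposition~\ref{P:genmult} that the listed generators produce exactly this subgroup and no more --- is a long bookkeeping exercise rather than a matter of a new idea. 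That volume, not conceptual depth, is the difficulty.
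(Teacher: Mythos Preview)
Your overall architecture is sound and close to the paper's, but two points deserve correction.

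First, your proposed route to $\barp_A(\SwAut\Sigma)$ --- as the stabilizer in $\fS_5$ of the negative-circle set $\cC^-(\Sigma)$ --- is valid in principle but is not what the paper does. The paper instead proves a structural lemma (Lemma~\ref{L:cutswitch}) classifying, for each minimal signature, exactly which vertex sets $X$ can have $\del X$ balanced between the two signs; this sharply limits the possible switchings appearing in a switching automorphism to three explicit shapes ($X=V(e_0)$, $X=V(Q)$ for a $3$-path $Q$, or $X=N[v]$), each tied to a specific signature. From there the paper enumerates the switching sets first and then reads off the accompanying permutation $\gamma_X$, rather than finding the permutations first. Your circle-stabilizer approach would work, but the cut-classification lemma is what makes the explicit realization inside $\SwAut P$ tractable and is the key device you are missing.

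Second, you have the hard case backwards. The signature that ``takes pages'' is $P_{3,2}$, not $P_{3,3}$. For $P_{3,3}$ one has $\Aut P_{3,3}=\fS_{\{i,j,k,l\}}\cong\fS_4$ of index~$5$ in $\fS_5$, and the four nontrivial cosets are represented uniformly by $\bar\zeta_{N[v_{im}]}(im)$ for $i\in\{1,2,3,4\}$; the multiplication table fits in a few lines. For $P_{3,2}$, by contrast, $\Aut P_{3,2}\cong\fS_3$ has index~$10$ in $\SwAut P_{3,2}\cong\fA_5$, and the nontrivial coset representatives come in two different $\Aut$-orbits (three with $|X|=2$ from case~(a), six with $|X|=4$ from case~(b) of Lemma~\ref{L:cutswitch}), whose products require several tables to record. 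A smaller slip in the same vein: the stabilizer of the hexagon $H_{lm}$ from Section~\ref{hex} is $\fS_{\{l,m\}}\times\fS_{\{i,j,k\}}$ of order~$12$, not $\Aut P_{3,2}$; one must further stabilize the particular $M_{3,2}$ inside $H_{lm}$, which cuts down to the even permutations $(\fS_{\{l,m\}}\times\fS_{\{i,j,k\}})^+\cong\fS_3$.
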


\begin{table}[htb]
\begin{center}
\begin{tabular}{|c||c|c|}
\hline
\vstrut{15pt}$(P,\sigma)$	&$\Aut(P,\sigma)$	&$\SwAut(P,\sigma)$	
\\[3pt]	
\hline
\vstrut{15pt}$+P$, $-P$	&$\fS_5$	&$\fS_5$	
\\[3pt]
\vstrut{15pt}$P_1$, $-P_1$	&$\fD_4$	&$\fD_4$	
\\[3pt]
\vstrut{15pt}$P_{2,2}$, $-P_{2,2}$	&$\fZ_2$	&$\fV_4$	
\\[6pt]
\vstrut{15pt}$P_{2,3}$, $-P_{2,3}$	&$\fD_4$	&$\fD_4$	
\\[3pt]
\vstrut{15pt}$P_{3,2}$, $-P_{3,2}$	&$\fS_3$	&$\fA_5$	
\\[3pt]
\vstrut{15pt}$P_{3,3}$, $-P_{3,3}$	&$\fS_4$		&$\fS_5$	
\\[3pt]
\hline
\end{tabular}
\end{center}
\bigskip
\caption{The automorphism and switching automorphism groups of the minimal signed Petersens and their negatives.  $\fS_k$, $\fA_k$, $\fD_k$, and $\fZ_k$ are the symmetric and alternating groups on $k$ letters, the dihedral group of a $k$-gon, and the cyclic group of order $k$.  $\fV_4$ is the Klein four-group.}
\label{Tb:aut}
\end{table}
\begin{table}[htb]
\begin{center}
\begin{tabular}{|l||c|c|c|c|}
\hline
\vstrut{15pt}$(P,\sigma)$	&$\Aut(P,\sigma)$	&$\SwAut(P,\sigma)$	
\\[3pt]	
\hline\hline
\vstrut{16pt}$+P$, $-P$	
&$\fS_{\5}$	
&$\{\bareps\}\times\fS_{\5}$	
\\[5pt]
\hline
\vstrut{20pt}\parbox{4cm}{$P_1$, $-P_1$ with \\ $E^-=\{v_{ij}v_{kl}\}$}	
&$\big\langle(ij),(ikjl)\big\rangle$	
&$\{\bareps\}\times\big\langle(ij),(ikjl)\big\rangle$	
\\[10pt]
\hline
\vstrut{20pt}\parbox{4cm}{$P_{2,2}$, $-P_{2,2}$ with \\ $E^-=\{v_{il}v_{jm},v_{kl}v_{im}\}$}	
&$\big\langle(jk)(lm)\big\rangle$	
&$\big\langle \bareps(jk)(lm), \zeta_{\{{jm},{kl}\}}(jl)(km) \big\rangle$	
\\[10pt]
\hline
\vstrut{20pt}\parbox{4cm}{$P_{2,3}$, $-P_{2,3}$ with \\ $E^-=\{v_{ik}v_{jl},v_{il}v_{jk}\}$}	
&$\big\langle(ij),(ikjl)\big\rangle$	
&$\{\bareps\}\times\big\langle(ij),(ikjl)\big\rangle$	
\\[10pt]
\hline
\vstrut{20pt}\parbox{5.2cm}{$P_{3,2}$, $-P_{3,2}$ with \\ $E^-=\{v_{il}v_{jm},v_{kl}v_{im},v_{jl}v_{km}\}$}
&$\big(\fS_{\{i,j,k\}}\times\fS_{\{l,m\}}\big)^+$	
&See Equation \eqref {E:P32set} %$\fA_{\5}$
\\[10pt]
\hline
\vstrut{20pt}\parbox{4.8cm}{$P_{3,3}$, $-P_{3,3}$ with \\ $E^-=\{v_{ij}v_{kl},v_{ik}v_{jl},v_{il}v_{jk}\}$}
&$\fS_{\{i,j,k,l\}}$	
&See Equation \eqref {E:P33setgen} %&$\fS_{\5}$	
\\[10pt]
\hline
\end{tabular}
\end{center}
\bigskip
\caption{The exact groups corresponding to specific negative edge sets.  $i,j,k,l,m$ are the five elements of $\5$, in any order.  For $\fG \leq \fS_n$, $\fG^+$ denotes the set of even permutations in $\fG$.  $\zeta_X$ is the switching function that switches $X \subseteq V$ (with $ij$ denoting vertex $v_{ij}$ for readability).}
\label{Tb:autexact}
\end{table}

We preface the proof with a structural lemma.

\begin{lem}\label{L:cutswitch}
Let $(P,\sigma)$ be a minimal signature of $P$.  Suppose $\del X$ is a cut that contains equally many edges of each sign, as when $X$ is switched in a switching automorphism.  Then 
\begin{enumerate}[{\rm(a)}]
\item $|\del X|=4$, $X=V(e_0)$ for some edge $e_0$, and $(P,\sigma) = P_{k,2}$ for $k=2$ or $3$, or 
\item $|\del X|=6$, $X = V(Q)$ for a path $Q$ of order $4$, and $(P,\sigma) = P_{3,2}$, or 
\item $|\del X|=6$, $X = N[v]$ for some vertex $v$, and $(P,\sigma) = P_{3,3}$.
\end{enumerate}
\end{lem}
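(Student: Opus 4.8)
The plan is to first use the $3$-regularity, girth $5$, and edge-connectivity $3$ of $P$, together with minimality, to cut $X$ down to a very short list of combinatorial shapes, and then to match each shape against the six possible types of $(P,\sigma)$ supplied by Theorem~\ref{T:fr}. To begin: since $(P,\sigma)$ is minimal, Theorem~\ref{T:fr} (and Corollary~\ref{C:min}) shows $(P,\sigma)$ is isomorphic to one of $+P,P_1,P_{2,2},P_{2,3},P_{3,2},P_{3,3}$; hence $|E^-|\le3$ and, by Corollary~\ref{C:cubicfr}, $E^-$ is a matching. Put $n=|\del X|$, and assume, as we may, that $\eset\ne X\ne V$. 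Because the two signs occur equally often in $\del X$, $n$ is even; because $P$ is $3$-edge-connected, $n\ge4$; because the $n/2$ negative edges of $\del X$ lie in the matching $E^-$, $n/2\le|E^-|\le3$. Thus $n\in\{4,6\}$. The degree-sum identity $3|X|=2\,|E(X)|+n$ (with $E(X)$ the set of edges inside $X$) forces $|X|\equiv n\pmod2$, so $|X|$ is even, and on replacing $X$ by $X^c$ if necessary we may take $|X|\in\{2,4\}$. Girth $5$ now pins down the induced subgraph on $X$: if $n=4$ then $|X|=2$ and $|E(X)|=1$, so $X=V(e_0)$ for an edge $e_0$; if $n=6$ then either $|X|=2$ with $X$ an independent pair, or $|X|=4$ with $|E(X)|=3$, so $X$ induces a tree on four vertices --- a path $Q$ of order $4$ ($X=V(Q)$) or a star $K_{1,3}$ ($X=N[v]$).

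Next I would dispose of the independent-pair configuration and read off case~(a). If $X=\{u,w\}$ is independent with $n=6$, then $E^-\subseteq\del X$ since both are sets of three edges; but $\del X$ consists of the edges meeting $\{u,w\}$, and the matching $E^-$ has at most one edge at $u$ and one at $w$, so at most two of its edges can lie in $\del X$ --- contradiction. When $X=V(e_0)$ the two negative edges of $\del X$ form a matching with one edge at each end of $e_0$; together with $e_0$ these lie on a path of length $3$, so they are at distance $2$ from each other. Hence $E^-$ is a matching containing a distance-$2$ pair, and among the six minimal types only $P_{2,2}$ and $P_{3,2}$, i.e.\ $P_{k,2}$ with $k\in\{2,3\}$, have this property (the negative pairs of $P_{2,3}$ and $P_{3,3}$ lie at distance $3$, and $+P,P_1$ have fewer than two negative edges): this is case~(a). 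In the two surviving $n=6$ configurations $|E^-|=3$, so $(P,\sigma)$ is $P_{3,2}$ or $P_{3,3}$.

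It then remains to separate $P_{3,2}$ from $P_{3,3}$ in the $|X|=4$ configurations. In each of these signatures $E^-$ is a perfect matching of three edges on a distinguished $6$-vertex set $W$ --- $W=V(H_v)$ for $P_{3,2}$ and $W=V\setm X_m$ for $P_{3,3}$ --- so for a $4$-set $X$ to cut all three negative edges, $X\cap W$ must be a transversal of this matching (one endpoint from each edge); then $|X\cap W|=3$ and the fourth vertex $t$ of $X$ lies in $W^c$, which is $N[v]$ for $P_{3,2}$ and $X_m$ for $P_{3,3}$. For $P_{3,3}$: two distinct $2$-subsets of $\5\setm m$ that are not a matched pair share an element, so the corresponding vertices are non-adjacent in $P$; hence every such transversal is independent in $P$, and since $|E(X)|=3$ all three edges of $X$ must issue from $t$, forcing $X=N[t]$, a $K_{1,3}$ --- never a path of order $4$. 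For $P_{3,2}$: no transversal of the matching of the three alternate edges of $H_v$ is a vertex neighborhood, since of its eight transversals only the two consisting of alternate vertices of $H_v$ are independent in $P$, and each of those lies inside a maximum independent set $X_m$, so by the structure of $P$ (\S\ref{structure}: an independent $3$-set is a vertex neighborhood or a subset of some $X_m$, exclusively) it is not a neighborhood; therefore $\del N[v]$ in $P_{3,2}$ never splits evenly. Hence the path configuration forces $(P,\sigma)=P_{3,2}$ (case~(b)) and the $N[v]$ configuration forces $(P,\sigma)=P_{3,3}$ (case~(c)).

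The genuinely delicate step is the last one --- telling the two three-negative-edge signatures apart through the shape of an evenly split cut when $|X|=4$. The clean way through is to reduce it to the purely structural question of which $3$-element subsets of the relevant $6$-set $W$ are neighborhoods of vertices of $P$, and then to invoke the dichotomy of \S\ref{structure} (an independent $3$-set of $P$ is a vertex neighborhood or a subset of some $X_m$, and these are mutually exclusive). Everything preceding it --- the bound $n\in\{4,6\}$, the parity/degree-sum count, the girth-$5$ classification of the subgraph induced on $X$, and the distance-$2$ computation underlying case~(a) --- is routine bookkeeping once the short list of shapes of $X$ is in hand.
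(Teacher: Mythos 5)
Your proof is correct in substance and reaches the classification by the same basic strategy as the paper: use $3$-regularity, girth $5$, and the bound $|E^-|\le 3$ from Theorem \ref{T:fr} to force $X$ (up to complementation) to induce a single edge, an independent pair, a path of order $4$, or a star $K_{1,3}$; kill the independent pair using the fact that $E^-$ is a matching; and then match the surviving shapes to signature types. The paper organizes the bookkeeping slightly differently (it splits on whether $P{:}X$ is connected and derives $|\del X|=|X|+2$ for a tree, where you use the degree-sum identity together with the count of negative edges in $\del X$), but this is the same content. Where you genuinely diverge is the final step separating $P_{3,2}$ from $P_{3,3}$: the paper works on the edge side, observing that $\del V(Q)$ contains no three edges pairwise at distance $3$ (so $E^-$ cannot be an $M_{3,3}$) while $\del N[v]$ contains no three pairwise at distance $2$ (so $E^-$ cannot be an $M_{3,2}$); you work on the vertex side, noting that $X$ must consist of a transversal $T$ of the $3$-edge matching $E^-$ on its $6$-vertex support $W$ plus one vertex $t\notin W$, and then invoking the independent-$3$-set dichotomy of Section \ref{structure}. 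Both are valid; yours makes the reason for the two shapes more transparent (for $P_{3,3}$ every transversal is independent, so the induced tree must be a star; for $P_{3,2}$ no transversal is a neighborhood, so it cannot be), at the cost of needing the exclusivity of the dichotomy, which the paper states only as an alternative and which you should verify in a line ($N(v_{lm})$ consists of the three pairs from a $3$-set, which share no common index, so it lies in no $X_n$).

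One step needs tightening in the $P_{3,2}$ star-exclusion. From ``no transversal is a vertex neighborhood'' you conclude that $\del N[u]$ never splits evenly, but this tacitly assumes that the center $u$ of the star is the vertex $t\in W^c$, so that the transversal is exactly $N(u)$. Add the one-line check that the center cannot lie in $T$: if $u\in T$, then the matched partner $u'$ of $u$ across the negative hexagon edge at $u$ satisfies $u'\in N(u)\subseteq X$ and $u'\in W$, hence $u'\in X\cap W=T$; but a transversal contains exactly one endpoint of each negative edge, a contradiction. With that sentence the case analysis is complete and the argument is sound.
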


Note that Lemma \ref{L:cutswitch} does not apply to a switching automorphism in which there is no switching.

\begin{proof}
Suppose the subgraph $P{:}X$ induced on $X$, with edge set $E{:}X$, is disconnected; then $\del X$ is the disjoint union of two or more cuts, hence it has at least 6 edges.  As $(P,\sigma)$ is minimal, there are no more than three negative edges; hence $|\del X| = 6$ and $X$ consists of two nonadjacent vertices.  Then $\del X$ does not contain three independent edges; by Corollary \ref{C:cubicfr} this case is impossible.

Therefore $P{:}X$ is connected, so $|\del X| = 3|X| - 2|E{:}X|$.  As $|\del X|$ is even, this implies $|X|$ is even, so we may assume $|X| \leq 4$.  Then $P{:}X$ is acyclic; being connected, it is a tree.  Consequently $|E{:}X| = |X|-1$ and we deduce that $|\del X| = |X|+2$.  

If the cut has four edges, $|X| = 2$; so $X = V(e_0)$ for some edge $e_0$ and $\del X$ consists of the four edges adjacent to $e_0$.  Amongst them the largest distance is 2.  It follows that $(P,\sigma) = P_{k,2}$ as in (a).

If the cut has six edges, $|X|=4$.  $P{:}X$ is a tree which may be either a path $Q$ of length $4$ or a vertex star.  If it is a path $Q$, then $X=V(Q)$ and the six edges of $\del X$ contain no three edges at distance 3 from one another.  Hence, $d=2$ and we have (b).  If $P{:}X$ is a vertex star, $X=N[v]$ for some $v \in V$.  
In this case $d=3$, for it is not possible to choose three edges in $\del X$ whose distances are all 2.  Thus, we are in case (c).
\end{proof}

\begin{proof}[Proof of Theorem \ref{T:aut}]
In the course of the proof we establish many important facts about the groups, in particular multiplication tables for the most complicated ones, $\SwAut P_{2,3}$ and $\SwAut P_{3,3}$.  The proofs of these facts could not easily be separated from that of the main theorem so it seemed best, though unconventional, to incorporate them all including their formal statements into one large proof.  In order to keep the reader (and the author) from getting lost, the proof is divided into subsections treating different aspects.

The groups of $+P$ follow from Proposition \ref{P:homoaut}.  We take up the others in turn.  

%%%%%%
\subsubsection{Signatures of type $P_1$.}  \label{typeP1}

The automorphism group of $P_1$ is the stabilizer of an edge in $\Aut P$.  Suppose $P_1$ to have negative edge $e=v_{ij}v_{kl}$; i.e., it is $P_{\{e\}}$.  An automorphism $\alpha$ can preserve the vertices; then it is in the four-element group generated by $(ij)$ and $(kl)$.  Or, it can exchange the vertices; this is done, for instance, by a permutation $(ikjl)$.  The group $\big\langle(ij),(kl),(ikjl)\big\rangle$ is the dihedral group of a square with corners labelled, in circular order, $i,k,j,l$; it is generated by $(ij)$ and $(ikjl)$.

Due to Proposition \ref{P:swautcut} and the fact that no cut in $P$ has fewer than three edges, there are no switching automorphisms of $P_1$ other than its automorphisms.

%%%%%%
\subsubsection{Signatures of type $P_{2,d}$.}  \label{typeP2d}

We write $P_{\ef}$ for $P_{2,d}$ with negative edges $e$ and $f$.  An automorphism of $P_{\ef}$ preserves $\ef$.  

In $P_{2,3}$ there is a unique third edge $g$ at distance $3$ from $e$ and $f$ forming a matching $M_{3(m)}$.  As any edge in $M_{3(m)}$ determines the whole matching, an automorphism of $P$ that stabilizes $\ef$ must fix $g$, and vice versa.  Thus, $\Aut P_{\ef} = \Aut P_{\{g\}}$.  

In $P_{2,2} = P_{\ef}$, $e$ and $f$ are at distance 2 in a hexagon $H_{lm}$.  The hexagon is uniquely determined by $\ef$.  There is a unique edge $g$ at distance 2 from $e$ and $f$ in $H$.  Let $e=v_{il}v_{jm}$, $f=v_{kl}v_{im}$, and $g=v_{jl}v_{km}$.  
Since an automorphism $\alpha$ of $P_{\ef}$ preserves distance, the adjacent vertices $v_{jm}, v_{kl}$ of $E^-$ are either fixed or interchanged, and the remaining vertices $v_{il}, v_{im}$ are also fixed or interchanged.  This implies that $i$ is fixed under $\alpha$, so $\alpha$, if not the identity, transposes $l$ and $m$, and consequently $\alpha = \id$ or $(jk)(lm)$.  Hence, $\Aut P_{\ef} = \big\langle(jk)(lm)\big\rangle \cong \fZ_2$, the cyclic group of order 2.

Now let us examine possible switching automorphisms $\zeta_X\gamma$ of $P_{\ef} = P_{2,d}$ for $d=2,3$.  By Lemma \ref{L:cutswitch} $|\del X| = 4$ and $P_{\ef} = P_{2,2}$.  It follows that a nontrivial switching of $P_{2,3}$ cannot be isomorphic to $P_{2,3}$, so $\SwAut P_{2,3} = \{\bareps\}\times\Aut P_{2,3}$.  There is a nontrivial switching by $X = \{v_{jm},v_{kl}\}$ forming new negative edges $e' = v_{jm}v_{ik}$ and $f' = v_{kl}v_{im}$, so $\gamma$ must fix $i$ and transpose either $j,l$ and $k,m$ or else $j,m$ and $k,l$.  Thus, $\gamma = (jl)(km)$ or $(jm)(kl)$.  We conclude that 
$$
\SwAut P_{\ef} = \{ \bareps\,\id,\ \bareps(jk)(lm),\ \zeta_{\{{jm},{kl}\}}(jl)(km),\ \zeta_{\{{jm},{kl}\}}(jm)(kl) \}.
$$

%%%%%%
\subsubsection{Signatures of type $P_{3,d}$.}  \label{typeP3d}

The next groups are those of $P_{3,d} = P_{\efg}$ for $d=2,3$.  For each distance $d$ choose the same negative edges $e,f,g$ as in the previous analyses of $P_{2,d}$.  
In $P_{3,2}$ the negative edges lie in the hexagon $H = H_{lm} = P \setm N[v_{lm}]$.  
In $P_{3,3}$ the negative edges are $e=v_{ij}v_{kl},\ f=v_{ik}v_{jl},\ g=v_{il}v_{jk}$, so $E^- = M_{3(m)}$.

We begin with the automorphism groups.

To determine $\Aut P_{3,2}$, note that the hexagon containing $e,f,g$ is $H_v = P \setm N[v]$ for $v=v_{lm}$.  An automorphism $\alpha$ of $P_{\efg}$ must fix $v$ and thus must fix or exchange $l$ and $m$.  It can also permute the other indices $i,j,k$.  Suppose $\alpha$ fixes $l$ and $m$.  As the vertices of $H_v$, in order, are $v_{li},v_{mk},v_{lj},v_{im},v_{lk},v_{jm}$, with vertex indices alternating between $l$ and $m$, and as $\alpha$ must preserve the set $\efg$, it must rotate $H_v$ by a multiple of one-third of a full rotation.  That means it permutes $i,j,k$ cyclically, so it is a power of $(ijk)$.  Now suppose $\alpha$ exchanges $l$ with $m$.  Then it reverses the direction of $H_v$, so in order to leave $\efg$ invariant it must fix one of $e,f,g$ and one of $i,j,k$; thus, $\alpha = (ij)(lm)$, $(ik)(lm)$, or $(jk)(lm)$.  The conclusion is that $\alpha$ is an even permutation of $\5$ and is an element of $\fS_{\{l,m\}} \times \fS_{\{i,j,k\}}$.  Thus, $\Aut P_{\efg} = (\fS_{\{l,m\}}\times\fS_{\{i,j,k\}})^+$, the superscript $+$ denoting even permutations only.  As the factor $(lm)$ is predictable by evenness given the $\fS_{\{i,j,k\}}$ part of an automorphism, $\Aut P_{3,2} \cong \fS_3$.

The automorphism group of $P_{3,3}$ is determined by the fact that the negative edge set $\efg = M_{3(m)}$.  An automorphism permutes $e,f,g$, whence it permutes $i,j,k,l$ arbitrarily and fixes $m$.  Thus, $\Aut P_{\efg} = \fS_{\{i,j,k,l\}} \cong \fS_4$.

Now we examine the switching automorphism groups.  We assume $P_{3,d} = P_{\efg}$ switches by $X$ to $P_{\{e',f',g'\}}$.  Lemma \ref{L:cutswitch} presents three cases to consider.

\emph{In Case (a)}, $d=2$ so the three negative edges lie in the hexagon $H_v$.  As switching changes two edges from negative to positive, this resembles the case of $P_{2,2}$, but now there are three possible switching sets $X$, namely $X = \{w,x\}$ for each positive edge $wx$ in $H_v$.  
\begin{figure}[hbt]
\begin{center}
\includegraphics[scale=.6]{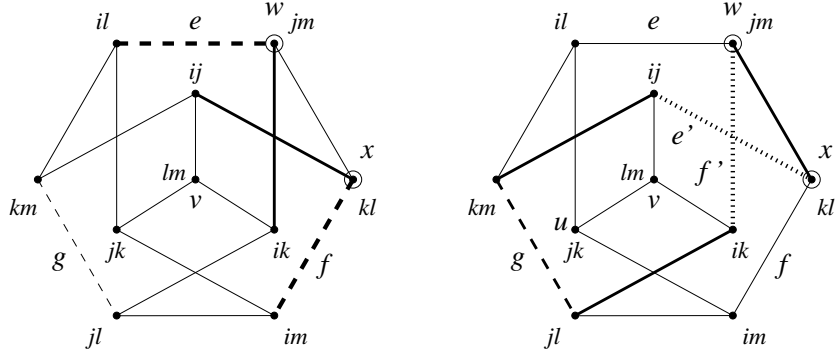}
\caption{Switching two positively adjacent vertices (circled) on the principal hexagon in $P_{3,2}$ for Case (a).  Left:  $P_{\efg}$, before switching.  The principal hexagon $H_v$ is the outer hexagon.  Heavy lines indicate the cut $\del X$.  Right:  $P_{\efg}^X$, after switching $X = \{v_{jm},v_{kl}\}$.  Heavy lines indicate the new principal hexagon $H_u$ and dotted lines mark the two new negative edges.}
\label{F:sw2}
\end{center}
\end{figure}
Switching $X$ gives a $P_{3,2}$ with negative edge set $\{e',f',g'\} \subseteq H_u$.  The vertex $u$ can be described in terms of the 3-edge path in $H_v$ centered upon $wx$:  there is a unique pentagon containing this path, and $u$ is its one vertex not in $H_v$.  It follows that each different edge $wx$ yields a different principal hexagon after switching.  
Now suppose $X = \{v_{jm},v_{kl}\}$; then $u = v_{jk}$ and $P_{\efg}^X$ is isomorphic to $P_{\efg}$ by the even permutation $\gamma_X:=(jm)(kl)$.  Similarly, each of the other two switching sets $X$ gives $P_{\efg}^X$ which is isomorphic to $P_{\efg}$ by an even permutation.  
It follows from Proposition \ref{P:cosetsw} that each different $\zeta_X\gamma_X$ belongs to a different left coset of $\Aut P_{\efg}$ in $\SwAut P_{\efg}$.  Thus we have three cosets besides $\Aut P_{\efg}$ itself.  

The three coset representatives are a single orbit of the action of $\Aut P_{\efg}$ on $\SwAut P_{\efg}$.  To prove this we may point to symmetry or we may compute the action on a coset representative $\bar\zeta_X\gamma_X$, or rather on the switching permutation $\zeta_X\gamma_X$.  
The argument from symmetry is that each switching automorphism is obtained from one of them, say $\bar\zeta_{jm,kl} (jm)(kl)$, by rotating Figure \ref{F:sw2} through $120^\circ$ once or twice.  The rotation is carried out by the permutation $(kji)$.  As for a double transposition, say $(jk)(lm) \in \Aut P_{\efg}$, applying it reflects the figure across a line parallel to $v_{jk}v_{lm}$ and therefore does not change the switching automorphism $\bar\zeta_{jm,kl} (jm)(kl)$; the other double transpositions similarly fix the other switching automorphisms.
For the computational proof, first, the action of powers of $(ijk)$:
\begin{equation}
\begin{aligned}{}
[ \zeta_{jm,kl} (jm)(kl) ] ^{(ijk)} &= \zeta_{km,il} (km)(il), \\
[ \zeta_{jm,kl} (jm)(kl) ]^{(kji)} &= \zeta_{im,jl} (im)(jl).
\label{E:P32aorbit}
\end{aligned}
\end{equation}
This shows the chosen representatives are in one orbit.  Next, the action of $(jk)(lm)$:
\begin{align*}
[\zeta_{jm,kl}(jm)(kl)]^{(jk)(lm)} &= \zeta_{jm,kl}(jm)(kl).
\end{align*}
As $\Aut P_{\efg} = \langle(ijk)\rangle \cup (jk)(lm) \langle(ijk)\rangle$, this proves there are no other switching permutations in the orbit.  The computational proof gives the slightly stronger result that the switching permutations, not only the switching automorphisms, are a whole orbit of $\Aut P_{\efg}$.

\emph{In Case (b)}, $d=2$ and $P{:}X$ is a path $wxyz$.  Again $e,f,g$ are alternating edges on $H_v$.  

Given $H_v$, we need to know which sets $X = \{w,x,y,z\}$ can be.  To determine that, we reverse the question; we fix $X$ and ask which hexagons $H_v$ can be.  (There are 60 paths of length 3, but as $\Aut P$ is transitive on them, there is only one type.)  Since $e,f,g \in \del X$, it must be true that $|H_v \cap \del X| = 3$.  One finds by checking every vertex of $P$ that only two hexagons $H_v$ have this property; the vertices $v$ are the neighbors of $x$ and $y$ in $X^c$.  By choice of notation, we may assume $v$ is adjacent to $y$.

\begin{figure}[hbt]
\begin{center}
\includegraphics[scale=.6]{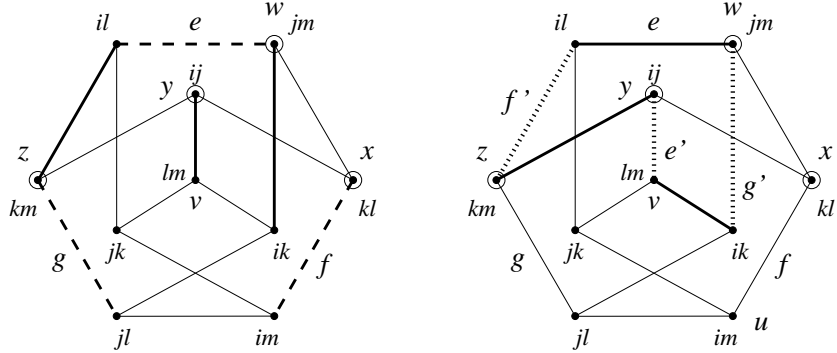}
\caption{Switching the four vertices of a path in $P_{3,2}$ for Case (b).  Left:  $P_{\efg}$, before switching $X = \{w,x,y,z\}$.  The principal hexagon $H_v$ is the outer hexagon.  Heavy lines indicate the cut $\del X$.  Right:  $P_{\efg}^X$, after switching.  Heavy lines indicate the new principal hexagon $H_u$ and dotted lines mark the new negative edges.}
\label{F:sw4path}
\end{center}
\end{figure}
Now we can describe the relationship between the path $wxyz$ and $P_{\efg}$.  The path begins with the positive edge $wx$ of $H_{v}$, which is followed by $y \notin V(H_v)$, and then ends at $z$ in $H_v$.  The original negative edges $e,f,g$ are the alternating triple in $H_v$ that excludes $wx$.  The vertex $y$ is the neighbor of $x$ along $H_v$.
Thus, there are six possible paths for $wxyz$.  Once we choose $w$ and $x$, the rest is determined.  

After switching $X=\{w,x,y,z\}$ we again have three negative edges on a hexagon; this hexagon is $H_u$ where $u$ is the neighbor of $x$ along $H_v$.  $H_v \cap H_u$ is the 2-edge path from $w$ to $z$ in $H_v$; the first edge is one of $e,f,g$ and hence positive (after switching), while the next, call it $e'$, is negative.  The negative edge set of $P_{\efg}^X$ consists of $e'$ and the edges $f', g'$ at distance 2 from it along $H_u$.  Thus, $P_{\efg}$ switches to $P_{\{e',f',g'\}}$.

To find a permutation $\alpha$ by which $P_{\efg}^X$ is isomorphic to $P_{\efg}$, we need only examine one case, because each path $wxyz$ maps to any other, $w'x'y'z'$, by the unique automorphism of $P_{\efg}$ which carries $(w,x)$ to $(w',x')$.  
Let $e=v_{il}v_{jm}$, $f=v_{kl}v_{im}$, and $g=v_{jl}v_{km}$, so $v = v_{lm}$, and let the path $wxyz = v_{jm}v_{kl}v_{ij}v_{km}$.  Then $u = v_{im}$.  The even permutation $(ilm)$ is one choice for the desired isomorphism.  The switching automorphism of $P_{\efg}$ is $\bar\zeta_{\{{jm},{kl},{ij},{mk}\}} (ilm)$.  (In the notation of Section \ref{cosetrep} this is $\bar\rho_{\{{jm},{kl},{ij},{mk}\}}$.)
 
These six switching automorphisms are another orbit of $\Aut P_{\efg}$ acting on $\SwAut P_{\efg}$.  The proof by symmetry is contained in the observation that the six paths are automorphic under the automorphism group.  We show the computational proof in order to demonstrate that the switching permutations are also a single orbit of $\Aut P_{\efg}$.  We compute the nontrivial actions on one of the switching permutations:
\begin{equation}
\begin{aligned}{}
[ \zeta_{\{{jm},{kl},{ij},{mk}\}} (ilm) ]^{(ijk)} &= \zeta_{\{{km},{il},{jk},{mi}\}} (jlm), \\
[ \zeta_{\{{jm},{kl},{ij},{mk}\}} (ilm) ]^{(kji)} &= \zeta_{\{{im},{jl},{ki},{mj}\}} (klm), \\
[ \zeta_{\{{jm},{kl},{ij},{mk}\}} (ilm) ]^{(jk)(lm)} &= \zeta_{\{{kl},{jm},{ik},{lj}\}} (mli), \\
[ \zeta_{\{{jm},{kl},{ij},{mk}\}} (ilm) ]^{(ij)(lm)} &= \zeta_{\{{il},{km},{ji},{lk}\}} (mlj), \\
[ \zeta_{\{{jm},{kl},{ij},{mk}\}} (ilm) ]^{(ik)(lm)} &= \zeta_{\{{jl},{im},{kj},{li}\}} (mlk).
\label{E:P32borbit}
\end{aligned}
\end{equation}
This displays all six switching permutations of $P_{\efg}$.

\emph{In Case (c)},  $X = N[v]$, $\efg = M_{3(m)} := \big\{v_{ij}v_{kl}: \{i,j,k,l\} = \5 \setm m \big\}$, and $\Aut P_{\efg} = \fS_{\5\setm m}$.  The complement of $V(M_{3(m)})$ is $X_m$.  Any vertex in $X_m$ can be taken as $v$; choosing $v = v_{im}$, $\zeta_{N[v_{im}]} (im)$ is a switching automorphism of $P_{3,3}$.  This is the only way to switch $P_{\efg}$ for a switching automorphism, so 
\begin{equation}
\SwAut P_{3,3} = \fS_{\5\setm m} \ \cup \bigcup_{i \in \5 \setm m} \zeta_{N[v_{im}]} (im) \fS_{\5\setm m}.
\label{E:P33setgen}
\end{equation}
Therefore, we may rewrite Equation \eqref{E:P33setgen} as
$$
\SwAut P_{3,3} \ = \bigcup_{\alpha \in \fS_{\5\setm m}} [\zeta_{N[v_{im}]} (im)]^\alpha \ \fS_{\5\setm m}.
$$
(where $i \neq m$ is fixed).

\begin{figure}[hbt]
\begin{center}
\includegraphics[scale=.6]{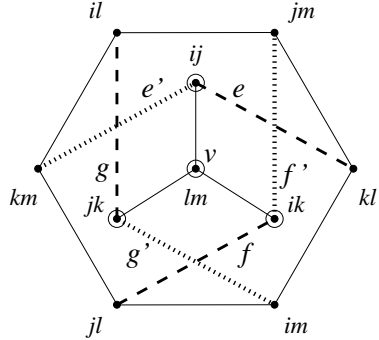}
\caption{Switching the closed neighborhood $X=N[v]$ of a totally positive vertex in $P_{3,3}$ for Case (c).  The original negative edges $e,f,g$ are dashed; the new ones after switching, $e',f',g'$, are dotted.  The heavy lines show the cut $\del X$.}
\label{F:sw4star}
\end{center}
\end{figure}

Much as with $P_{3,2}$, the switching permutations and switching automorphisms of $P_{3,3}$ are whole orbits of the actions of $\Aut P_{\efg}$ on switching permutations and switching automorphisms of $P$.  This is obvious both pictorially, as $\Aut P_{\efg}$ permutes $\5 \setm \{m\}$ and therefore $X_m$, and computationally, as $N[v_{im}]^\alpha = N[v_{i^\alpha m}]$ so $[ \zeta_{N[v_{im}]} (im) ]^\alpha = \zeta_{N[v_{i^\alpha m}]} (i^\alpha m)$.

%%%%%%
\subsubsection{The structure of $\SwAut P_{3,2}$.}  \label{structureP32}

A switching automorphism of $P_{3,2}$, if not an automorphism, falls under Case (a) or Case (b).  Thus, 
\begin{equation}
\begin{aligned}
\SwAut P_{3,2} = \Aut P_{3,2} \ &\cup \bigcup_{\lambda \in \langle(ijk)\rangle} [\zeta_{\{jm,kl\}} (jm)(kl)]^\lambda \Aut P_{3,2} \\ 
&\cup \bigcup_{\mu \in \Aut P_{3,2}} [\zeta_{\{{jm},{kl},{ij},{mk}\}} (il)(jk)]^\mu \Aut P_{3,2} .
\end{aligned}
\label{E:P32setgen}
\end{equation}
That tells us the set $\SwAut P_{3,2}$ but to know the group we need the rules for multiplication and for how to determine, for each permutation in $p_A(\SwAut P_{3,2})$, which switching must be done before the permutation to get a switching automorphism.

The description is simplified if we fix the $P_{3,2}$ by choosing a specific negative edge set.  Our choice for $E^-$ is $\{e=v_{14}v_{25},\ f=v_{34}v_{15},\ g=v_{24}v_{35}\} \subseteq H_{45}$.  (That is, we are setting $i,j,k = 1,2,3$ and $l,m = 4,5$.)  

To describe the group we fix two switching sets, 
$$
W := \{v_{15},v_{24}\} \quad \text{ and } \quad Z := \{v_{34},v_{25},v_{13},v_{24}\},
$$  
and corresponding switching permutations, 
$$
\upsilon_W := \zeta_W (15)(24) \quad \text{ and } \quad \omega_Z := \zeta_Z (145).
$$  
($W$ is the $X = \{v_{im},v_{jl}\}$ of Case (a) and $Z$ is the $X = \{v_{jm}, v_{kl}, v_{ij}, v_{km}\}$ of Case (b).  The permutation part is what was called $\gamma_W$ and $\gamma_Z$; as before, it is partly arbitrary since it is determined only up to right multiplication by elements of $\Aut P_{3,2}$.)  
For the systems of representatives in Proposition \ref{P:genmult} we choose
$$
R := \{ \epsilon\,\id \} \cup \{ \upsilon_W^\lambda : \lambda \in \langle(123)\rangle \} \cup \{ \omega_Z^\mu : \mu\in\Aut P_{3,2} \} ,
$$
which we may do because the coset representatives constitute three orbits of $\Aut P_{3,2}$ as shown in Section \ref{typeP3d}, 
and $\bar R := \{\bar\zeta_X\gamma_X : \zeta_X\gamma_X \in R\}$.  As in Cases (a) and (b), $W^{(12)(45)} = W$ and $\rho_X^\mu = \rho_{X^\mu}$ for any $\rho_X = \zeta_X\gamma_X \in R$ and $\mu \in \Aut P_{3,2}$, so $R$ is closed under the action of $\Aut P_{3,2}$.  The sets $W^\mu$ and $Z^\mu$ are found in Table \ref{Tb:P32WX}.  

\begin{table}[hbdp]
\begin{center}
\begin{tabular}{|c||c|c||c|c|}
\hline
\vstrut{15pt}
$\lambda$	&$W^\mu$	&$\upsilon_{W^\mu}$	&$Z^\mu$	&$\omega_{Z^\mu}$	
\\[6pt]
\hline\hline
\vstrut{18pt}
$\id$	&	$W = \{v_{15},v_{24}\}$	&$\zeta_{W} (15)(24)$	
&$Z = \{v_{34},v_{25},v_{13},v_{24}\}$	&$\zeta_{Z} (145)$	
\\[6pt]
\hline
\vstrut{18pt}
$(123)$	&$\{v_{25},v_{34}\}$	&$\zeta_{W}^{(123)} (25)(34)$	
&$\{v_{14},v_{35},v_{12},v_{34}\}$	&$\zeta_{Z}^{(123)} (245)$	
\\[6pt]
\hline
\vstrut{18pt}
$(321)$	&$\{v_{35},v_{14}\}$	&$\zeta_{W}^{(321)} (35)(14)$	
&$\{v_{24},v_{15},v_{23},v_{14}\}$	&$\zeta_{Z}^{(321)} (345)$	
\\[6pt]
\hline\hline
\vstrut{18pt}
$(12)(45)$	&$\{v_{15},v_{24}\}$	&$\upsilon_{W}$	
&$\{v_{35},v_{14},v_{23},v_{15}\}$	&$\zeta_{Z}^{(12)(45)} (542)$	
\\[6pt]
\hline
\vstrut{18pt}
$(23)(45)$	&$\{v_{14},v_{35}\}$	&$\upsilon_{W}^{(321)}$	
&$\{v_{25},v_{34},v_{12},v_{35}\}$	&$\zeta_{Z}^{(23)(45)} (541)$	
\\[6pt]
\hline
\vstrut{18pt}
$(13)(45)$	&$\{v_{34},v_{25}\}$	&$\upsilon_{W}^{(123)}$	
&$\{v_{15},v_{24},v_{13},v_{25}\}$	&$\zeta_{Z}^{(13)(45)} (543)$	
\\[6pt]
\hline
\end{tabular}
\bigskip
\end{center}
\caption{The transforms $W^\mu$ and $Z^\mu$ and associated switching automorphisms, for $\mu \in \Aut P_{3,2}$.  Recall that $\upsilon_W^{\mu}=\upsilon_{W^\mu}$ and $\omega_Z^{\mu}=\omega_{Z^\mu}$.}
\label{Tb:P32WX}
\end{table}

The switching set $X$ associated with $\bar\rho \in \bar R$ is uniquely determined if we insist that $|X| \leq 4$.  (That is how we chose $R$.)  
Thus, we are representing $\SwAut P_{3,2}$ as the disjoint union of the left $\bar R$-cosets of $\Aut P_{3,2}$:
\begin{equation}
\SwAut P_{3,2} = \bigcup_{\zeta_X\gamma_X \in R} \bar\zeta_X\gamma_X \Aut P_{3,2} .
\label{E:P32set}
\end{equation}
Note again that $\zeta_X$ and $-\zeta_X = \zeta_{X^c}$ are equally valid representatives of $\bar\zeta_X$; this fact helps to calculate and interpret the multiplication tables we provide for $\SwAut P_{3,2}$.

The product $(\bar\zeta_X\gamma_X)\alpha \cdot (\bar\zeta_Y\gamma_Y)\beta$ of any two switching automorphisms is completely specified by Proposition \ref{P:genmult}.  To find the product follow this procedure:
\begin{enumerate}
\item Set $Y' = Y^{\alpha\inv}$ and $\zeta_{Y'} = \zeta_Y^{\alpha\inv}$.  Then $\zeta_{Y'}\gamma_{Y'}$ is an element of $R$ because $R$ is closed under the action of $\Aut P_{3,2}$.
\item Calculate $U = X \oplus Y'$ or $(X \oplus Y')^c$, the former if $|X \oplus Y'| \leq 4$ and the latter otherwise.
\item Find $\zeta_U\gamma_U \in R$ to determine $\gamma_U$.
\item The product is $(\bar\zeta_{U}\gamma_U) (\gamma_U\inv \gamma_X\gamma_{Y'}) \cdot \alpha\beta$, which lies in the coset $(\bar\zeta_{U}\gamma_U) \Aut P_{3,2}$.
\item The product $(\zeta_X\gamma_X)\alpha \cdot (\zeta_Y\gamma_Y)\beta$ in $\signs \times \Aut P$, if desired, is $\pm (\zeta_{U}\gamma_U) (\gamma_U\inv \gamma_X\gamma_{Y'}) \cdot \alpha\beta$, with the positive sign if $|X \oplus Y'| \leq 4$ and the negative sign if not.
\end{enumerate}
Steps (2) and (3) can be combined by using Tables \ref{Tb:P32mult-upsi-omega}--\ref{Tb:P32mult-omega}, which give the products of elements $\zeta_X\gamma_X, \ \zeta_{Y'}\gamma_{Y'} \in R$.

% TABLES FOR $P_{3,2}$.
\begin{table}[ht]
\begin{center}
\begin{tabular}{|c||c|c|c|}
\hline
\vstrut{15pt}
$\cdot$	
&\hbox to 2cm{\hfill$\upsilon_W$\hfill}	
&\hbox to 2cm{\hfill$\upsilon_W^{(123)}$\hfill}	
&\hbox to 2cm{\hfill$\upsilon_W^{(321)}$\hfill}	
\\[6pt]
\hline\hline
\vstrut{18pt}
$\upsilon_W$	
&$\bareps\,\id$	
&$\omega_Z^{(321)} (123)$
&$\omega_Z^{(13)(45)} (321)$
\\[6pt]
\hline
\vstrut{18pt}
$\upsilon_W^{(123)}$
&$\omega_Z^{(12)(45)} (321)$
&$\bareps\,\id$	
&$\omega_Z (123)$
\\[6pt]
\hline
\vstrut{18pt}
$\upsilon_W^{(321)}$	
&$\omega_Z^{(123)} (123)$
&$\omega_Z^{(23)(45)} (321)$
&$\bareps\,\id$	
\\[6pt]
\hline
\end{tabular}
\bigskip

\begin{tabular}{|c||c|c|c||c|c|c|}
\hline
\vstrut{15pt}
$\cdot$ %Left $\cdot$ Top	
&$\omega_Z$	&$\omega_Z^{(123)}$	&$\omega_Z^{(321)}$	
\\[6pt]
\hline\hline
\vstrut{18pt}
$\upsilon_W$	
&$\omega_Z^{(12)(45)}$	
&$\omega_Z^{(123)} (12)(45)$	
&$\upsilon_W^{(123)} (321)$	
\\[6pt]
\hline
\vstrut{18pt}
$\upsilon_W^{(123)}$	
&$\upsilon_W^{(321)} (321)$	
&$\omega_Z^{(13)(45)}$	
&$\omega_Z^{(321)} (23)(45)$	
\\[6pt]
\hline
\vstrut{18pt}
$\upsilon_W^{(321)}$	
&$\omega_Z (13)(45)$	
&$\upsilon_W (321)$	
&$\omega_Z^{(13)(45)}$	
\\[6pt]
\hline
\end{tabular}
\bigskip

\begin{tabular}{|c||c|c|c|}
\hline
\vstrut{15pt}
$\cdot$ %Left $\cdot$ Top	
&$\omega_Z^{(12)(45)}$	&$\omega_Z^{(23)(45)}$	&$\omega_Z^{(13)(45)}$	
\\[6pt]
\hline\hline
\vstrut{18pt}
$\upsilon_W$	
&$\omega_Z$	
&$-\omega_Z^{(23)(45)} (12)(45)$	
&$\upsilon_W^{(23)(45)} (123)$	
\\[6pt]
\hline
\vstrut{18pt}
$\upsilon_W^{(123)}$	
&$-\omega_Z^{(12)(45)} (23)(45)$	
&$\upsilon_W (123)$	
&$\omega_Z$	
\\[6pt]
\hline
\vstrut{18pt}
$\upsilon_W^{(321)}$	
&$\upsilon_W^{(13)(45)} (123)$	
&$\omega_Z^{(321)}$	
&$-\omega_Z^{(13)(45)} (13)(45)$	
\\[6pt]
\hline
\end{tabular}
\end{center}
\bigskip
\caption{The multiplication table of elements of $\signs \times \Aut P$ that represent coset representatives of the second kind times the 
second and third kinds in $\SwAut P_{3,2}$.}
\label{Tb:P32mult-upsi-omega}
\end{table}

\begin{table}[hbt]
\begin{center}
\begin{tabular}{|c||c|c|c||c|c|c|}
\hline
\vstrut{15pt}
$\cdot$ %Left $\cdot$ Top	
&$\upsilon_W$	&$\upsilon_W^{(123)}$	&$\upsilon_W^{(321)}$	
\\[6pt]
\hline\hline
\vstrut{18pt}
$\omega_Z$	
&$-\omega_Z^{(12)(45)} (12)(45)$	
&$\upsilon_W^{(123)} (321)$	
&$\omega_Z^{(13)(45)}$	
\\[6pt]
\hline
\vstrut{18pt}
$\omega_Z^{(123)}$
&$\omega_Z^{(23)(45)}$	
&$-\omega_Z^{(13)(45)} (23)(45)$	
&$\upsilon_W^{(321)} (321)$	
\\[6pt]
\hline
\vstrut{18pt}
$\omega_Z^{(321)}$	
&$\upsilon_W (321)$	
&$\omega_Z^{(12)(45)}$
&$-\omega_Z^{(23)(45)} (13)(45)$	
\\[6pt]
\hline\hline
\vstrut{18pt}
$\omega_Z^{(12)(45)}$	
&$-\omega_Z (12)(45)$	
&$\omega_Z^{(321)}$	
&$\upsilon_W^{(321)} (123)$	
\\[6pt]
\hline
\vstrut{18pt}
$\omega_Z^{(23)(45)}$	
&$\omega_Z^{(123)}$	
&$\upsilon_W^{(123)} (123)$	
&$-\omega_Z^{(321)} (13)(45)$	
\\[6pt]
\hline
\vstrut{18pt}
$\omega_Z^{(13)(45)}$	
&$\upsilon_W (123)$	
&$-\omega_Z^{(123)} (23)(45)$	
&$\omega_Z$	
\\[6pt]
\hline
\end{tabular}
\bigskip
\end{center}
\caption{The multiplication table of elements of $\signs \times \Aut P$ that represent coset representatives of the third kind times the second kind in $\SwAut P_{3,2}$.}
\label{Tb:P32mult-omega-upsi}
\end{table}

\begin{table}[hbt]
\begin{center}
\begin{tabular}{|c||c|c|c||c|c|c|}
\hline
\vstrut{15pt}
$\cdot$ %Left $\cdot$ Top	
&$\omega_Z$	&$\omega_Z^{(123)}$	&$\omega_Z^{(321)}$	
%&$\omega_Z^{(12)(45)}$	&$\omega_Z^{(23)(45)}$	&$\omega_Z^{(13)(45)}$	
\\[6pt]
\hline\hline
\vstrut{18pt}
$\omega_Z$	
&$\omega_Z^{(23)(45)}$	&$\upsilon_W$	&$-\upsilon_W^{(321)}(13)(45)$	
%&$-\omega_Z^{(123)}(12)(45)$	&$\bareps\,\id$	&$-\omega_Z^{(321)}(13)(45)$	
\\[6pt]
\hline
\vstrut{18pt}
$\omega_Z^{(123)}$
&$-\upsilon_W(12)(45)$		&$\omega_Z^{(12)(45)}$	&$\upsilon_W^{(123)}$	
%&$\bareps\,\id$	&$-\omega_Z(12)(45)$	&$-\omega_Z^{(321)}(23)(45)$	
\\[6pt]
\hline
\vstrut{18pt}
$\omega_Z^{(321)}$	
&$\upsilon_W^{(321)}$	&$-\upsilon_W^{(123)}(23)(45)$	&$\omega_Z^{(13)(45)}$	
%&$-\omega_Z^{(123)}(23)(45)$	&$-\omega_Z(23)(45)$	&$\bareps\,\id$	
\\[6pt]
\hline\hline
\vstrut{18pt}
$\omega_Z^{(12)(45)}$	
&$-\omega_Z^{(23)(45)}(12)(45)$	&$\bareps\,\id$	&$-\omega_Z^{(13)(45)}(23)(45)$	
%&$\omega_Z^{(123)}$	&$\upsilon_W^{(12)(45)}$	&$-\upsilon_W^{(13)(45)}(23)(45)$		
\\[6pt]
\hline
\vstrut{18pt}
$\omega_Z^{(23)(45)}$	
&$\bareps\,\id$	&$\omega_Z^{(12)(45)}(12)(45)$	&$\omega_Z^{(13)(45)}(13)(45)$	
%&$-\upsilon_W^{(12)(45)}(12)(45)$	&$\omega_Z$	&$\upsilon_W^{(13)(45)}$	
\\[6pt]
\hline
\vstrut{18pt}
$\omega_Z^{(13)(45)}$	
&$-\omega_Z^{(23)(45)}(13)(45)$	&$-\omega_Z^{(12)(45)}(23)(45)$	&$\bareps\,\id$	
%&$\upsilon_W^{(12)(45)}$	&$-\upsilon_W^{(23)(45)}(13)(45)$		&$\omega_Z^{(321)}$	
\\[6pt]
\hline
\end{tabular}
\bigskip

\begin{tabular}{|c||c|c|c||c|c|c|}
\hline
\vstrut{15pt}
$\cdot$ %Left $\cdot$ Top	
&$\omega_Z^{(12)(45)}$	&$\omega_Z^{(23)(45)}$	&$\omega_Z^{(13)(45)}$	
\\[6pt]
\hline\hline
\vstrut{18pt}
$\omega_Z$	
&$-\omega_Z^{(123)}(12)(45)$	&$\bareps\,\id$	&$-\omega_Z^{(321)}(13)(45)$	
\\[6pt]
\hline
\vstrut{18pt}
$\omega_Z^{(123)}$
&$\bareps\,\id$	&$-\omega_Z(12)(45)$	&$-\omega_Z^{(321)}(23)(45)$	
\\[6pt]
\hline
\vstrut{18pt}
$\omega_Z^{(321)}$	
&$-\omega_Z^{(123)}(23)(45)$	&$-\omega_Z(23)(45)$	&$\bareps\,\id$	
\\[6pt]
\hline\hline
\vstrut{18pt}
$\omega_Z^{(12)(45)}$	
&$\omega_Z^{(123)}$	&$\upsilon_W^{(12)(45)}$	&$-\upsilon_W^{(13)(45)}(23)(45)$		
\\[6pt]
\hline
\vstrut{18pt}
$\omega_Z^{(23)(45)}$	
&$-\upsilon_W^{(12)(45)}(12)(45)$	&$\omega_Z$	&$\upsilon_W^{(13)(45)}$	
\\[6pt]
\hline
\vstrut{18pt}
$\omega_Z^{(13)(45)}$	
&$\upsilon_W^{(12)(45)}$	&$-\upsilon_W^{(23)(45)}(13)(45)$		&$\omega_Z^{(321)}$	
\\[6pt]
\hline
\end{tabular}
\end{center}
\bigskip
\caption{The multiplication table of elements of $\signs \times \Aut P$ that represent coset representatives of the third kind in $\SwAut P_{3,2}$.}
\label{Tb:P32mult-omega}
\end{table}

To illustrate the calculations involved in preparing the multiplication tables for $P_{3,2}$ we solve three representative cases.

\begin{exam}\label{X:P32a}
For the first two examples we compute the product of $\omega_Z$ times two other switching permutations in $R$.  First, 
\begin{align*}
\omega_Z \omega_Z 
&= \zeta_{\{{34},{25},{13},{24}\}}(145) \cdot \zeta_{\{{34},{25},{13},{24}\}}(145) 	\\
&= \zeta_{\{{34},{25},{13},{24}\}} \zeta_{\{{34},{25},{13},{24}\}^{(145)\inv}} (145) (145) 	\\
&= \zeta_{\{{34},{25},{13},{24}\}} \zeta_{\{{31},{24},{53},{21}\}} (541) 	
= \zeta_{\{{34},{25},{13},{24}\} \oplus \{{31},{24},{53},{21}\}} (541) 	\\
&= \zeta_{\{{34},{25},{53},{21}\}} (541) 	
= \zeta_{Z^{(23)(45)}} (541) 	
= \omega_{Z}^{(23)(45)} .
\end{align*}

Next, a more complicated example involving complementation of the switching set and a residual permutation that is an automorphism of $P_{3,2}$.
\begin{align*}
\omega_Z \omega_Z^{(321)} 
&= \zeta_{\{{34},{25},{13},{24}\}}(145) \cdot \zeta_{\{{24},{15},{23},{14}\}}(345) 	\\
&= \zeta_{\{{34},{25},{13},{24}\}} \zeta_{\{{24},{15},{23},{14}\}^{(145)\inv}} (145)(345) 	\\
&= \zeta_{\{{34},{25},{13},{24}\}} \zeta_{\{{21},{54},{23},{51}\}} (15)(34) 	
= \zeta_{\{{34},{25},{13},{24}\} \oplus \{{12},{45},{23},{15}\}} (15)(34) 	\\
&= -\zeta_{\{14,35\}} (15)(34) 	
= [-\zeta_{\{14,35\}} (14)(35)] \cdot [(14)(35)]\inv(15)(34)	\\
&= -\upsilon_{\{14,35\}} \cdot (35)(14)(15)(34)	
= -\upsilon_{W}^{(321)} (13)(45).
\end{align*}
\end{exam}

\begin{exam}\label{X:P32b}
We use Example \ref{X:P32a} to compute left multiplication by a transform of $\omega_Z$.  
\begin{align*}
\omega_Z^{(321)} \omega_Z^{(123)} 
&= \big[ \omega_Z \omega_Z^{(123)(321)\inv} \big]^{(321)} 	
= \big[ \omega_Z \omega_Z \big]^{(321)} 	,
\intertext{which by Example \ref{X:P32a}}
&= \big[ -\upsilon_{W}^{(321)} (13)(45) \big]^{(321)} = -\upsilon_{W}^{(123)} (32)(45) .
\end{align*}
\end{exam}

By explicitly inverting the isomorphism $\barp_A : \SwAut P_{3,2} \to \fA_5: \bar\zeta\xi \mapsto \xi$ we can say, for any $\xi \in \fA_5$, exactly which switching function $\zeta_X\gamma_X$ should be associated with it.  

\begin{prop}\label{P:P32aut-sw}
For a permutation $\xi \in \fA_5$, the corresponding switching automorphism of $P_{3,2}$ is $\bar\zeta_X\xi \bar\zeta_X\gamma_X\alpha \in \bar\zeta_X\gamma_X \Aut P_{3,2}$ where $\zeta_X\gamma_X \in R$ is given by
$$
\zeta_X\gamma_X = \begin{cases}
\zeta_\eset \,\id = \eps\,\id	&\text{ if } \{4,5\}^{\xi\inv} = \{4,5\}, \\
\zeta_{\{{34},{25},{13},{24}\}^\lambda}(i45)	&\text{ if } \{4,5\}^{\xi\inv} = \{i,4\}, \text{ where } \lambda = (123)^{i-1}, \\
\zeta_{\{{25},{34},{12},{35}\}^\lambda}(54i)	&\text{ if } \{4,5\}^{\xi\inv} = \{i,5\}, \text{ where } \lambda = (123)^{i-1}, \\
\zeta_{\{i5,j4\}} (i5)(j4) 	&\text{ if } \{4,5\}^{\xi\inv} = \{i,j\} \subset \3, \text{ where } j = i^{(123)} ,
\end{cases}
$$
and $\alpha = \gamma_X\inv \xi$.
\end{prop}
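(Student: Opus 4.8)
The plan is to invert $\barp_A$ one coset at a time, exploiting the disjoint decomposition \eqref{E:P32set}.  As a preliminary I would record that $\barp_A$ restricts to an \emph{isomorphism} $\SwAut P_{3,2}\to\fA_5$: it is a monomorphism by Proposition \ref{P:autgroups}, its image lies between $\Aut P_{3,2}$ and $\Aut P=\fS_{\5}$, and the orbit count in Section \ref{typeP3d} gives $|\SwAut P_{3,2}|=6\cdot 10=60$, so the image is all of $\fA_5$.  Thus for each $\xi\in\fA_5$ there is a unique switching automorphism of $P_{3,2}$ with permutation part $\xi$; write it $\bar\zeta\xi$.

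By \eqref{E:P32set}, $\bar\zeta\xi$ lies in exactly one coset $\bar\zeta_X\gamma_X\Aut P_{3,2}$ with $\zeta_X\gamma_X\in R$, so $\bar\zeta\xi=\bar\zeta_X\gamma_X\alpha$ for a unique $\alpha\in\Aut P_{3,2}$.  Applying the injective $\barp_A$ yields $\xi=\gamma_X\alpha$ as permutations, hence $\alpha=\gamma_X\inv\xi$ and $\bar\zeta=\bar\zeta_X$.  So the entire content of the proposition is the identification of which $\zeta_X\gamma_X\in R$ is attached to a given $\xi$.

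The crux is the following observation.  By Proposition \ref{P:cosetsw}(iii), $\bar\zeta\xi$ belongs to $\bar\zeta_X\gamma_X\Aut P_{3,2}$ if and only if $\gamma_X\inv\xi\in\Aut P_{3,2}$.  Now $\Aut P_{3,2}=(\fS_{\{1,2,3\}}\times\fS_{\{4,5\}})^+$ is precisely the group of even permutations of $\5$ that stabilize the $2$-set $\{4,5\}$, and every $\gamma_X$ occurring in $R$, like every $\xi\in\fA_5$, is even; so $\gamma_X\inv\xi\in\Aut P_{3,2}$ exactly when $\gamma_X\inv\xi$ fixes $\{4,5\}$, i.e.\ when $\{4,5\}^{\xi\inv}=\{4,5\}^{\gamma_X\inv}$.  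Hence the coset of $\bar\zeta\xi$---and therefore the associated $\zeta_X\gamma_X$---is determined by the single datum $\{4,5\}^{\xi\inv}$, a $2$-subset of $\5$.

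What remains is bookkeeping.  One runs through the ten elements $\zeta_X\gamma_X$ of $R$ and computes $\{4,5\}^{\gamma_X\inv}$ in each case: it is $\{4,5\}$ for $\eps\,\id$; it is $\{i,4\}$ for the three representatives whose permutation part is the $3$-cycle $(i45)$; it is $\{i,5\}$ for the three whose permutation part is $(54i)$; and it is $\{i,j\}$ with $j=i^{(123)}$ for the three $\upsilon$-type representatives, whose permutation part is $(i5)(j4)$ (here $i$ ranges over $\{1,2,3\}$).  These ten $2$-sets exhaust the $2$-subsets of $\5$, so the four cases displayed in the statement are mutually exclusive and jointly exhaustive; reading off the representative attached to each value of $\{4,5\}^{\xi\inv}$, and using $\omega_Z^\mu=\omega_{Z^\mu}$, $\upsilon_W^\mu=\upsilon_{W^\mu}$ together with the explicit sets of Table \ref{Tb:P32WX}, produces the formula in the statement, with $\alpha=\gamma_X\inv\xi$ as already derived.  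I expect the only delicate point to be matching the concrete switching sets of Table \ref{Tb:P32WX} to the index patterns $Z^\lambda$, $\{v_{25},v_{34},v_{12},v_{35}\}^\lambda$ and $\{v_{i5},v_{j4}\}$ used in the statement, and keeping the left-to-right composition convention consistent; past that, the argument is a routine coset computation with no real obstacle.
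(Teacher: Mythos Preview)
Your argument is correct and follows essentially the same route as the paper: both identify that the coset of $\xi$ is governed by the condition $\gamma_X\inv\xi\in\Aut P_{3,2}$, reduce this via the $\{4,5\}$-stabilizer description of $\Aut P_{3,2}$ to the equation $\{4,5\}^{\xi\inv}=\{4,5\}^{\gamma_X\inv}$, and then tabulate $\{4,5\}^{\gamma_X\inv}$ over the ten representatives in $R$.  The paper additionally notes that conjugation by powers of $(123)$ lets one check only four base cases rather than all ten, but this is a minor efficiency and not a different idea.
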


\begin{proof}
The question is to find the vertex set $X$ such that $\gamma_X$, of $\zeta_X\gamma_X \in R$, satisfies $\gamma_X\alpha = \xi$ for some $\alpha \in \Aut P_{3,2}$; in other words, $\gamma_X\inv\xi = \alpha \in \Aut P_{3,2}$.  By this definition of $\alpha$, $\{4,5\}^{\xi\inv} = \{4,5\}^{\alpha\inv\gamma_X\inv}$.  But $\{4,5\}$ is invariant under $\Aut P_{3,2}$.  Therefore, $\{4,5\}^{\xi\inv} = \{4,5\}^{\gamma_X\inv}$, which depends only on the coset of $\Aut P_{3,2}$ to which $\xi$ belongs.  In other words, we need only consider the case $\alpha = \id$, which means we examine only all $\xi = \gamma_X$.  Now the proposition follows easily by inspection of the ten cases of $\gamma_X$.

A better method is to show that the proposition for one $X$ implies it for all $X^\lambda$.  
Replacing $X$ by $X^\lambda$,
$$
\{4,5\}^{(\gamma_{X^\lambda})\inv} 
= \{4,5\}^{(\gamma_{X}^\lambda)\inv} 
= \{4,5\}^{\lambda\inv(\gamma_{X})\inv\lambda}
= (\{4,5\}^{(\gamma_{X})\inv})^\lambda.
$$
Taking $\lambda = (123)^p$, and supposing that $\{4,5\}^{\gamma_X\inv} = \{4,5\}$, $\{i,4\}$, $\{i,5\}$, or $\{i,i^{(123}\}$, we deduce that $\{4,5\}^{(\gamma_{X^\lambda})\inv} = \{4,5\}$, $\{i^\lambda,4\}$, $\{i^\lambda,5\}$, or $\{i^\lambda,(i^\lambda)^{(123)}\}$, respectively.  That proves the claim for $\lambda = (123)^p$.  Thus, we need only check the proposition's validity for $X = \eset, Z, Z^{(12)(45)}, \text{ and } W$, which is easier than checking all ten $X$'s.
\end{proof}

A natural question is whether $\SwAut P_{3,2}$ can be written as a product of subgroups, $\fH \cdot \Aut P_{3,2}$ where $\fH \cap \Aut P_{3,2} = \{\id\}$, or in other words whether there exists a system of left coset representatives that is a subgroup.  It does not, for it is known that no subgroup of $\fA_5$ of order 6 has such a complementary subgroup.

%%%%%%
\subsubsection{The structure of $\SwAut P_{3,3}$.}  \label{structureP33}

We know the set $\SwAut P_{3,3}$ but for a full description we need the rule of multiplication and the rule for inverting the projection $p_A$.  It is easier to do this if we fix $m$, so we assume $m=5$.  Then $E^- = M_{3(5)}$, $\Aut P_{3,3} = \fS_{\4}$, and 
\begin{equation}
\SwAut P_{3,3} = \fS_{\4} \cup \bigcup_{j=1}^{4} \bar\zeta_{N[j5]} (j5) \fS_{\4}. 
\label{E:P33set}
\end{equation}
An element of the group has the form $\beta$ or $\bar\zeta_{N[j5]} (j5) \beta$ for $\beta \in \fS_{\4}$ and $j \in \4$.  To compute a product refer to Table \ref{Tb:P33mult}.

\begin{table}[hbt]
\begin{center}
\begin{tabular}{|c||c|c|}
\hline
\vstrut{15pt}
Left $\cdot $Top	&$\beta$		&$\bar\zeta_{N[j5]} (j5) \beta$\\[6pt]
\hline\hline
\vstrut{18pt}
$\alpha$	&$\alpha\beta$	&$\bar\zeta_{N[j^{\alpha\inv}5]} (j^{\alpha\inv}5) \alpha\beta$\\[6pt]
\hline
\vstrut{30pt}
$\bar\zeta_{N[i5]} (i5) \alpha$	&$\bar\zeta_{N[i5]} (i5) \alpha\beta$	
&$\begin{cases} 
\hfill	\alpha\beta &\text{ if } j = i^\alpha \\[5pt]
\bar\zeta_{N[j^{\alpha\inv}5]} (i j^{\alpha\inv}5) \alpha\beta &\text{ if } j \neq i^\alpha
\end{cases}$\\[20pt]
\hline
\end{tabular}
\end{center}
\caption{The multiplication table of $\SwAut P_{3,3}$ with negative edge set $M_{3(5)} = \{v_{12}v_{34},v_{13}v_{24},v_{14}v_{23}\}$.  $i,j \in \4$ and $\alpha, \beta \in \fS_{\4}$.}
\label{Tb:P33mult}
\end{table}

The second product column in Table \ref{Tb:P33mult} requires proof, for which the main step is this computation (done for a switching permutation $\zeta\alpha$ and consequently the same for the switching automorphism $\bar\zeta\alpha$):
\begin{align*}
\alpha \cdot \zeta_{N[j5]} (j5) &= \zeta_{N[j^{\alpha\inv}5]} \alpha (j5) 
	= \zeta_{N[j^{\alpha\inv}5]} (j^{\alpha\inv}5) \cdot \alpha.
\end{align*}
That gives the first product.  For the second we continue the calculation, first when $j=i^\alpha$: 
\begin{align*}
\zeta_{N[i5]} (i5) \alpha \cdot \zeta_{N[i^\alpha5]} (i^\alpha5) 
	&= \zeta_{N[i5]} (i5) \zeta_{N[i5]} (i5) \alpha	
	= \zeta_{N[i5]} \zeta_{N[5i]} (i5) (i5) \alpha	
	= \alpha ;
\end{align*}
second when $j \neq i^\alpha$: 
\begin{align*}
\zeta_{N[i5]} (i5) \alpha \cdot \zeta_{N[j5]} (j5) 
	&= \zeta_{N[i5]} (i5) \zeta_{N[j^{\alpha\inv}5]} (j^{\alpha\inv}5) \alpha	\\
	&= \zeta_{N[i5]} \zeta_{N[j^{\alpha\inv}i]} (i5) (j^{\alpha\inv}5) \alpha	
	= -\zeta_{N[j^{\alpha\inv}5]} (i j^{\alpha\inv} 5)\alpha ,
\end{align*}
because ${N[pq]} \oplus {N[qr]} ={N[pr]^c}$, whence $\zeta_{N[pq]} \zeta_{N[qr]} = \zeta_{N[pr]^c} = -\zeta_{N[pr]}$.

Every permutation $\xi \in \fS_{\5}$ is the projection of a unique element $\bar\zeta_X\gamma_X\cdot\alpha \in \SwAut P_{3,3}$ belonging to the coset $\bar\zeta_X\gamma_X\Aut P_{3,3}$.  The following formulas give $\zeta_X$, $\gamma_X$, and $\alpha$ in terms of $\xi$, thereby inverting $p_A$.  Let $\zeta_X\gamma_X\alpha := p_A\inv(\xi)$.  Then $\zeta_X\gamma_X$ identifies the coset of $\fS_{\4}$, and $\alpha$ identifies the element of $\fS_{\4}$ that gives $\xi$.
\begin{align}
(\zeta_X, \gamma_X, \alpha) &= 
\begin{cases}
(\eps, \id, \xi)		&\text{ if $5$ is fixed by } \xi, \\
(\zeta_{N[5^{\xi\inv} 5]}, (5^{\xi\inv} 5), (5^{\xi\inv} 5) \xi)	&\text{ if $5$ is not fixed.} 
\end{cases}
\end{align}
(Note that $(5^{\xi\inv} 5)\xi$ in cycle form is $\xi$ with $5$ deleted from whichever cycle it is in.  Also note that if we interpret $N[kk]$ as the empty set, so $\zeta_{N[kk]}$ is $\epsilon$, and $(55)$ as the trivial cycle $(5)$, then the first line is subsumed in the second line.)

%%%%%%
\subsubsection{The end of the proof}\label{endautproof}

That concludes the proof of Theorem \ref{T:aut}.
\end{proof}

%%=================
\subsection{Orbits and copies}\label{orbit}

There are two ways signed graphs $\Sigma$ and $\Sigma'$ based on the same graph $\Gamma$ can be isomorphic.  They may have the same set of positive circles, which (by Lemma \ref{L:switching}) is the same as saying they are switching equivalent; then for many purposes they are essentially the same.  The other possibility is that they belong to different switching equivalence classes; in other words, their positive circles are not the same ones even though they correspond under an automorphism of $\Gamma$.  From the automorphism and switching automorphism groups we can deduce the number of signatures of $\Gamma$ that are isomorphic to $\Sigma$ and also the number that are switching inequivalent to $\Sigma$ and to each other, i.e., the number of switching equivalence classes of signatures isomorphic to $\Sigma$.

There is a nice bonus to this: we get an interpretation of the part of $\Aut \Gamma$ that does not belong to $\barp_A(\SwAut\Sigma)$.  Apply any automorphism $\gamma \in \Aut\Gamma$ to $\Sigma$.   
Then $\Sigma^\gamma \sim \Sigma$ if and only if $\gamma \in \barp_A(\SwAut\Sigma)$.  That means $\Sigma^\gamma$ for $\gamma \notin \barp_A(\SwAut\Sigma)$, while isomorphic to $\Sigma$, belongs to a different switching equivalence class.

A fine example is $\SwAut P_{3,2}$, whose projection is the alternating group $\fA_5$.  Any single transposition changes $(P,\sigma) \cong P_{3,2}$ to an inequivalent $(P,\sigma')$, but there is one that is simplest.  In the notation of Table \ref{Tb:autexact}, it is $(lm)$.  This permutation preserves the hexagon $H_{lm}$ that contains $E^-$ while reversing the signs of the hexagon's edges.  Whether there are such distinguished permutations to change one switching automorphism class of $P_1$, $P_{2,2}$, or $P_{2,3}$ to another is not known.

The number of different isomorphic (but possibly switching equivalent) copies of a particular signature $\Sigma$ is the number of orbits of $\Aut\Sigma$, which equals $|\!\Aut\Gamma| / |\!\Aut\Sigma|$.  The number of different copies that are not switching equivalent, i.e., the number of switching equivalence classes of signatures isomorphic to $\Sigma$, is $|\!\Aut\Gamma| / |\!\SwAut\Sigma|$, the number of orbits of $\SwAut\Sigma$.  For instance, $|\!\Aut P_1| = |\!\SwAut P_1| = |\fD_4| = 8$; $|\!\Aut P| / |\!\Aut P_1| = |\!\Aut P| / |\!\SwAut P_1| = 5!/8 = 15$; and (obviously) there are $|E| = 15$ ways to have one negative edge, none of which is switching equivalent to any other.

\begin{table}[htb]
\begin{center}
\begin{tabular}{|c||c|c|c|c|c|c|}
\hline
\vstrut{15pt}$(P,\sigma)$	&$+P$, $-P$ &$P_1$, $-P_1$ &$P_{2,2}$, $-P_{2,2}$ &$P_{2,3}$, $-P_{2,3}$ &$P_{3,2}$, $-P_{3,2}$ &$P_{3,3}$, $-P_{3,3}$	\\[3pt]
\hline
\vstrut{15pt}\# copies	
&$1$	&$15$	&$60$	&$15$	&$20$	&$5$	\\[2pt]
\hline
\vstrut{15pt}\# [copies]	
&$1$	&$15$	&$30$	&$15$	&$2$	&$1$	\\[2pt]
\hline
\end{tabular}
\end{center}
\bigskip
\caption{The number of different signatures of $P$ that are isomorphic to each minimal signed Petersen graph and its negative (`copies'); and the number of switching equivalence classes of such signatures (`[copies]').}
\label{Tb:count}
\end{table}
%

%%====================================
\section{Coloring}\label{col}

A \emph{coloration} (in full, \emph{proper $k$-coloration}, where $k \geq 0$) of a signed graph is a function $\kappa: V \to \{0, \pm1, \pm2, \ldots, \pm k\}$ such that if $vw$ is an edge, then $\kappa(w) \neq \sigma(vw)\kappa(v)$.  
The \emph{chromatic number} $\chi(\Sigma)$ is the smallest $k$ such that there is a proper $k$-coloration of $\Sigma$.  
A signed graph has a second chromatic number, the \emph{zero-free chromatic number} $\chi^*(\Sigma)$; it is the smallest $k$ such that there is a proper $k$-coloration of $\Sigma$ that does not use the color $0$.  As the color $0$ can be replaced by $+(k+1)$ to turn a coloration into a zero-free coloration, $\chi^*(\Sigma) = \chi(\Sigma) + 0$ or $1$.

The chromatic numbers pair with chromatic polynomials.  The \emph{chromatic polynomial} of $\Sigma$ is the function $\chi_\Sigma(2k+1) :=$ the number of proper $k$-colorations, and the \emph{zero-free chromatic polynomial} is $\chi^*_\Sigma(2k) :=$ the number that are zero free.  
(One can prove these functions are monic polynomials of degree $|V|$ by any method that establishes the chromatic polynomial $\chi_\Gamma(y)$ of an ordinary graph; see \cite{SGC}.  There is another connection: $\chi_\Gamma(y) = \chi_{+\Gamma}(y) = \chi^*_{+\Gamma}(y)$.)

\begin{prop}\label{P:chromaticsw}
The chromatic numbers and the chromatic polynomials of a signed graph are invariant under switching and isomorphism.
\end{prop}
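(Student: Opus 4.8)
The plan is to realize each of the two operations---switching by a function $\zeta$, and isomorphism by a map $\psi$---as an explicit bijection between the proper $k$-colorations of one signed graph and those of the other, in such a way that the bijection preserves the set of colors actually used and, in particular, whether or not the color $0$ is used. Granting such bijections for every $k$, all four assertions follow immediately: equality of $\chi$ and of $\chi^*$ (the smallest $k$ admitting a coloration, resp.\ a zero-free coloration) and equality of the two chromatic polynomials (the number of $k$-colorations, resp.\ of zero-free ones).

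First I would handle switching. Given a proper coloration $\kappa$ of $\Sigma=(\Gamma,\sigma)$, define $\kappa^\zeta\colon V\to\{0,\pm1,\dots,\pm k\}$ by $\kappa^\zeta(v):=\zeta(v)\kappa(v)$. Since $\zeta(v)\in\signs$, multiplication by $\zeta(v)$ is a permutation of the palette $\{0,\pm1,\dots,\pm k\}$ that fixes $0$; hence $\kappa^\zeta$ is valued in the same set and is zero-free exactly when $\kappa$ is. The substantive point is the properness check: for an edge $vw$, using $\sigma^\zeta(vw)=\zeta(v)\sigma(vw)\zeta(w)$, $\zeta(v)^2=1$, and cancellation of the nonzero sign $\zeta(w)$,
$$\kappa^\zeta(w)=\sigma^\zeta(vw)\,\kappa^\zeta(v)\iff \zeta(w)\kappa(w)=\zeta(w)\sigma(vw)\kappa(v)\iff \kappa(w)=\sigma(vw)\kappa(v),$$
so $\kappa^\zeta$ fails properness on exactly the edges where $\kappa$ does; thus $\kappa^\zeta$ is a proper coloration of $\Sigma^\zeta$ iff $\kappa$ is a proper coloration of $\Sigma$. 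Applying the same construction once more with $\zeta$ recovers $\kappa$ (again by $\zeta(v)^2=1$), so $\kappa\mapsto\kappa^\zeta$ is a bijection between the proper $k$-colorations, and between the zero-free ones, of $\Sigma$ and $\Sigma^\zeta$. Hence $\chi(\Sigma)=\chi(\Sigma^\zeta)$, $\chi^*(\Sigma)=\chi^*(\Sigma^\zeta)$, $\chi_\Sigma=\chi_{\Sigma^\zeta}$, and $\chi^*_\Sigma=\chi^*_{\Sigma^\zeta}$.

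Next I would handle isomorphism. Let $\psi\colon\Gamma_1\to\Gamma_2$ be a sign-preserving isomorphism $\Sigma_1\to\Sigma_2$, so $\sigma_2(v^\psi w^\psi)=\sigma_1(vw)$ for every edge $vw$ of $\Gamma_1$. For a coloration $\kappa$ of $\Sigma_2$ put $\kappa'(v):=\kappa(v^\psi)$; this is evidently a bijection on colorations that preserves the color set and zero-freeness, and properness of $\kappa'$ at $vw$ is verbatim properness of $\kappa$ at $v^\psi w^\psi$ because $\sigma_1(vw)=\sigma_2(v^\psi w^\psi)$. Since $\psi$ induces a bijection of edge sets, $\kappa'$ is proper on $\Sigma_1$ iff $\kappa$ is proper on $\Sigma_2$, and the four equalities follow as before. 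Invariance under switching isomorphism is then immediate by composing the two cases.

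I do not expect a genuine obstacle: the entire content is the bookkeeping in the displayed chain of equivalences, where one must carry the two factors $\zeta(v)$ and $\zeta(w)$ through $\sigma^\zeta(vw)$, invoke $\zeta(v)^2=1$, and cancel the nonzero sign $\zeta(w)$. The one point worth stating with care is the elementary observation that multiplication by a sign permutes the palette $\{0,\pm1,\dots,\pm k\}$ and fixes $0$, since that is precisely what makes both the color bound $k$ and the zero-free condition transport across the bijection.
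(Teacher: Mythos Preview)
Your proof is correct and follows essentially the same approach as the paper: define $\kappa^\zeta(v):=\zeta(v)\kappa(v)$ and verify that properness transports via the identity $\sigma^\zeta(vw)=\zeta(v)\sigma(vw)\zeta(w)$, yielding a bijection of proper (and of zero-free) colorations. The paper dismisses isomorphism invariance as obvious and omits the explicit remark that multiplying by a sign fixes $0$, but otherwise the arguments are the same.
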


\begin{proof}
Isomorphism invariance is obvious.  For switching invariance, consider a proper coloration $\kappa$.  A switching function $\zeta$ acts on $\kappa$ by transforming it to $\kappa^\zeta(v) := \zeta(v)\kappa(v)$.  The condition for a coloration to be proper, $\kappa(w) \neq \kappa(v)\sigma(vw)$, when multiplied by $\zeta(w)$, takes the form 
$$
\kappa^\zeta(w) = \kappa(w)\zeta(w) \neq \kappa(v)\sigma(vw)\zeta(w) = [\kappa(v)\zeta(v)] [\zeta(v)\sigma(vw)\zeta(w)] = \kappa^\zeta(v)\sigma^\zeta(vw).
$$  
Thus, $\kappa^\zeta$ is a proper coloration of $\Sigma^\zeta$ if and only if $\kappa$ is a proper coloration of $\Sigma$.  This establishes a bijection between proper colorations of $\Sigma$ and of $\Sigma^\zeta$ and hence the proposition.
\end{proof}

%%=================
\subsection{Chromatic numbers}\label{chronum}

The chromatic numbers are weak invariants; they are nearly the same for all signatures of $P$.

\begin{thm}\label{T:col}
The chromatic and zero-free chromatic numbers of signed Petersen graphs are as in Table \ref{Tb:col}.
\end{thm}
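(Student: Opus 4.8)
The plan is to show that $\chi(\Sigma)=1$ for all six minimal signed Petersen graphs and that $\chi^*(\Sigma)=2$ for all of them except $P_{3,3}$, for which $\chi^*(P_{3,3})=1$. By Proposition~\ref{P:chromaticsw} both chromatic numbers are switching-isomorphism invariants, and by Theorem~\ref{T:fr} every signature of $P$ is switching isomorphic to one of the six minimal types, so it suffices to evaluate $\chi$ and $\chi^*$ on those six representatives (indeed the upper-bound argument below will work for every signature of $P$).

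For the lower bounds: every signed Petersen graph has an edge $vw$, and the constant map $\kappa\equiv0$ is not proper because $\kappa(v)=\kappa(w)=0$ violates $\kappa(w)\neq\sigma(vw)\kappa(v)$ whatever the sign of $vw$; hence $\chi(\Sigma)\geq1$ and $\chi^*(\Sigma)\geq1$. The sharper bound $\chi^*(\Sigma)\geq2$ will come from the general equivalence $\chi^*(\Sigma)=1\iff\Sigma$ is antibalanced: a proper coloration of $\Sigma$ using only the colors $\{+1,-1\}$ is exactly a switching function $\zeta:V\to\signs$ with $\sigma^\zeta\equiv-$, that is, a witness that $\Sigma$ switches to the all-negative signature. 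I would then determine which of the six types are antibalanced, using $\Sigma$ antibalanced $\iff$ $-\Sigma$ balanced $\iff$ $l(-\Sigma)=0$. Pushing the negation through the switching isomorphisms of Theorem~\ref{T:types} and reading Table~\ref{Tb:fr}, one gets $l(-\Sigma)>0$ for $\Sigma=+P,P_1,P_{2,2},P_{2,3},P_{3,2}$ while $l(-P_{3,3})=l(+P)=0$; thus $P_{3,3}$ is the unique antibalanced type, and $\chi^*(\Sigma)\geq2$ for the other five.

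For the upper bound $\chi(\Sigma)\leq1$ I would give one construction that applies uniformly. Fix a maximum independent set $X_m=\{v_{im}:i\in\5\setm m\}$; by the structure recorded in Section~\ref{structure}, $P\setm X_m$ is a three-edge matching. Define $\kappa$ by $\kappa\equiv0$ on $X_m$, and on $P\setm X_m$ color the two ends of each positive edge $+1$ and $-1$ and both ends of each negative edge $+1$; this uses only the colors $\{0,+1,-1\}$. Inside $\Sigma\setm X_m$ every edge is properly colored by construction, and since $X_m$ is independent, any edge $e$ meeting $X_m$ has exactly one end in $X_m$ (colored $0$) and the other end colored $\pm1\neq0=\sigma(e)\cdot0$, so $e$ is properly colored too. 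Hence $\kappa$ is a proper $1$-coloration, so $\chi(\Sigma)=1$. Combining with $\chi^*(\Sigma)\leq\chi(\Sigma)+1$ gives $\chi^*(\Sigma)\leq2$, which together with the previous paragraph yields $\chi^*(\Sigma)=2$ for the five non-antibalanced types and $\chi^*(P_{3,3})=1$.

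I do not expect a genuine obstacle: the argument is essentially the single $X_m$-deletion coloring for the $\chi$-values, plus the equivalence ``$\chi^*=1\iff$ antibalanced'' (a one-line switching computation) reduced, via Theorem~\ref{T:types} and Table~\ref{Tb:fr}, to a finite check of frustration indices for the $\chi^*$-values. The only place where a small amount of care is needed is making sure the all-negative witness for antibalance is framed correctly and that the negation is correctly matched to its minimal representative.
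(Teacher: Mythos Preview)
Your argument is correct, and it is genuinely different from the paper's proof. The paper proceeds via the contraction $|\Sigma|/E^-(\Sigma)$ and a case analysis (Lemma~\ref{L:3sgdcolors}): for each minimal type it computes or bounds the ordinary chromatic number of the contracted graph and pulls colorations back, with $P_{3,2}$ requiring an ad~hoc direct coloring because the contraction method does not immediately yield $\chi=1$ there. Your route is more uniform: the single structural fact that $P\setminus X_m$ is a perfect matching on six vertices gives $\chi(\Sigma)\le 1$ for \emph{every} signature in one stroke, with no case splitting; and your treatment of $\chi^*$ via the equivalence ``$\chi^*(\Sigma)=1\iff\Sigma$ antibalanced'' (which the paper establishes only later, as Equation~\eqref{E:2col}) together with Theorem~\ref{T:types} and Table~\ref{Tb:fr} replaces the contraction bookkeeping by a clean lookup. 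The paper's approach has the advantage of developing the general contraction lemma, which is of independent interest, whereas yours is shorter and more self-contained for this particular theorem.
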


\begin{table}[htb]
\begin{center}
\begin{tabular}{|r||c|c|c|c|c|c|}
\hline
$(P,\sigma)$	\vstrut{15pt}&\hbox to 2em{\,$+P$} &\hbox to 2em{\;\;$P_1$} &\hbox to 2em{\,$P_{2,2}$} &\hbox to 2em{\,$P_{2,3}$} &\hbox to 2em{\,$P_{3,2}$}	&\hbox to 2em{\,$P_{3,3}$}	\\[3pt]
\hline
\vstrut{15pt}$\chi(P,\sigma)$	&1	&1	&1	&1	&1	&1	\\
\vstrut{15pt}$\chi^*(P,\sigma)$	&2	&2	&2	&2	&2	&1	\\[2pt]
\hline
\end{tabular}
\end{center}
\bigskip
\caption{The chromatic numbers of signed Petersen graphs.}
\label{Tb:col}
\end{table}

To find the chromatic numbers of any $(P,\sigma)$, switch it into one of the minimal forms and look it up in Table \ref{Tb:col}.  
Note that $+P \simeq -P_{3,3}$, $P_1 \simeq -P_{2,3}$, $P_{2,2} \simeq -P_{2,2}$, $P_{2,3} \simeq -P_1$, $P_{3,2} \simeq -P_{3,2}$, and $P_{3,3} \simeq -P$.

We prepare for the proof of Theorem \ref{T:col} with definitions and a lemma.

By a \emph{signed color} we mean $0$ or $+i$ or $-i$ for $i>0$.  For consistency with the definition of chromatic numbers, when coloring a signed graph we call $\pm1$ a single \emph{unsigned color} and we do not count 0 as an unsigned color.  Thus, the counting of unsigned colors on signed graphs is very different from that on unsigned graphs.  We can color an unsigned graph with signed colors but each has to be counted separately; for example, $0,+1,-1$ are three colors when coloring an unsigned graph.

Note that the endpoints of a negative edge may have the same signed color as long as that color is not 0.  

\emph{Contracting} a graph $\Gamma$ by an edge set $S$ means one shrinks each connected component of the spanning subgraph $(V,S)$ to a vertex.  The contracted graph is written $\Gamma/S$.  
(Technically, a vertex $W$ of $\Gamma/S$ is a subset of $V$ consisting of the vertices of one component of $(V,S)$; they are the vertices that are coalesced into one by the shrinking.)  
The edges of $S$ are deleted.  Another edge becomes a loop if its endpoints belong to the same component of $(V,S)$.  
We say that an original vertex that is a component of $(V,S)$ remains a vertex of $\Gamma/S$.  Any other vertex of $\Gamma/S$ results from coalescing two or more original vertices; we say it \emph{results from contraction} to distinguish it from remaining original vertices.

\begin{lem}\label{L:3sgdcolors}
Let $\Sigma$ be a signed graph and let $m \geq 1$.

\begin{enumerate}[{\rm(a)}]

\item  Suppose $\chi(|\Sigma|/E^-(\Sigma)) \leq 2m$.  Then $\chi(\Sigma) \leq \chi^*(\Sigma) \leq m$.
\label{L:3sgdcolors-m}

\item  Suppose $|\Sigma|/E^-(\Sigma)$ can be colored with the colors $0, \pm1, \ldots, \pm m$ in such a way that no vertex resulting from contraction gets the color $0$.  Then $\chi(\Sigma) \leq m$ and $\chi^*(\Sigma) \leq m+1$.
\label{L:3sgdcolors-m0}

\item If $\chi(|\Sigma|/E^-(\Sigma)) \leq 2$ and $\Sigma$ has at least one edge, then $\chi(\Sigma) =  \chi^*(\Sigma) = 1$.
\label{L:3sgdcolors-2}

\item  If $\chi(|\Sigma|/E^-(\Sigma)) = 3$, then $\chi^*(\Sigma) = 2$.
\label{L:3sgdcolors-3}

\end{enumerate}
\end{lem}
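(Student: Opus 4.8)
The plan is to use the contraction $G:=|\Sigma|/E^-(\Sigma)$ to pass between signed colorations of $\Sigma$ and ordinary colorations of $G$; the whole lemma will drop out of this translation once one watches out for loops. I would first record the \emph{easy} direction, which yields parts (a) and (b). Observe that whenever $G$ admits a proper coloration (of any size) it is loopless, and by the definition of contraction this says exactly that no positive edge of $\Sigma$ has both ends in a single component of $(V,E^-)$. Write $[v]$ for the vertex of $G$ that is the $(V,E^-)$-component of $v$; then a positive edge $vw$ of $\Sigma$ descends to an edge $[v][w]$ of $G$ with $[v]\neq[w]$, whereas a negative edge $vw$ has $[v]=[w]$.

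For part (a): starting from a proper $2m$-coloration $c$ of $G$, relabel its colors injectively by the $2m$ nonzero colors of $\{0,\pm1,\dots,\pm m\}$ via some map $f$, and define $\kappa(v):=f(c([v]))$. On a positive edge $c([v])\neq c([w])$ and $f$ is injective, so $\kappa(v)\neq\kappa(w)$; on a negative edge $\kappa(v)=\kappa(w)=f(c([v]))\neq0$, so $\kappa(w)\neq-\kappa(v)$. Hence $\kappa$ is a zero-free proper $m$-coloration of $\Sigma$, giving $\chi(\Sigma)\leq\chi^*(\Sigma)\leq m$ (the inequality $\chi\leq\chi^*$ being general, since a zero-free proper $k$-coloration is in particular a proper $k$-coloration). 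For part (b) the construction is even simpler: take the given coloration $c$ of $G$ by $\{0,\pm1,\dots,\pm m\}$ and set $\kappa(v):=c([v])$. On a positive edge $\kappa(v)\neq\kappa(w)$ as before; on a negative edge $[v]=[w]$ is a vertex that results from contraction (its component contains the edge $vw$, hence has at least two vertices), so $c([v])\neq0$ by hypothesis, and again $\kappa(w)\neq-\kappa(v)$. So $\chi(\Sigma)\leq m$, and $\chi^*(\Sigma)\leq m+1$ by the already-noted fact that $\chi^*\leq\chi+1$.

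Part (c) is then immediate: apply (a) with $m=1$, which is legitimate since $\chi(G)\leq2=2m$, to get $\chi(\Sigma)\leq\chi^*(\Sigma)\leq1$; and for the reverse inequality note that the all-$0$ map fails the properness condition $\kappa(w)\neq\sigma(vw)\kappa(v)$ at every edge (it reads $0\neq0$), so any signed graph with an edge has $\chi\geq1$. Thus $\chi(\Sigma)=\chi^*(\Sigma)=1$. Part (d) similarly gets its upper bound $\chi^*(\Sigma)\leq2$ from (a) with $m=2$. For the matching lower bound I would run the \emph{hard} direction of the translation: suppose $\kappa\colon V\to\{+1,-1\}$ were a zero-free proper $1$-coloration. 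Then on a positive edge $\kappa(v)\neq\kappa(w)$, while on a negative edge $\kappa(w)\neq-\kappa(v)$ forces $\kappa(v)=\kappa(w)$; so $\kappa$ is constant on each component of $(V,E^-)$ and induces a well-defined map $\bar\kappa$ on $V(G)$. Since $\chi(G)=3$ is finite, $G$ is loopless, so each positive edge gives an edge $[v][w]$ of $G$ with $\bar\kappa([v])\neq\bar\kappa([w])$; hence $\bar\kappa$ is a proper $2$-coloration of $G$, contradicting $\chi(G)=3$. Therefore $\chi^*(\Sigma)\geq2$, and with the upper bound this gives $\chi^*(\Sigma)=2$.

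None of the individual steps is deep; the point that requires attention throughout is the loop phenomenon --- a positive edge whose ends are linked by negative edges contracts to a loop --- together with the remark that each colorability hypothesis placed on $G$ rules this out, so that positive edges of $\Sigma$ are faithfully recorded as edges of $G$. The only bit of genuine construction is the relabeling map $f$ in part (a); everything else is bookkeeping about which edges of $\Sigma$ survive contraction and which collapse.
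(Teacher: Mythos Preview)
Your proof is correct and follows essentially the same approach as the paper's: pull back a coloration of $|\Sigma|/E^-$ to $\Sigma$ for the upper bounds, and for (d) push a hypothetical $\{\pm1\}$-coloration of $\Sigma$ forward to a 2-coloration of the contraction to obtain the contradiction. You are a bit more explicit than the paper about the loop issue and about verifying the propriety condition separately on positive and negative edges, and you derive (c) from (a) rather than writing out the bipartite coloration directly, but these are only presentational differences.
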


\begin{proof}
(\ref{L:3sgdcolors-m})  Color $|\Sigma|/E^-$ with the colors $\pm1, \ldots, \pm m$.  This coloration can be pulled back to $\Sigma$, because the vertices that are contracted into $W$ can all be given the signed color of $W$.  Thereby we see that $\Sigma$ needs at most $m$ unsigned colors, without using the color 0.

(\ref{L:3sgdcolors-m0})  Color $|\Sigma|/E^-(\Sigma)$ as specified.  This coloration can be pulled back to $\Sigma$, because the vertices that are contracted into $W$ can all be given the signed color of $W$.  Thereby we see that $\Sigma$ needs at most $m$ unsigned colors if $0$ is permitted but it may need $m+1$ if $0$ is excluded.

(\ref{L:3sgdcolors-2})  When the contraction is bipartite, assign color $+1$ to one color class and $-1$ to the other.  Pulling this coloration back to $\Sigma$ yields a zero-free coloration, from which the chromatic numbers follow---as long as there is at least one edge in $\Sigma$ so one cannot color every vertex $0$.

(\ref{L:3sgdcolors-3})  From (\ref{L:3sgdcolors-m}) we conclude that $\chi^*(\Sigma) \leq 2$.  Trying to color $\Sigma$ using only $\pm1$, the endpoints of a negative edge must have the same signed color; therefore, such a coloration of $\Sigma$ can only be a pullback of a 2-coloration of $|\Sigma|/E^-$, which does not exist.  Hence, there is no coloration of $\Sigma$ using only one unsigned color without $0$, and therefore $\chi^*(\Sigma) = 2$.  
\end{proof}

\begin{proof}[Proof of Theorem \ref{T:col}]
The chromatic number of $P$ itself is 3 \cite{TPG}.  Thus, $+P$ needs exactly three signed colors, which may be $0,+1,-1$ if $0$ is used and otherwise must be, for example, $+1,-1,+2$.  

The only bipartite contraction is $P/E^-(P_{3,3})$; it can be colored with $+1,-1$, so $P_{3,3}$ can be colored using $\pm1$.  (One can more easily see this by coloring the switching-isomorphic graph $-P$.)  The other contractions need three or four signed colors.  

$P/E^-(P_{2,d})$ ($d=2,3$) has chromatic number 3, and since there are just two contracted vertices they can get nonzero signed colors; it follows that $P_{2,d}$ is colorable with signed colors $\pm1,0$, no contraction vertex being colored 0.  Therefore, $\chi(P_{2,d}) = 1$ and $\chi^*(P_{2,d}) = 2$.  The same reasoning holds for $P_1$, where there is one contracted vertex.

The most complicated contraction is $P/E^-(P_{3,2})$.  It has a triangle composed of contracted vertices, so its chromatic number is 3 but there does not exist a coloration with colors $\pm1,0$ in which no contracted vertex has color 0.  However, one can color $P_{3,2}$ directly using $\pm1,0$.  The hexagon that contains all negative edges should be colored alternately $+1$ and $0$.  The vertices adjacent to the hexagon get color $-1$ and the remaining vertex is colored $0$ or $+1$.  Thus, $\chi(P_{3,2}) = 1$ and $\chi^*(P_{3,2}) = 2$.
\end{proof}

%%=================
\subsection{Coloration counts}\label{colcount}

A more refined coloring invariant, the chromatic polynomial, does differ for different signatures of $P$, and most likely the zero-free chromatic polynomials differ as well.  Since the polynomials have degree 10, computing them is too large a project for us.  ($\chi_P(y)$ is known; perhaps it is possible to imitate the technique for calculating it in \cite[Additional Result 12c]{Biggs2}.)  I propose that the number of proper $k$-colorations for any $k \geq 1$, and also the number of zero-free proper $k$-colorations for any $k \geq 2$, is a distinguishing invariant.  We prove this for proper 1-colorations.

\begin{thm}\label{P:chromatic}
Any two signatures of the Petersen graph that are not switching isomorphic have different chromatic polynomials and in particular they have different numbers $\chi_{(P,\sigma)}(3)$ of proper $1$-colorations.
\end{thm}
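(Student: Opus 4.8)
The plan is to reduce everything to the six minimal types and then to count proper $1$-colorations type by type. By Proposition~\ref{P:chromaticsw} the chromatic polynomial of a signature of $P$ --- in particular the value $n(\sigma):=\chi_{(P,\sigma)}(3)$, the number of proper $1$-colorations --- depends only on the switching-isomorphism class, and by Theorem~\ref{T:types} there are exactly six of these, with minimal representatives $+P,\,P_1,\,P_{2,2},\,P_{2,3},\,P_{3,2},\,P_{3,3}$. So it suffices to evaluate $n$ on these six and check that the six values are pairwise distinct: then two signatures that are not switching isomorphic lie in different classes, so they already differ in $\chi_{(P,\sigma)}(3)$, and hence in their chromatic polynomials, which proves both assertions of the theorem at once.

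Next I would turn the count of $1$-colorations into a sum over independent sets. A proper $1$-coloration $\kappa\colon V\to\{0,+1,-1\}$ amounts to a choice of $S:=\kappa\inv(0)$, which must be an independent set of $P$ (no edge of either sign may have both endpoints coloured $0$), together with a proper $\{\pm1\}$-coloration of the induced signed graph $(P\setm S,\sigma)$. Writing $X:=\kappa\inv(-1)\subseteq V\setm S$, such a $\{\pm1\}$-coloration is proper precisely when $\del X=E^+(P\setm S)$, i.e.\ when switching $X$ turns $(P\setm S,\sigma)$ all negative; by Lemma~\ref{L:switching} such an $X$ exists iff $(P\setm S,\sigma)$ is antibalanced, and then the switching functions that realize it form a coset of the kernel $\fK_{P\setm S}$, so there are exactly $2^{\,c(P\setm S)}$ choices of $X$, where $c(\cdot)$ counts connected components. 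Hence
$$ n(\sigma)\;=\;\sum_{\substack{S\subseteq V\ \text{independent in }P\\ (P\setm S,\sigma)\ \text{antibalanced}}}2^{\,c(P\setm S)}. $$

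The remaining work is the finite enumeration, carried out once for each of the six representatives, and here the structural inventory of Section~\ref{structure} supplies exactly what is needed: it lists all independent sets of $P$ (the empty set; the ten single vertices; the thirty nonadjacent pairs; the thirty triples, namely the neighbourhoods $N(v)$ and the $3$-subsets of the maximum independent sets $X_m$; and the five sets $X_m$) and, for each, the isomorphism type --- hence the component count --- of $P\setm S$, for example $P\setm N(v)=C_6\cupdot K_1$, $P\setm X_m$ a three-edge matching, and $P\setm W$ a tree when $W\subset X_m$ with $|W|=3$. For $+P$ antibalance of $(P\setm S,\sigma)$ means simply that $P\setm S$ is bipartite, which by Table~\ref{Tb:frno} forces $|S|\ge l_0(-P)=3$; for the heterogeneous types one tests antibalance of the induced signed subgraph directly via Proposition~\ref{P:balance} applied to $-\sigma$. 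It is convenient to replace $\sigma$ by $-\sigma$ whenever that is simpler, since $n(\sigma)=n(-\sigma)$ --- for instance one may compute $n(P_{3,3})$ on the all-negative $-P$. With the six totals in hand one verifies by inspection that they are pairwise distinct.

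I expect this last enumeration to be the only real obstacle: it is elementary but arithmetically delicate, and the whole conclusion hinges on the six totals coming out distinct. If two of them accidentally coincided one would have to retreat to a higher coefficient of the chromatic polynomial --- which the first sentence of the theorem already anticipates --- but the assertion is that the $1$-coloration count alone suffices, so the burden is simply to get the six numbers right and see that they separate the six classes.
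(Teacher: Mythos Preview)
Your approach is correct and essentially the paper's own: your sum over independent $S$ with $(P\setminus S,\sigma)$ antibalanced, weighted by $2^{c(P\setminus S)}$, is precisely what the balanced expansion formula \eqref{E:balexp} combined with Equation~\eqref{E:2col} yields. The paper's one refinement is to subtract off $\chi_{+P}(3)$ first, which via Lemma~\ref{L:petdiff} collapses the whole enumeration to the short table of $\alpha_k(-(P,\sigma))$ for $k\le 2$ together with $c_6^-$---a trick that tames exactly the ``arithmetically delicate'' step you anticipate.
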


\begin{conj}\label{Cj:chromatic}
(a)  Two signed Petersen graphs that are not switching isomorphic have different zero-free chromatic polynomials; in particular they have different numbers $\chi_{(P,\sigma)}^*(4)$ of zero-free proper $2$-colorations.  
(b)  For any $\mu \geq 2$, the six values $\chi_{(P,\sigma)}(2\mu+1)$ are different for each switching isomorphism class of sign functions, and so are the six values $\chi^*_{(P,\sigma)}(2\mu)$.
\end{conj}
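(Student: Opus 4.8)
The plan is to attack the conjecture through the subset (Whitney-rank) expansions of the two chromatic polynomials, which reduce everything to switching-invariant counts of balanced subgraphs and so let us work entirely with the six minimal representatives of Theorem~\ref{T:types}. First I would record these expansions. By inclusion--exclusion over the edges on which the coloring constraint $\kappa(w)\neq\sigma(vw)\kappa(v)$ is violated, and by propagating the forced equalities through each component of the spanning subgraph $(V,S)$, a balanced component admits $y$ (resp.\ $\lambda$) consistent base colors while an unbalanced component forces the base color $0$ (one coloration when $0$ is allowed, none when it is forbidden). This gives
\begin{equation*}
\chi_\Sigma(y)=\sum_{S\subseteq E}(-1)^{|S|}\,y^{b(S)},
\qquad
\chi^*_\Sigma(\lambda)=\sum_{\substack{S\subseteq E\\ (V,S)\text{ balanced}}}(-1)^{|S|}\,\lambda^{c(S)},
\end{equation*}
where $c(S)$ is the number of components of $(V,S)$ and $b(S)$ the number of \emph{balanced} components. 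Both sums are manifestly switching invariant, reconfirming Proposition~\ref{P:chromaticsw} and reducing the problem to the six minimal signatures.

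Next I would localize where the six signatures can differ. Every forest is balanced under every signature, and because $P$ has girth $5$, any edge set with fewer than five edges is a forest; hence all subsets feeding the coefficients of $y^{10},\dots,y^{7}$ (and the top terms of $\chi^*$) are signature independent. Consequently each pairwise difference $D_{ij}:=\chi_{\Sigma_i}-\chi_{\Sigma_j}$ and $D^*_{ij}:=\chi^*_{\Sigma_i}-\chi^*_{\Sigma_j}$ is a polynomial of degree at most $6$, since a cycle-bearing $S$ uses at least five vertices and so has $c(S)\le 6$. Its coefficients are governed entirely by edge sets containing a circle, which I would enumerate on $P$ by size and component count: the pentagon and hexagon contributions are already tabulated (Table~\ref{Tb:circ}, via balanced $=12-c_5^-$ and $10-c_6^-$), and the remaining bounded configurations (pentagon-plus-edge, two disjoint pentagons, $\theta$-subgraphs, and so on) need additional but finite counting. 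This yields explicit forms for all fifteen differences. Part~(a) then follows by reading off the coefficients---they already separate at the pentagon/hexagon level, exactly as in the proof of Theorem~\ref{T:circ}---and by evaluating at $\lambda=4$ to see that the six numbers $\chi^*_{(P,\sigma)}(4)$ are pairwise distinct.

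The crux is part~(b): for each of the fifteen differences one must show $D_{ij}(2\mu+1)\neq0$ and $D^*_{ij}(2\mu)\neq0$ for \emph{every} integer $\mu\ge2$, i.e.\ non-vanishing on an entire arithmetic progression rather than at the single point $\mu=1$ settled by Theorem~\ref{P:chromatic}. My plan is to bound the real roots of each (low-degree, small-integer-coefficient) difference by an explicit Cauchy bound $B$; the sign of the leading term then excludes roots beyond $B$, and the finitely many admissible integers in $[5,B]$ (resp.\ $[4,B]$) are dispatched by direct substitution. This is where the real work and the genuine uncertainty lie. A computation-free finish would instead require proving that each $D_{ij}$ is sign-definite on $[5,\infty)$---for instance by exhibiting sign-coherent coefficients after the shift $y\mapsto y+5$, a Newton-type positivity---which would clear all $\mu$ at once. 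The enumeration of the third step is laborious but routine; the essential obstacle, and presumably the reason the statement remains a conjecture, is establishing such global control of the difference polynomials over the infinite range of $\mu$, since distinctness of the polynomials alone (which Theorem~\ref{P:chromatic} already yields) does not force distinctness of their values at every prescribed point.
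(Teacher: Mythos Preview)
The paper does not prove this statement: it is stated as a conjecture and left open. There is therefore no proof in the paper to compare your proposal against.

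Your outline is a reasonable programme rather than a proof, and you say as much yourself. The subset expansions you write down are the standard ones from \cite{SGC}, and your observation that every pairwise difference $D_{ij}$ or $D^*_{ij}$ has degree at most $6$ (because any $S$ containing a circle occupies at least five vertices of $P$) is correct and useful. From there the strategy of computing the fifteen explicit degree-$\le 6$ polynomials, then handling part~(b) by a Cauchy root bound plus finite checking, is in principle sound. But none of the actual enumeration is carried out: you would need, for each of the six signatures, the full census of cycle-bearing edge sets $S\subseteq E(P)$ stratified by $(|S|,c(S),b(S))$, and for $\chi^*$ the further restriction to balanced $S$. That is a substantial (if finite) calculation, and until it is done you cannot assert either the specific value claim $\chi^*_{(P,\sigma)}(4)$ in part~(a) or any of the non-vanishing claims in part~(b). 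Note in particular that the leading coefficient of $D^*_{ij}$ is $c_5^-(\Sigma_i)-c_5^-(\Sigma_j)$, which vanishes for the pair $(P_{2,2},P_{3,2})$ by Table~\ref{Tb:circ}; so ``separation at the pentagon/hexagon level'' is not automatic and you must push the enumeration at least one layer deeper even to see that those two polynomials differ.

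In short: the approach could work and is the natural one, but what you have written is a plan with an acknowledged gap at exactly the point where the work lies, which is consistent with the statement's status as an open conjecture in the paper.
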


We will establish Theorem \ref{P:chromatic} by investigating $\chi_{(P,\sigma)}(3) - \chi_{+P}(3)$ with the aid of several general lemmas and formulas.
Calculating the difference give the actual value, because
$$
\chi_{+P}(3) = \chi_P(3) = 120.
$$
A proof depends on the fact that every 3-coloration of $P$ has the same form as every other, under graph automorphisms and permutations of the colors.  In a coloration define a \emph{head vertex} to be a vertex whose neighbors have only one color.  
Each proper $3$-coloration of $P$ has a unique head vertex; and there are $12$ such colorations for each head vertex.  
(To prove this, examine the two ways to 3-color $N[v]$ where $v$ is the head vertex.  We omit the details.)  
To color with a given head vertex, one chooses its color, then chooses the neighborhood color, then colors the uncolored hexagon with the two non-neighborhood colors.  One concludes that $\chi_P(3) = 120$.

We begin preparing for the proof of Theorem \ref{P:chromatic} with the balanced expansion formula of \cite[Theorem 1.1]{CISG}, which states that for any signed graph $\Sigma = (\Gamma,\sigma)$, 
\begin{equation}
\chi_{\Sigma}(2\mu+1) = \sum_{\substack{W\subseteq V:\\ W \text{ independent}}} \chi_{\Sigma\setm W}^*(2\mu).
\label{E:balexp}
\end{equation}
(The proof is easy, by counting colorations according to the set $W$ with color $0$.)  Applying this to the difference of $\Sigma$ and $+\Gamma$,
\begin{align*}
\chi_{\Sigma}(2\mu+1) - \chi_{+\Gamma}(2\mu+1) 
&= \sum_{\substack{W\subseteq V:\\ W \text{ independent}}} \chi^*_{\Sigma\setm W}(2\mu) - \chi^*_{+\Gamma\setm W}(2\mu).
\end{align*}
The term of $W$ disappears if $\Sigma\setm W$ is balanced; thus, 
\begin{align}
\chi_{\Sigma}(2\mu+1) - \chi_{\Gamma}(2\mu+1) 
&= \sum_{\substack{W\subseteq V:\\ W \text{ independent,}\\ \Sigma\setm W \text{ unbalanced}}} \chi^*_{\Sigma\setm W}(2\mu) - \chi_{\Gamma\setm W}(2\mu),
\label{E:coldiff}
\end{align}
since $\chi^*_{+\Gamma}(y) = \chi_{\Gamma}(y)$.  

Observe that
\begin{equation}
\chi^*_\Sigma(2) = 
\begin{cases}
2^{c(\Sigma)}	&\text{ if } \Sigma \text{ is antibalanced}, \\
0		&\text{ if it is not}.
\end{cases}
\label{E:2col}
\end{equation}
To prove this, suppose a zero-free, proper 1-coloration exists.  Since there are only the two signed colors $+1$ and $-1$, a negative edge must have the same color at both ends and a positive edge must have oppositely signed colors at its ends.  Taking the bipartition of $V$ into sets of vertices with the same sign, that means a positive edge in $-\Sigma$ has both ends in the same part and a negative edge has ends in opposite parts.  Hence, $-\Sigma$ is balanced and $\Sigma$ is antibalanced.  If $\Sigma$ is antibalanced, there are two choices of color in each component.

\begin{lem}\label{L:bipartbalanti}
If $\Sigma$ has two of the properties of balance, antibalance, and bipartiteness, then it has the third property as well.
\end{lem}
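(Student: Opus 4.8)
The plan is to translate all three hypotheses into parity conditions on circles. For a circle $C$ write $\ell(C)$ for its length and $n^-(C)$ for the number of its negative edges, so that $\sigma(C) = (-1)^{n^-(C)}$. By the definition of balance, $\Sigma$ is balanced exactly when $n^-(C)$ is even for every circle $C$; and $|\Sigma|$ is bipartite exactly when $\ell(C)$ is even for every circle $C$. For antibalance, observe that replacing $\sigma$ by $-\sigma$ turns $n^-(C)$ into $\ell(C)-n^-(C)$ on each circle $C$; hence $-\Sigma$ is balanced --- i.e.\ $\Sigma$ is antibalanced --- precisely when $\ell(C)-n^-(C)$ is even for every circle $C$, that is, when $n^-(C)\equiv\ell(C)\pmod 2$ for every circle.

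With these reformulations the lemma is a triviality about parity. The three properties assert, respectively, that for every circle $C$ one has $n^-(C)\equiv 0$, $\ell(C)\equiv 0$, and $n^-(C)+\ell(C)\equiv 0$ modulo $2$. Since modulo $2$ each of the three quantities $n^-(C)$, $\ell(C)$, $n^-(C)+\ell(C)$ is the sum of the other two, the vanishing of any two of them (for every circle) forces the vanishing of the third (for every circle). That is exactly the assertion that any two of balance, antibalance, and bipartiteness imply the third. Because the criterion is applied circle by circle, no connectedness assumption on $\Sigma$ is needed.

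There is essentially no obstacle: the only step requiring a word of care is the circle characterization of antibalance, and that is immediate from the definition of $-\Sigma$ together with the circle definition of balance (see also Lemma \ref{L:switching}).
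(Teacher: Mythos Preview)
Your proof is correct and follows essentially the same approach as the paper: both arguments rest on the circle characterizations of the three properties and the elementary parity relation among them. Your version spells out the symmetry explicitly via the congruences $n^-(C)\equiv 0$, $\ell(C)\equiv 0$, $n^-(C)+\ell(C)\equiv 0\pmod 2$, whereas the paper phrases the same content more narratively (noting that in a bipartite graph balance and antibalance coincide, and that balance plus antibalance forbids odd circles), but there is no substantive difference.
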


\begin{proof}
Balance means every circle is positive.  Antibalance means every even circle is positive and every odd circle is negative.  
In a bipartite signed graph, balance and antibalance are equivalent.
In any signed graph, the conjunction of balance and antibalance implies there are no odd circles.  
\end{proof}

Now we can further simplify Equation \eqref{E:coldiff} when $\mu=1$.  
By Lemma \ref{L:bipartbalanti} there are three possibilities:  $\Gamma\setm W$ may be bipartite with $\Sigma\setm W$ not antibalanced, $\Sigma\setm W$ may be antibalanced but nonbipartite, or it may be nonbipartite and not antibalanced.  Then by Equation \eqref{E:2col}, 
\begin{align}
\begin{aligned}
\chi_{\Sigma}(3) - \chi_{\Gamma}(3) 
&= \sum_{\substack{W\subseteq V:\\ W \text{ independent,}\\ \Sigma\setm W \text{ antibalanced and not bipartite}}} 2^{c(\Gamma\setm W)} \\
&\quad - \sum_{\substack{W\subseteq V:\\ W \text{ independent,}\\ \Sigma\setm W \text{ bipartite and not antibalanced}}} 2^{c(\Gamma\setm W)}.
\end{aligned}
\label{E:3diff}
\end{align}

\begin{proof}[Proof of Theorem \ref{P:chromatic}]  
We use a formula deduced from Equation \eqref{E:3diff}.  For $k=0,1,2$, let 
\begin{align*}
\alpha_k(\Sigma) := &\text{ the number of independent sets } X \subseteq V \text{ such that } \Sigma\setm X \text{ is balanced.}
\end{align*}

\begin{lem}\label{L:petdiff}
For a signed Petersen graph, 
\begin{equation}
\chi_{(P,\sigma)}(3) - \chi_{+P}(3) 
= 2\alpha_0(-(P,\sigma)) + 2\alpha_1(-(P,\sigma)) + 2\alpha_2(-(P,\sigma)) - 4 c_6^-(P,\sigma) .
\label{E:petdiffsimp}
\end{equation}
\end{lem}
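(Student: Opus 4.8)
The starting point is Equation~\eqref{E:3diff}, which I would specialize to $\Sigma=(P,\sigma)$ and $\Gamma=P$ and then read off term by term. The only extra input is the classification of independent vertex sets of $P$ from Section~\ref{structure}: an independent $W\subseteq V$ has $|W|\le 4$; if $|W|\le 2$ then $P\setm W$ is connected; and if $|W|\ge 3$ then either $W=N(v)$, so $P\setm W=C_6\cupdot K_1$, or $W\subset X_m$, so $P\setm W$ is a tree, or $W=X_m$, so $P\setm W=3K_2$. Consequently $P\setm W$ is bipartite exactly when $|W|\ge 3$; it is a forest for every $|W|\ge 3$ other than $W=N(v)$; and for $|W|\le 2$ it is connected and (by the pentagon count in the last paragraph) nonbipartite.

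First I would treat the first sum in~\eqref{E:3diff}, over independent $W$ with $(P,\sigma)\setm W$ antibalanced and not bipartite. Since $P\setm W$ is bipartite whenever $|W|\ge 3$, only $W$ with $|W|\le 2$ survive, and for those $P\setm W$ is connected, so the weight $2^{c(P\setm W)}$ equals $2$. The surviving indices are precisely the independent $W$ with $|W|\le 2$ for which $(P,\sigma)\setm W$ is antibalanced, i.e.\ for which $\big(-(P,\sigma)\big)\setm W$ is balanced; sorting these by $|W|=0,1,2$ shows their number is $\alpha_0\big(-(P,\sigma)\big)+\alpha_1\big(-(P,\sigma)\big)+\alpha_2\big(-(P,\sigma)\big)$. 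Hence the first sum equals $2\big(\alpha_0+\alpha_1+\alpha_2\big)\big(-(P,\sigma)\big)$.

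Next I would treat the second sum, over independent $W$ with $(P,\sigma)\setm W$ bipartite and not antibalanced. Bipartiteness forces $|W|\ge 3$. If $W\subset X_m$ or $W=X_m$, then $P\setm W$ is a forest, so $(P,\sigma)\setm W$ has no circles and is vacuously antibalanced; those $W$ drop out. This leaves only $W=N(v)$, where $(P,\sigma)\setm N(v)$ lives on $C_6\cupdot K_1$ whose sole circle is the hexagon $H_v$; as $H_v$ has even length, $(P,\sigma)\setm N(v)$ is antibalanced iff $\sigma(H_v)=+$, hence it is \emph{not} antibalanced iff $H_v$ is a negative hexagon of $(P,\sigma)$. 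Each such $v$ contributes $2^{c(C_6\cupdot K_1)}=4$, and by the vertex--hexagon bijection of Section~\ref{hex} there are exactly $c_6^-(P,\sigma)$ of them, so the second sum is $4\,c_6^-(P,\sigma)$. Subtracting the two sums in~\eqref{E:3diff} then yields~\eqref{E:petdiffsimp}.

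The one point needing a little care is the claim that $P\setm W$ is nonbipartite for independent $W$ with $|W|\le 2$, equivalently that it still contains a pentagon. Since $P$ has $12$ pentagons, each vertex lies in $12\cdot 5/10=6$ of them; two nonadjacent vertices have a unique common neighbor, hence a unique joining path of length $2$, and each length-$2$ path lies in $12\cdot 5/30=2$ pentagons, so a nonadjacent pair lies in exactly $2$ common pentagons. By inclusion--exclusion $P\setm W$ then contains at least $12-6-6+2=2$ pentagons. Everything else is bookkeeping over the four types of independent set, the vacuous antibalance of forests being the subtlety worth flagging.
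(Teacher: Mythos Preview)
Your argument is correct and follows essentially the same route as the paper's proof: specialize Equation~\eqref{E:3diff}, classify the independent sets $W$ via Section~\ref{structure}, and evaluate the two sums. In fact your treatment is more complete than the paper's: the paper's proof lists only the cases $|W|\le 1$, $W$ a nonadjacent pair, and $W=N(v)$, omitting the $3$-subsets of $X_m$ that are not neighborhoods and the sets $X_m$ themselves; you correctly observe that these give forests, hence vacuously antibalanced subgraphs, and so contribute nothing to either sum. Your explicit pentagon count verifying nonbipartiteness for $|W|\le 2$ is also a detail the paper simply asserts.
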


\begin{proof}
By Section \ref{structure}, either $|W| \leq 1$, $W$ is a pair of nonadjacent vertices, or $W = N(v)$ for some vertex $v$.  In the former cases $P\setm W$ is connected and nonbipartite.  In the last case it is bipartite.

Suppose $(P,\sigma)\setm W$ is antibalanced and not bipartite.  
Because $P\setm W$ is not bipartite, $|W|\leq 2$.  Therefore, $P\setm W$ is connected and the term of $W$ contributes $2$ to the first summation if $(P,\sigma)\setm W$ is antibalanced, $0$ otherwise.  The respective contributions of $W$ of size $0, 1, 2$ are $2\alpha_0(-(P,\sigma))$, $2\alpha_1(-(P,\sigma))$, and $2\alpha_2(-(P,\sigma))$.

Suppose $P\setm W$ is bipartite and not antibalanced.  
Here $W = N(v)$ so $P\setm W = H_v \cupdot K_1$.  Because $(P,\sigma)\setm W$ is not antibalanced, the term of $W$ contributes $4$ to the second summation.  Each hexagon lies in $P\setm W$ for a unique $W = N(v)$.  Since the contribution of each negative hexagon to \eqref{E:petdiffsimp} is $-4$, the total contribution of all negative hexagons is $4 c_6^-(P,\sigma)$.
\end{proof}

It remains to evaluate the $\alpha_k$, as $c_6^-$ is given by Table \ref{Tb:circ}.  The results are in Table \ref{Tb:3diff} along with the values of $\chi_{(P,\sigma)}(3) - \chi_{+P}(3)$ and $\chi_{(P,\sigma)}(3)$.
\begin{table}[hbt]
\begin{center}
\begin{tabular}{|c||c|c|c|c|c|c|}
\hline
$(P,\sigma)$ \vstrut{15pt}&\hbox to 3em{\hfill$+P$\hfill} &\hbox to 3em{\hfill $P_1$\hfill} &\hbox to 3em{\hfill $P_{2,2}$\hfill} &{$P_{2,3} \simeq -P_1$} &\hbox to 3em{\hfill $P_{3,2}$\hfill} &{$P_{3,3} \simeq -P$}	\\[3pt]
\hline\hline
\vstrut{15pt}$\alpha_0(P,\sigma)$	&1	&0	&0	&0	&0	&0	\\[2pt]
\vstrut{15pt}$\alpha_1(P,\sigma)$	&10	&2	&0	&0	&0	&0	\\[2pt]
\vstrut{15pt}$\alpha_2(P,\sigma)$	&30	&14	&6	&4	&0	&0	\\[4pt]
\hline
\vstrut{15pt}$c_6^-(P,\sigma)$		&0	&4	&6	&4	&10	&0	\\[4pt]
\hline\hline
\vstrut{15pt}$\chi_{(P,\sigma)}(3) - \chi_{+P}(3)$	&0	&$-8$	&$-12$	&16	&$-40$	&82	\\[4pt]
\hline
\vstrut{15pt}$\chi_{(P,\sigma)}(3)$	&120	&112	&108	&136	&80	&202	\\[4pt]
\hline
\end{tabular}
\end{center}
\bigskip
\caption{The numbers necessary to prove Theorem \ref{P:chromatic}.}
\label{Tb:3diff}
\end{table}

Some of the values $\alpha_k$ are not obvious.  For $P_{3,2}$ and $-P$, all $\alpha_k = 0$ because $l_0 > 2$ (Theorem \ref{T:frno}).  
$\alpha_1(P_1) = 2$ because any edge is the intersection of two pentagons, hence only by deleting an endpoint of the negative edge can we balance $P_1$.  
$\alpha_1(P_{2,2}) = \alpha_1(P_{2,3}) = 0$ because each graph has $l_0 > 1$.  That leaves $\alpha_2$ of $P_{2,2},$ $P_1$, and $-P_1 \simeq P_{2,3}$.

Consider deleting a nonadjacent vertex pair from $P_{2,2} \simeq -P_{2,2}$.  Suppose the negative edges are $v_{15}v_{34}$ and $v_{23}v_{45}$ (see Figure \ref{F:P}).  We get balance by deleting one endpoint of each edge, ignoring $\{v_{15},v_{23}\}$ because those vertices are adjacent; that is three vertex pairs.  If we switch $v_{15}$ and $v_{23}$ first so the negative edges are $v_{15}v{24}$ and $v_{23}v_{14}$, we find three more ways to get balance.  Thus, the obvious approach gives six balancing sets.  These are all.  To prove that, we list four negative pentagons forming two vertex-disjoint pairs: 
$$A := v_{24}v_{15}v_{34}v_{12}v_{35} \text{ and } A' := v_{14}v_{23}v_{45}v_{13}v_{25},$$ 
and 
$$B := v_{14}v_{23}v_{45}v_{12}v_{35} \text{ and } B' := v_{34}v_{15}v_{24}v_{13}v_{25}.$$
We need one vertex from each pair, which means (Case 1) one from $A \cap B = \{v_{12},v_{35}\}$ and one from $A' \cap B' = \{v_{13},v_{25}\}$, or else (Case 2) one from $A \cap B' = \{v_{24},v_{15},v_{34}\}$ and one from $A' \cap B = \{v_{14},v_{23},v_{45}\}$.  The two other negative pentagons are 
$$C := v_{15}v_{23}v_{45}v_{13}v_{24} \text{ and } D := v_{34}v_{15}v_{23}v_{14}v_{25} .$$
Case 1 cannot cover both of these.  In Case 2, we can take any pair except $v_{24}v_{45}$, $v_{34}v_{14}$, or (because they are adjacent) $v_{15}v_{23}$.  Therefore, $\alpha_2(P_{2,2}) = 6$.

Next, consider $P_1$ with negative edge $v_{15}v_{23}$.  The obvious pairs are $v_{15}$ and any non-neighbor, and $v_{23}$ and any of its non-neighbors; that is 12 pairs.  Two pairs that are less obvious are $\{v_{24},v_{34}\}$ and $\{v_{14},v_{54}\}$, which eliminate all circles on $v_{15}$ and $v_{23}$, respectively.  To show there are no other possible pairs we list the negative pentagons:
$$
D, \ C, \ v_{12}v_{34}v_{15}v_{23}v_{45}, \ v_{14}v_{35}v_{24}v_{13}v_{25}.
$$
If a pair excludes $v_{15}$ and $v_{23}$ it needs one vertex from each of the following triples:
$$
v_{12}v_{34}v_{45}, \ v_{45}v_{13}v_{24}, \ v_{14}v_{35}v_{24}, \ v_{14}v_{25}v_{34},
$$
in which nonconsecutive sets are disjoint.  The possible pairs then are $v_{45}v_{14}$ and $v_{24}v_{34}$.  Thus, $\alpha_2(P_1) = 14.$

Finally, consider $-P_1 \sim P_{2,3}$ with (after switching $X_4$) negative edges $e := v_{12}v_{35}$ and $f := v_{13}v_{25}$.  The obvious pairs are one from $e$ and one from $f$.  They are the only ones possible.  As with $P_{2,2}$, $A, A', B, B'$ are negative and we have two cases.  Case 1 gives the four obvious vertex pairs.  Case 2 is impossible, because it fails to cover every negative pentagon, which is every pentagon that does not contain the edge $v_{15}v_{23}$.  Hence, $\alpha_2(P_{2,3}) = 4$.

The values of $\chi_{(P,\sigma)}(3) - \chi_{+P}(3)$ and $\chi_{(P,\sigma)}(3)$ follow from Lemma \ref{L:petdiff}.  
(I also calculated $\chi_{P_1}(3)$ and $\chi_{-P}(3)$ directly, confirming the values $112$ and $202$.)  
They are different for each switching isomorphism type; that proves the theorem.
\end{proof}

Theorem \ref{P:chromatic} suggests a problem.

\begin{question}\label{Q:chromatic}
Is it possible for two switching-nonisomorphic signatures of the same graph to have the same chromatic polynomial?  Can they have the same zero-free chromatic polynomial?
\end{question}

It is not possible for a 2-regular graph.

\begin{prop}\label{P:2regchromatic}
Two different, switching nonisomorphic signatures of the same $2$-regular graph have different chromatic polynomials and different zero-free chromatic polynomials.
\end{prop}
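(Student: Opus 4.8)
The plan is to reduce everything to the chromatic polynomial of a single signed circle and then exploit multiplicativity over components. A $2$-regular graph $\Gamma$ is a disjoint union of circles; let $m_n$ be the number of circles of length $n$ in $\Gamma$, so $m_n=0$ unless $n\ge3$ (since $\Gamma$ is simple) and $m_n\ne0$ for only finitely many $n$. The only circles of $\Gamma$ are its components, so by Lemma \ref{L:switching} a signature of $\Gamma$ up to switching equivalence is exactly an assignment of a sign to each component; and since $\Aut\Gamma$ realizes every permutation of the $m_n$ components of a fixed length $n$ (and reorientation or rotation of a circle does not change its sign), a signature up to switching isomorphism is exactly the sequence $(a_n)_{n\ge3}$ with $0\le a_n\le m_n$, where $a_n$ counts the negative circles of length $n$. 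By Proposition \ref{P:chromaticsw} both the chromatic polynomial and the zero-free chromatic polynomial depend only on $(a_n)$, so it suffices to recover $(a_n)$ from each of these polynomials.

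First I would record the one-circle polynomials. A positive circle $+C_n$ is balanced, so $\chi_{+C_n}(y)=\chi^*_{+C_n}(y)=(y-1)^n+(-1)^n(y-1)=:s_n(y)$. For a negative circle I would run a transfer-matrix computation on the palette $\{0,\pm1,\dots,\pm k\}$ (respectively $\{\pm1,\dots,\pm k\}$): the transfer matrices $J-I$ for a positive edge and $J-P$ for a negative edge, where $J$ is all-ones and $P$ is the negation permutation, all commute and so are simultaneously diagonalizable, which yields $\chi_{-C_n}(y)=(y-1)^n$ and $\chi^*_{-C_n}(y)=(y-1)^n-(-1)^n=:t_n(y)$. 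Since chromatic polynomials are multiplicative over components,
\[ \chi_{(\Gamma,\sigma)}(y)=\prod_n (y-1)^{na_n}\,s_n(y)^{m_n-a_n},\qquad \chi^*_{(\Gamma,\sigma)}(y)=\prod_n t_n(y)^{a_n}\,s_n(y)^{m_n-a_n}. \]
Writing $s_n(y)=(y-1)\,r_n(y)$ with $r_n(y):=(y-1)^{n-1}+(-1)^n$, the first product becomes $(y-1)^{\sum_n(m_n+(n-1)a_n)}\prod_n r_n(y)^{m_n-a_n}$ and the second becomes $(y-1)^{\sum_n(m_n-a_n)}\prod_n r_n(y)^{m_n-a_n}\,t_n(y)^{a_n}$.

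The recovery step reads $(a_n)$ off from orders of vanishing at shifted roots of unity. For each integer $e\ge2$ fix a primitive $e$-th root of unity $\rho$ and set $y_\rho:=1-\rho\notin\{0,1\}$; a substitution gives $r_n(y_\rho)=0\iff e\mid n-1$ and $t_n(y_\rho)=0\iff e\mid n$, and since $r_n'(y)=(n-1)(y-1)^{n-2}$ and $t_n'(y)=n(y-1)^{n-1}$ do not vanish at $y_\rho$ these roots are simple. As $y-1$ also does not vanish at $y_\rho$, the order of vanishing of $\chi_{(\Gamma,\sigma)}$ at $y_\rho$ equals $M_e-A_e$, where $M_e:=\sum_{e\mid n-1}m_n$ depends only on $\Gamma$ and $A_e:=\sum_{e\mid n-1}a_n$, while that of $\chi^*_{(\Gamma,\sigma)}$ equals $M_e-A_e+C_e$ with $C_e:=\sum_{e\mid n}a_n$. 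Hence $\chi_{(\Gamma,\sigma)}$ determines $A_e$, and $\chi^*_{(\Gamma,\sigma)}$ determines $C_e-A_e$, for every $e\ge2$. If two signatures $(a_n),(a'_n)$ had the same chromatic polynomial, then $\delta_n:=a_n-a'_n$ would satisfy $\sum_{e\mid n-1}\delta_n=0$ for all $e\ge2$; taking $e=n^*-1$ for $n^*$ the largest cycle length with $\delta_{n^*}\ne0$ (so $n^*-1\ge2$) leaves only the term $n=n^*$ in the sum (since $0<n-1\le n^*-1$ forces $n-1=n^*-1$), giving $\delta_{n^*}=0$, a contradiction. If instead they had the same zero-free chromatic polynomial, then $\sum_{e\mid n}\delta_n=\sum_{e\mid n-1}\delta_n$ for all $e\ge2$; taking $e=n^*$ makes the left side $\delta_{n^*}$ and the right side $0$, again a contradiction. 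So $(a_n)=(a'_n)$ in both cases, which is the proposition.

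I expect the main obstacle to be this recovery step: verifying exactly which of $r_n$, $t_n$, and $y-1$ vanishes, and to what order, at each shift $1-\rho$, and converting the resulting linear relations among the $a_n$ into the ``largest-length'' contradiction. The one-circle transfer-matrix computations of the second paragraph are routine.
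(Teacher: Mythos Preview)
Your argument is correct, and it is considerably more thorough than the paper's own proof. The paper simply asserts ``It suffices to consider a circle $C_l$'', computes $\chi$ and $\chi^*$ for $+C_l$ (via the classical chromatic polynomial of $C_l$) and for $-C_l$ (via the matroid/characteristic-polynomial machinery of \cite{SG,SGC}), and observes that the two differ. It never treats the multi-component case explicitly, so the reduction ``it suffices'' is left to the reader; your root-of-unity recovery of the sequence $(a_n)$ from the product polynomial is exactly the missing step, and it is not entirely trivial (one must rule out coincidences among products of the $s_n$, $t_n$, and powers of $y-1$).

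Two further points of contrast. First, your derivation of the single-circle polynomials by simultaneously diagonalizing $J-I$ and $J-P$ is a clean alternative to the paper's appeal to signed-graph matroid theory; it is more self-contained but gives the same formulas. Second, your parametrization of switching-isomorphism classes by $(a_n)$ makes the statement precise for disconnected $2$-regular graphs and shows directly that both polynomials are complete invariants on this class, which is a slightly stronger conclusion than merely ``different signatures give different polynomials''. The cost is a longer argument; the paper's version is short because it effectively proves only the connected case.
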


\begin{proof}
It suffices to consider a circle $C_l$ with two signatures, $\sigma_0$ in which it is positive and $\sigma_1$ in which it is negative.  It is well known that $\chi_{C_l}(y) = (y-1) \big[ (y-1)^{l-1} - (-1)^{l-1} \big]$; thus, 
$$
\chi_{(C_l,\sigma_0)}(y) = \chi^*_{(C_l,\sigma_0)}(y) = (y-1) \big[ (y-1)^{l-1} - (-1)^{l-1} \big] .
$$  

To calculate the polynomials of $\Sigma_1 := (C_l,\sigma_1)$ we apply the matroid theory of \cite{SG, SGC}.  By \cite[Theorem 5.1]{SG} the matroid $G(\Sigma_1)$ is the free matroid $F_l$ on $l$ points, whose characteristic polynomial is $\sum_A (-1)^{|A|}y^{|A|}$, summed over all flats, i.e., all subsets of $E$; thus it equals $(y-1)^l$.  By \cite[Theorem 2.4]{SGC}, $\chi_{\Sigma_1}(y)$ equals the characteristic polynomial of $F_l$.  
For $\chi^*_{\Sigma_1}(y)$ we sum only over balanced sets $A$; since the only unbalanced flat is $E$, $\chi^*_{\Sigma_1}(y) = (y-1)^l - (-1)^l$.
\end{proof}

A possible approach to Question \ref{Q:chromatic} may be through the geometrical interpretation of signed-graph coloring in \cite[Section 5]{IOP}.

%%====================================
\section{Clusterability}\label{clust}

A signed graph $\Sigma$ is called \emph{clusterable} if its vertices can be partitioned into sets, called clusters, so that each edge within a cluster is positive and each edge between two clusters is negative.  Such a partition is a \emph{clustering} of $\Sigma$.  By Proposition \ref{P:balance} balance is clusterability with at most two clusters.  
Clusterability is the other property we discuss, besides the automorphism group, that is not invariant under switching.  
Davis proposed it as a possibly more realistic alternative to balance as an ideal state of a social group \cite{Davis}, and he proved:

\begin{prop}\label{P:clust}
A signed graph is clusterable if and only if no circle has exactly one negative edge.
\end{prop}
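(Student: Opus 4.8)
The plan is to prove both implications by hand, taking the connected components of the positive subgraph $\Sigma^+$ as the candidate clusters.

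For the forward implication I would suppose $\Sigma$ has a clustering $\pi$ and, toward a contradiction, a circle $C = v_0v_1\cdots v_{n-1}v_0$ with exactly one negative edge, say $v_{n-1}v_0$. Deleting that edge from $C$ leaves a path through all of $v_0,\ldots,v_{n-1}$ every edge of which is positive, so consecutive vertices lie in a common block of $\pi$; hence $v_0,\ldots,v_{n-1}$ all lie in one block. But then the negative edge $v_{n-1}v_0$ lies inside a cluster, contradicting the definition of a clustering. So no circle of a clusterable signed graph has exactly one negative edge.

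For the converse I would assume no circle of $\Sigma$ has exactly one negative edge and let $\pi$ be the partition of $V$ into the vertex sets of the connected components of $\Sigma^+=(V,E^+)$, and claim $\pi$ is a clustering. Every positive edge has its two endpoints in the same component, so every edge between two distinct blocks of $\pi$ is negative --- half the requirement. It remains to show no negative edge lies within a block. If a negative edge $e=uv$ had $u,v$ in the same component of $\Sigma^+$, then there is a path $Q$ from $u$ to $v$ using only positive edges; because the underlying graph is simple and $uv$ is already a negative edge, $Q$ does not use $uv$, so it has length at least $2$ and $Q\cup\{e\}$ is a circle in which $e$ is the only negative edge, contradicting the hypothesis. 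Hence every negative edge joins two distinct blocks, $\pi$ is a clustering, and $\Sigma$ is clusterable.

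The only point requiring a little care is the passage from ``$u$ and $v$ lie in the same component of $\Sigma^+$'' to an actual circle rather than a closed walk: one should take $Q$ to be a shortest positive $u$--$v$ path, which is then an induced path, and note that simplicity together with $uv\in E^-$ forbids $uv\in E^+$, so $Q$ has at least two edges and $Q\cup\{e\}$ has length at least $3$. Beyond that bookkeeping the argument is elementary; there is no substantive obstacle, the content being simply that the components of $\Sigma^+$ are forced to be the clusters and the no-one-negative-edge condition is precisely what makes the negative edges respect that partition.
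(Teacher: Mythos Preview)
Your proof is correct. Note, however, that the paper does not actually prove this proposition: it is stated as Davis's result and attributed to \cite{Davis} without a proof in the paper itself, so there is no ``paper's own proof'' to compare against. Your argument---taking the components of $\Sigma^+$ as the candidate clusters and using a shortest positive path to build the forbidden circle---is the standard one and would be exactly what a reader filling in the omitted proof would write.
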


Clusterability of signed graphs has recently taken on new life in the field of knowledge and document classification under the name `correlation clustering' \cite{Bansal}.

\newcommand\clun{\operatorname{clu}}

There are (at least) two ways to measure clusterability.  When $\Sigma$ is clusterable, the smallest possible number of clusters is the \emph{cluster number} $\clun(\Sigma)$.  Even if a signed graph is inclusterable, it becomes clusterable when enough edges are deleted; the smallest such number is the \emph{inclusterability index} $Q(\Sigma)$.  

\begin{thm}\label{T:clcontraction}
The cluster number of a signed graph is $\clun(\Sigma) = \chi(|\Sigma|/E^+(\Sigma))$.  $\Sigma$ is clusterable if and only if $|\Sigma|/E^+(\Sigma)$ has no loops.  
\end{thm}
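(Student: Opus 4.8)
The plan is to set up an explicit dictionary between clusterings of $\Sigma$ and proper colorations of the contraction $|\Sigma|/E^+(\Sigma)$, and then read off both assertions.

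First I would unwind the definition of a clustering: a partition of $V$ is a clustering precisely when every positive edge joins two vertices of the same part and every negative edge joins vertices of different parts. The first requirement says exactly that each part is a union of connected components of $\Sigma^+=(V,E^+)$; equivalently, the clustering is induced by a partition of the vertex set of $|\Sigma|/E^+(\Sigma)$. Under this identification (a part of the clustering corresponds to a set of vertices of the contraction) the negative edges of $\Sigma$ become exactly the edges of $|\Sigma|/E^+(\Sigma)$: a negative edge $vw$ becomes a loop iff $v$ and $w$ lie in the same component of $\Sigma^+$, and becomes an ordinary edge otherwise. The second requirement — negative edges are inter-part — then translates verbatim into the statement that the partition is a \emph{proper} coloration of $|\Sigma|/E^+(\Sigma)$ (the presence of a loop making proper coloration impossible). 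Thus clusterings of $\Sigma$ and proper colorations of $|\Sigma|/E^+(\Sigma)$ correspond bijectively, and a clustering has $k$ clusters iff the corresponding coloration uses $k$ colors.

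Next I would dispatch the clusterability criterion. A loop at a vertex $W$ of $|\Sigma|/E^+(\Sigma)$ arises from a negative edge $vw$ with $v,w$ in the same component of $\Sigma^+$; adjoining a positive $v$–$w$ path in $\Sigma^+$ gives a circle of $\Sigma$ with exactly one negative edge. Conversely, any circle with exactly one negative edge $vw$ has $v$ and $w$ joined by a positive path, so $v,w$ lie in one component of $\Sigma^+$ and $vw$ yields a loop. Hence $|\Sigma|/E^+(\Sigma)$ has a loop if and only if $\Sigma$ has a circle with exactly one negative edge, and Davis's Proposition~\ref{P:clust} then gives: $\Sigma$ is clusterable iff $|\Sigma|/E^+(\Sigma)$ has no loops. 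When this holds, $|\Sigma|/E^+(\Sigma)$ is an ordinary loopless graph, its chromatic number is defined, and by the bijection of the previous paragraph, minimizing the number of clusters over all clusterings equals minimizing the number of colors over all proper colorations, i.e.\ $\clun(\Sigma)=\chi(|\Sigma|/E^+(\Sigma))$. (The degenerate extremes fit: if $E^+=\eset$ no contraction occurs and $\clun(\Sigma)=\chi(|\Sigma|)$; if $E^-=\eset$ the contraction is an edgeless graph and $\clun(\Sigma)=1$.)

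I do not expect a genuine obstacle here; the only point that needs care is the loop bookkeeping in the contraction — verifying that loops of $|\Sigma|/E^+(\Sigma)$ correspond \emph{exactly} to Davis's one-negative-edge circles — together with the harmless observation that parallel negative edges produced by the contraction merely repeat the same inequality between two clusters and so do not affect colorability or the chromatic number.
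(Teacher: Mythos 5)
Your proposal is correct and follows essentially the same route as the paper: both proofs rest on the bijection between clusterings of $\Sigma$ and proper colorations of $|\Sigma|/E^+(\Sigma)$, with parts of a clustering being unions of components of $\Sigma^+$ and negative edges becoming the edges of the contraction. The only difference is cosmetic: you justify the loop criterion by matching loops to circles with exactly one negative edge and invoking Davis's Proposition~\ref{P:clust}, whereas the paper gets it directly from the bijection (a loop obstructs any proper coloration, and a loopless graph always admits one); both are valid.
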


Thus an all-positive signed graph is a cluster by itself: $\clun(+\Gamma) = 1$.  For an all-negative signed graph, $\clun(-\Gamma) = \chi(\Gamma)$.

\begin{proof}
In the contraction $\Gamma' := |\Sigma|/E^+$, let $[v] \in V'$ denote the vertex corresponding to $v \in V$.

Suppose $\Sigma$ has a clustering $\pi = \{V_1, \ldots, V_k\}$ into $k$ parts (with each $V_i$ nonempty).  That means, first, that all positive edges are contained within $V_i$'s, so each $[v]$ is contained within a set $V_i$.  Furthermore, two vertices $[u], [v] \in V'$ that lie within the same $V_i$ are nonadjacent, since $E' = E^-$ and no negative edges are within $V_i$.  Therefore the function $\kappa: V \to \{1,2,\ldots,k\}$ defined by $\kappa(v) = i$ if $[v] \subseteq V_i$ is a (proper) coloration of $\Gamma'$, and furthermore every color is used at one or more vertices.  
($\kappa$ is determined by $\pi$ only up to permutations of the colors.)

Conversely, if $\kappa'$ is a (proper) coloration of $\Gamma'$ using exactly $k$ colors, say with color set $\{1,2,\ldots,k\}$, let $V_i := \{v \in V:  \kappa'([v]) = i\}$.  That implies $\Gamma'$ has no loops and that every color is applied to a vertex, so no $V_i$ is empty.  Then in $\Sigma$, no negative edge can lie within a set $V_i$ and, because every positive edge of $\Sigma$ is within a set $[v]$, it lies inside a $V_i$.  Hence, $\pi = \{V_1, \ldots, V_k\}$ is a clustering of $\Sigma$ into $k$ clusters.

Consequently, clusterings of $\Sigma$ coincide (modulo permuting the colors) with $k$-colorations of $\Gamma'$ that use all $k$ colors, for any $k$.  The theorem follows immediately.
\end{proof}

Observe that $|\Sigma|/E^+(\Sigma) = |\Sigma|/E^-(-\Sigma)$.  Thus, the contraction used here in connection with $\Sigma$ is the same one used in Theorem \ref{T:col} in connection with $-\Sigma$.

To supplement Davis's criterion for clusterability---that is, for zero inclusterability index---we state a criterion for unit index.  The proof is a simple check.

\begin{prop}\label{P:clust1}
 $Q(\Sigma) = 1$ if and only if there is a circle with exactly one negative edge and there is an edge common to all such circles.
\end{prop}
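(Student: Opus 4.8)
The plan is to reduce everything to Davis's criterion (Proposition \ref{P:clust}) together with the elementary observation about how deleting an edge affects circles. Call a circle \emph{bad} if it has exactly one negative edge; by Proposition \ref{P:clust} a signed graph is clusterable if and only if it has no bad circle, so $Q(\Sigma) = 0$ exactly when $\Sigma$ has no bad circle. By definition $Q(\Sigma) = 1$ means that $\Sigma$ is inclusterable but deleting some single edge $e$ makes it clusterable; so the task is to characterize when $\Sigma \setm e$ is clusterable.

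The key step is to note that removing an edge $e$ changes neither the sign of any remaining edge nor the family of circles that avoid $e$: a circle of $\Sigma \setm e$ is precisely a circle of $\Sigma$ that does not use $e$, and it carries the same sign product. In particular, edge deletion can only destroy circles, never create a new bad one. Hence $\Sigma \setm e$ has no bad circle---i.e., is clusterable---if and only if every bad circle of $\Sigma$ contains $e$.

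Putting the pieces together: $Q(\Sigma) = 1$ holds iff $\Sigma$ has at least one bad circle (so $Q(\Sigma) \geq 1$) and there is an edge $e$ with $\Sigma \setm e$ clusterable, which by the previous paragraph means $e$ lies on every bad circle. Since such an $e$ then automatically lies on the nonempty set of bad circles, this is exactly the assertion that there is a circle with exactly one negative edge and an edge common to all such circles. I do not expect a real obstacle here; the only point worth a sentence of care is that edge deletion cannot manufacture a new one-negative-edge circle, which is immediate because deletion only shrinks the circle family and leaves signs untouched.
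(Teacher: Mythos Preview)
Your argument is correct and is exactly the ``simple check'' the paper alludes to: apply Davis's criterion to $\Sigma$ and to $\Sigma\setminus e$, noting that deleting an edge removes precisely the circles through $e$ and changes no signs. The paper gives no further detail, so there is nothing to compare.
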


\begin{thm}\label{T:clust}
The clusterabilities of the minimal signed Petersen graphs and their negatives are as stated in Table \ref{Tb:clust}.
\end{thm}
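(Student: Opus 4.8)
The plan is to compute, for each of the six minimal signed Petersen graphs $(P,\sigma)$ and its negative, both clusterability measures by reducing to coloring invariants of the contraction $|\Sigma|/E^+(\Sigma)$, which by Theorem~\ref{T:clcontraction} equals $\chi(|\Sigma|/E^+(\Sigma))$ for the cluster number and is finite exactly when that contraction has no loops. Since $|\Sigma|/E^+(\Sigma) = |\Sigma|/E^-(-\Sigma)$, most of the contractions have already been analyzed in the proof of Theorem~\ref{T:col}, so I would first tabulate which graph each contraction is. For $-P = P_{M_5\text{-type all}}$ (all edges negative) the contraction is $P$ itself, so $\clun(-P) = \chi(P) = 3$ and it is clusterable. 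For $+P$ the contraction $P/E(P)$ is a single vertex with no loops, so $\clun(+P) = 1$. For the heterogeneous cases I would identify $|\Sigma|/E^+(\Sigma)$ with the contraction of $P$ by the complement of the negative matching; in each case $E^+$ is $P$ minus a small matching, so the contraction is obtained by coalescing the two endpoints of each positive edge --- equivalently by contracting all but $1$, $2$, or $3$ edges.

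The second step is to decide clusterability (finiteness of $Q$) for each, using the second half of Theorem~\ref{T:clcontraction}: $\Sigma$ is clusterable iff $|\Sigma|/E^+(\Sigma)$ has no loop, equivalently (Proposition~\ref{P:clust}) iff no circle of $\Sigma$ has exactly one negative edge. Since $+P$ and $-P$ are homogeneous they are trivially clusterable. For a heterogeneous $P_{k,d}$, a circle with exactly one negative edge would be a circle through exactly one edge of the negative matching; because $P$ has girth $5$ and the negative edges are spread out, I expect to find such circles in every heterogeneous case (e.g.\ a pentagon through a single negative edge), making all of $P_1, P_{2,2}, P_{2,3}, P_{3,2}, P_{3,3}$ and their negatives inclusterable, hence with $Q \geq 1$. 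For each I would then apply Proposition~\ref{P:clust1}: $Q = 1$ iff all circles with exactly one negative edge share a common edge. For $P_1$ (one negative edge $e$) every bad circle contains $e$, so $Q(P_1) = 1$; the same will hold for $-P_1$ by symmetry of the argument on its single positive edge once we recheck the criterion. For $P_{2,2}, P_{2,3}, P_{3,2}, P_{3,3}$ I expect the bad circles to have no common edge (there are bad pentagons through different negative edges that are edge-disjoint), forcing $Q \geq 2$; then I would exhibit an explicit two-edge deletion making each clusterable, establishing $Q = 2$ in those cases. The negatives $-P_{2,2}$, etc., need separate checks since clusterability is not switching- or negation-invariant, but the structure of circles-with-one-negative-edge is the same computation with signs reversed.

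The main obstacle I anticipate is pinning down the cluster number $\clun$ for the heterogeneous cases where $\Sigma$ is clusterable --- but in fact the only clusterable signed Petersens are $+P$ (trivially $\clun = 1$) and $-P$ (with $\clun = \chi(P) = 3$), plus possibly borderline cases; so the real work is verifying inclusterability and computing $Q$. The delicate part of the $Q$ computation is the lower bound $Q \geq 2$: I must show that for $P_{2,2}, P_{2,3}, P_{3,2}, P_{3,3}$ (and their negations where inclusterable) deleting any single edge leaves a circle with exactly one negative edge. I would do this by exhibiting, for each, two edge-disjoint "bad" circles (circles with exactly one negative edge), which immediately gives $Q \geq 2$ since one deletion cannot meet both; concretely, in $P_{2,3}$ take pentagons through each of the two negative edges chosen disjoint, and in $P_{3,2}, P_{3,3}$ use the abundance of pentagons guaranteed by Theorem~\ref{T:circ}. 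Finally I would collect all values into Table~\ref{Tb:clust} and note that, unlike the switching invariants, these numbers do not separate $(P,\sigma)$ from $-(P,\sigma)$ and in several cases coincide across non-switching-isomorphic types, underscoring that clusterability is a coarser, non-switching-invariant measure.
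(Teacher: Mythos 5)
Your overall framework (Theorem~\ref{T:clcontraction} for cluster numbers, Proposition~\ref{P:clust} for clusterability, Proposition~\ref{P:clust1} and edge-disjoint ``bad'' circles for lower bounds on $Q$) matches the paper's, but two of your concrete conclusions are wrong, and each corresponds to a substantial missing piece of the table. First, the negatives $-P_1, -P_{2,2}, -P_{2,3}, -P_{3,2}, -P_{3,3}$ are all \emph{clusterable}, not inclusterable as you assert. In $-P_{k,d}$ the positive edge set is a matching of at most $3$ edges, while a circle with exactly one negative edge must contain at least $4$ positive edges (since $P$ has girth $5$); so Davis's criterion is vacuously satisfied and $Q=0$ for every one of these negatives. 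Your heuristic ``a pentagon through a single negative edge'' is correct for the minimal signatures, where $E^-$ is the small matching, but it inverts for the negations, where $E^-$ is nearly all of $E$. Consequently your plan omits the entire computation of $\clun(-P_1)=\clun(-P_{2,2})=\clun(-P_{2,3})=3$, $\clun(-P_{3,2})=4$, and $\clun(-P_{3,3})=2$, which the paper obtains by identifying the contractions $P/E^+$ (e.g.\ $P/E^+(-P_{3,3})=K_{3,4}$, and $P/E^+(-P_{3,2})$ is three triangles overlapping at vertices plus one more vertex, forcing chromatic number $4$).

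Second, $Q(P_{3,2})=Q(P_{3,3})=3$, not $2$; the ``explicit two-edge deletion making each clusterable'' that you intend to exhibit does not exist. Two edge-disjoint bad circles only give $Q\ge 2$; to get the correct lower bound you need the transversal argument the paper uses: in each of $P_{3,2}$ and $P_{3,3}$ all the \emph{sharp} pentagons (those with exactly one edge on the outer pentagon of Figure~\ref{F:3neg-clust}) have exactly one negative edge, any two of them share only a single edge, and no three share a common edge, so any set of edges meeting all of them has at least three elements. Combined with $Q\le|E^-|=3$ this gives $Q=3$. Your values for $+P$, $-P$, $P_1$, $P_{2,2}$, and $P_{2,3}$ are correct (with the upper bound $Q\le|E^-|$ doing the work the explicit deletions would), but as written the proposal would produce a table that is wrong in seven of its twelve columns.
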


\begin{table}[hbt]
\begin{center}
\begin{tabular}{|r||c|c|c|c|c|c|c|c|c|c|c|c|}
\hline
$(P,\sigma)$	\vstrut{15pt}
&\hbox to 2em{\,$+P$}	&\hbox to 2em{\,$-P$}	
&\hbox to 2em{\;\;$P_1$}	&\hbox to 2em{$-P_1$}
&\hbox to 2em{\,$P_{2,2}$}	&\hbox to 2.4em{$-P_{2,2}$}	
&\hbox to 2em{\,$P_{2,3}$}	&\hbox to 2.4em{$-P_{2,3}$}	
&\hbox to 2em{\,$P_{3,2}$}	&\hbox to 2.4em{$-P_{3,2}$}	
&\hbox to 2em{\,$P_{3,3}$}	&\hbox to 2.4em{$-P_{3,3}$}	\\[2pt]
\hline
\vstrut{15pt}$\clun(P,\sigma)$	
&1	&3	
&--	&3	
&--	&3	
&--	&3	
&--	&4	
&--	&2	\\
\vstrut{15pt}$Q(P,\sigma)$	
&0	&0	
&1	&0	
&2	&0	
&2	&0	
&3	&0	
&3	&0	\\[2pt]
\hline
\end{tabular}
\bigskip
\end{center}
\caption{The clusterability measures of the minimal signed Petersen graphs and their negatives.  A dash denotes an inclusterable signature.}
\label{Tb:clust}
\end{table}

\begin{proof}
The cluster numbers are obvious for $+P$, which is balanced, and $P_1, P_{2,2}, P_{2,3}, P_{3,2}, P_{3,3}$, all of which violate Davis's criterion for clusterability.  The negatives of these graphs are clusterable; their cluster numbers follow from Theorem \ref {T:clcontraction}.  Specifically:

The contraction $P/E^+(-P_{2,2})$ has a triangle and is easy to color in 3 colors; thus, $\clun(-P_{2,2}) = 3$.  

The more complex graph $P/E^+(-P_{3,2})$ consists of three triangles overlapping at vertices---which require three colors arranged so that the three divalent vertices have different colors---and one more vertex adjacent to the divalent vertices; therefore, the chromatic number is 4.  That gives $\clun(-P_{3,2}) = 4$.

The contraction $P/E^+(-P_{3,3}) = K_{3,4}$.  Thus, $\clun(-P_{3,3}) = 2$.

The contraction $P/E^+(-P_{2,3})$ is $K_{3,4}$ with one vertex split, forming a $C_5$.  As the contraction is nonbipartite, $\clun(-P_{2,3}) > 2$, but as only one vertex was split, only one more color is needed.

The fact that clusterability is equivalent to having inclusterability index 0 leaves five signatures with positive inclusterability index.  Clearly, $Q(\Sigma) \leq |E^-|$.  That implies $Q(P_1) = 1$.  Proposition \ref {P:clust1} implies that the other inclusterability indices are at least 2, since in each $P_{k,d}$ there are two edge-disjoint circles containing exactly one negative edge each.  Consequently, $Q(P_{2,2}) = Q(P_{2,3}) = 2$.

In each of $P_{3,2}$ and $P_{3,3}$, all the pentagons with one edge on the outer pentagon in Figure \ref{F:3neg-clust} have exactly one negative edge.  Call them the \emph{sharp pentagons}.  To make the signed graph clusterable we must eliminate (at least) all sharp pentagons; thus, we have to remove at least an edge from each one.  Any two sharp pentagons have just one edge in common, and no three of them have a common edge.  Therefore, to eliminate sharp pentagons one has to delete at least three edges.  It follows that $Q(P_{3,2}) = Q(P_{3,3}) = 3$.
\end{proof}

\begin{figure}[htbp]
\begin{center}
\includegraphics[scale=.55]{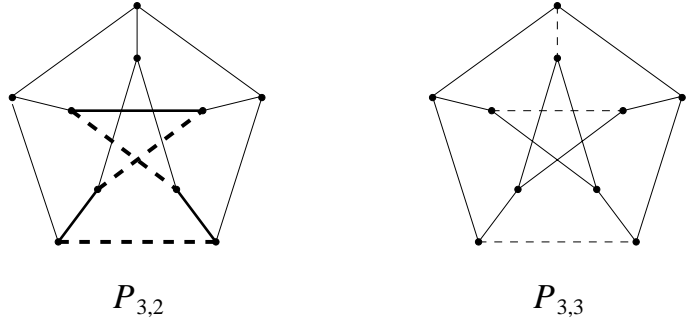}
\caption{Signed Petersen graphs with three negative edges.  Each sharp pentagon has one negative edge.}
\label{F:3neg-clust}
\end{center}
\end{figure}

As clusterability is not a switching invariant, the data in Table \ref{Tb:clust} are not sufficient to describe all signatures of the Petersen graph.  The number of inequivalent clustering problems equals the number of nonisomorphic edge 2-colorations of $P$, which is large.  That makes it interesting to ask about the maximum inclusterability of $P$, defined as the maximum inclusterability index of any signature.

\begin{thm}\label{T:maxclust}
The largest inclusterability index of any signed Petersen graph is $3$.
\end{thm}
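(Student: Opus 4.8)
The plan is to bound the inclusterability index of an arbitrary signature of $P$ by its frustration index, which is a switching invariant already computed in Table \ref{Tb:fr}, and then note that the bound is attained.

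First I would record the general inequality $Q(\Sigma) \le l(\Sigma)$, valid for every signed graph. Let $B$ be a minimum balancing set, so $|B| = l(\Sigma)$ and $\Sigma \setm B$ is balanced. A balanced signed graph is clusterable: this is immediate from Proposition \ref{P:balance} (balance is clusterability with at most two clusters), or from Davis's criterion Proposition \ref{P:clust}, since a balanced graph has no negative circle at all, in particular none with exactly one negative edge. Hence deleting the $|B|$ edges of $B$ makes $\Sigma$ clusterable, so $Q(\Sigma) \le |B| = l(\Sigma)$.

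Next I would apply this with $\Sigma = (P,\sigma)$ for an arbitrary signature $\sigma$. By Lemma \ref{L:swfr} the frustration index is a switching invariant, and by Theorem \ref{T:fr} every signature of $P$ is switching isomorphic to one of the six minimal types, whose frustration indices are at most $3$ by Table \ref{Tb:fr}. Therefore $Q(P,\sigma) \le l(P,\sigma) \le 3$ for every $\sigma$. On the other hand, by Theorem \ref{T:clust} (Table \ref{Tb:clust}) we have $Q(P_{3,2}) = 3$ (equivalently $Q(P_{3,3}) = 3$), so the bound is achieved. Combining, the maximum inclusterability index over all signatures of the Petersen graph is exactly $3$.

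The only subtlety — and it is not really an obstacle — is that $Q$ itself is \emph{not} switching invariant, so one cannot simply read the answer off a table of the six minimal representatives; it is precisely the inequality $Q \le l$ that lets us push the switching-invariant bound on $l$ down to a bound on $Q$. A direct argument inspecting all edge-2-colorations of $P$ would be vastly more laborious and is unnecessary.
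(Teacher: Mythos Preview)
Your proof is correct and is genuinely different from the paper's argument. You establish the clean general inequality $Q(\Sigma)\le l(\Sigma)$ (since deleting a minimum balancing set leaves a balanced, hence clusterable, graph) and then invoke the already-computed bound $l(P,\sigma)\le 3$ from Table~\ref{Tb:fr}; attainment comes from Table~\ref{Tb:clust}. The paper instead argues directly on the clusterability side: it first proves Lemma~\ref{L:cutclust} (a cut with a negative majority forces $Q<|E^-|$) and deduces Proposition~\ref{P:cubicmaxclust} (in a cubic graph, maximum $Q$ is attained only when $E^-$ is a matching), then checks the $4$- and $5$-edge matchings of $P$ case by case to rule out $Q\ge 4$. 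Your route is shorter and leverages the switching-invariant work on $l$ already done in Section~\ref{fr}; the paper's route is more self-contained within the clusterability section and yields, as by-products, the two general results Lemma~\ref{L:cutclust} and Proposition~\ref{P:cubicmaxclust}, which have independent interest beyond the Petersen graph. Your closing remark is exactly on point: $Q$ is not switching invariant, and it is precisely the inequality $Q\le l$ that lets a switching-invariant bound descend to a bound on $Q$.
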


\begin{proof}
Several of the signatures in Table \ref{Tb:clust} attain inclusterability 3, so the problem is to prove no higher value is possible.  

We begin with two general observations.  First, every signed graph satisfies
\begin{equation}
\Sigma' \subseteq \Sigma \implies Q(\Sigma') \leq Q(\Sigma).
\label{E:subclust}
\end{equation}
Second, here are properties of general graphs and cubic graphs.

\begin{lem}\label{L:cutclust}
If the underlying graph of a signed graph $\Sigma$ has a cut with more negative than positive edges, then $Q(\Sigma) < |E^-|$.
\end{lem}

\begin{proof}
If there is a cut $\del X$ with more negative than positive edges, delete the positive edges of $\del X$ and any negative edges outside $\del X$.  In the remaining graph $(P,\sigma) \setm S$ the negative edges form a cut, so $(P,\sigma) \setm S$ is clusterable; but as the number of edges that were deleted is less than $|E^-|$, $Q(P,\sigma) < |E^-|$.  
\end{proof}

\begin{prop}\label{P:cubicmaxclust}
Let $\Gamma$ be a graph whose maximum degree is at most $3$.  The maximum inclusterability index of any signature is attained only by signatures in which the negative edge set is a matching.
\end{prop}

\begin{proof}
This follows from Lemma \ref{L:cutclust} by examining the vertex cuts $\del\{v\}$ in a signature that maximizes inclusterability.
\end{proof}

\emph{Proof of Theorem \ref{T:maxclust}, continued.}
We may assume that $(P,\sigma)$ is a signed Petersen that has maximum inclusterability and that $E^-$ is a matching.  A matching in $P$ has at most 5 edges.  The matchings were classified in Section \ref{matchings}.

If $|E^-|$ has 5 edges, it separates two pentagons.  Since $(P,\sigma) \setm E^-$ is all positive, $(P,\sigma)$ is clusterable with two clusters that are the vertex sets of the pentagons of $P \setm E^-$.

Suppose, then, that $E^-$ is a matching with 4 edges.

Lemma \ref{L:cutclust} applies when $E^- = M_5 \setm$ edge, with $X = V(C)$ where $C$ is one of the pentagons separated by $M_5$.

If the matching is $M_4'$, there is a hexagon $H_{lm}$ with three negative edges and the fourth negative edge $d$ is incident with $v_{lm}$ (Figure \ref{F:m3types}).  The two negative edges at distance 2 from $d$, together with $d$, are part of an $M_5$ that is a 5-edge cut with three negative edges.

It follows that $Q(P,\sigma) < 4$ when $E^-$ is a 4-edge matching, so the theorem is proved.
\end{proof}

%%====================================

\section{Other Aspects}\label{other}

The signed Petersen graphs have other properties that we intend to treat elsewhere.  

For instance, we can establish the smallest surface in which each $(P,\sigma)$ can be embedded so that a circle is orientable if and only if it is positive (this is called \emph{orientation embedding}).  This embeddability, by its definition, is a property of switching isomorphism classes, so there are just six cases.  The only signature that embeds in the projective plane is $P_{2,3}$; as $P$ is nonplanar, every other signature of $P$ embeds only in a higher nonorientable surface (if not balanced) or in the torus (if balanced). % \cite{Pet}.

Another aspect is the relationship between $(P,\sigma)$ and its signed covering graph (the `derived graph' of \cite[Section 9]{Biggs2}), in which each vertex of $P$ splits into a pair, $+v$ and $-v$, and edges double as well, with positive edges connecting vertices of the same sign and negative edges connecting vertices of opposite sign.  The switching automorphisms of the signed graph are closely related to the fibered automorphisms of the signed covering.

As $\Aut\Sigma$ is not invariant under switching, there is a very large number of possible automorphism groups of signed Petersen graphs: as many as there are nonisomorphic sets of signatures with negatives paired together.  (We should pair $\Aut(\Sigma)$ with $\Aut(-\Sigma)$ because they have the same automorphisms by Proposition \ref{P:negaut}.)  Sometimes the two members of the pair are isomorphic.  Table \ref{Tb:aut} shows examples.)  The number of such sets is unknown.

Switching and the switching automorphism group generalize from the sign group to any group $\fG$.  A \emph{gain graph} is a graph whose edges are labelled invertibly by elements of $\fG$; this means that, if $\phi(e)$ is the gain of oriented edge $e$ and $e\inv$ is $e$ in the opposite orientation, then $\phi(e\inv) = \phi(e)\inv$.  Gain graphs and switching over arbitrary groups were introduced in \cite{BG1}.  Many of the basic properties of switching automorphisms should extend to the general case, though some, such as the simple description of the switching kernel $\fK$, may depend on having an abelian group, and some (at least, the property that the gain is independent of direction) require a group of exponent 2.  This brief description is just an outline; a complete theory of switching automorphisms over an arbitrary gain group, and its application to examples, are open problems.

%%====================================


\begin{thebibliography}{99}

\bibitem{PsL}  Robert P.\ Abelson and Milton J.~Rosenberg, 
Symbolic psycho-logic:  a model of attitudinal cognition,  
\emph{Behavioral Sci.}\ {}{3} (1958), 1--13.  

\bibitem{Bansal} Nikhil Bansal, Avrim Blum, and Shuchi Chawla, 
Correlation clustering,      
in  \emph{Proc.\ 43rd Ann.\ IEEE Sympos.\ Foundations of Computer Science (FOCS '02)}, pp.\ 238--247.
Zbl 1089.68085.  

\bibitem{BMRU} F.\ Barahona, R.\ Maynard, R.\ Rammal, and J.P.\ Uhry, 
Morphology of ground states of two-dimensional frustration model.  
\emph{J.\ Phys.\ A:  Math.\ Gen.}\  15 (1982), 673--699.  
MR 83c:82045.  

\bibitem{IOP} Matthias Beck and Thomas Zaslavsky, 
Inside-out polytopes.
\emph{Adv.\ Math.} {\bf 205} (2006), no.\ 1, 134--162.
MR 2007e:52017.  Zbl 1107.52009.

\bibitem{Biggs2} Norman Biggs, 
\emph{Algebraic Graph Theory}, second ed.,  
Cambridge University Press, Cambridge, Eng., 1993.  
MR 95h:05105.  Zbl 797.05032.

\bibitem{CH} Dorwin Cartwright and Frank Harary, 
Structural balance:  a generalization of Heider's theory,  
\emph{Psychological Rev.} {}{63} (1956), 277--293.  
%Repr.\ in:  Dorwin Cartwright and Alvin Zander, eds., \emph{Group Dynamics: Research and Theory}, second ed., pp.\ 705--726.  New York, Harper and Row, 1960.  
%Also repr.\ in:  Samuel Leinhardt, ed., \emph{Social Networks: A Developing Paradigm}, pp.\ 9--25.  New York, Academic Press, 1977.  
%MR none.  Zbl none.

\bibitem{Davis} James A.\ Davis, 
Structural balance, mechanical solidarity, and interpersonal relations,  
\emph{Amer.\ J.\ Sociology} {}{68} (1963), 444--463.
%Repr.\ with minor changes in:  Joseph Berger, Morris Zelditch, Jr., and Bo Anderson, eds., \emph{Sociological Theories in Progress, Vol.\ One}, Ch.\ 4, pp.\ 74--101,  Boston, Houghton Mifflin, 1966.  
%Also repr.\ in:  Samuel Leinhardt, ed., \emph{Social Networks: A Developing Paradigm}, pp.\ 199--217,  New York, Academic Press, 1977.  
%MR none.  Zbl none.

\bibitem{AGT}  Chris Godsil and Gordon Royle, 
\emph{Algebraic Graph Theory},  
%Grad.\ Texts in Math., Vol.\ 207.  
Springer, New York, 2001.
MR 2002f:05002.  Zbl 968.05002.

\bibitem{NB} F.\ Harary, 
On the notion of balance of a signed graph, 
\emph{Michigan Math.\ J.} {}{2} (1953--54), 143--146 and addendum preceding p.\ 1.
MR 16, 733h.  Zbl 056.42103.

\bibitem{MSB} ------, %Frank (?) Harary
On the measurement of structural balance,  
\emph{Behavioral Sci.}\ 4 (1959), 316--323.  
MR 22 \#3696.  %Zbl none.

\bibitem{TPG} D.A.\ Holton and J.\ Sheehan,  
\emph{The Petersen Graph},
Cambridge University Press, Cambridge, 1993.
MR 94j:05037.  Zbl 781.05001.

\bibitem{Konig} D\'enes K\"onig,
\emph{Theorie der endlichen und unendlichen Graphen},
Akademische Verlagsgesellschaft, Leipzig, 1936.  
Repr.\ Chelsea, New York, 1950. 
MR 12, 195.  Zbl 013.22803.

\bibitem{Petersdorf} M.\ Petersdorf, 
Einige Bemerkungen \"uber vollst\"andige Bigraphen,  
\emph{Wiss.\ Z.\ Techn.\ Hochsch.\ Ilmenau} 12 (1966), 257--260.  
MR 37 \#1275.  Zbl 156, 443b (e: 156.44302).

\bibitem{Soz} Tadeusz Soza\'nski, 
Processus d'\'equilibration et sous-graphes \'equilibr\'es d'un graphe sign\'e complet,  
\emph{Math.\ Sci.\ Humaines}, No.\ 55 (1976), 25--36, 83.  
MR 58 \#27613.  %Zbl none.

\bibitem{Toulouse} G\'erard Toulouse.  
Theory of the frustration effect in spin glasses:\ I,
\emph{Commun.\ Phys.} {}{2} (1977), 115--119.
%MR none.  Zbl none.
Repr.\ in 
M.~M\'ezard, G.~Parisi, and M.A.~Virasoro, {\it Spin Glass Theory and Beyond},  Singapore, World Scientific, 1987, pp.\ 99--103.
%MR none.  Zbl none.

\bibitem{CSG} Thomas Zaslavsky, 
Characterizations of signed graphs,  
\emph{J.\ Graph Theory} {}{5} (1981), 401--406.  
MR 83a:05122.  Zbl 471.05035.

\bibitem{SG} ------, %Thomas Zaslavsky, 
Signed graphs, 
\emph{Discrete Appl.\ Math.} {}{4} (1982), 47--74.  
Erratum, 
\emph{Discrete Appl.\ Math.} {}{5} (1983), 248.  
MR 84e:05095.  Zbl 503.05060.

\bibitem{SGC} ------, %Thomas Zaslavsky, 
Signed graph coloring, 
\emph{Discrete Math.} {}{39} (1982), 215--228.  
MR 84h:05050a.  Zbl 487.05027.

\bibitem{CISG} ------, %Thomas Zaslavsky, 
Chromatic invariants of signed graphs.  
\emph{Discrete Math.} {}{42} (1982), 287--312. 
MR 84h:05050b.  Zbl 498.05030.

\bibitem{BG1} ------, %Thomas Zaslavsky,
Biased graphs.  I.\  Bias, balance, and gains.  
\emph{J.\ Combin.\ Theory Ser.\ B} {}{47} (1989), 32--52.  
MR 90k:05138.  Zbl 714.05057.

\bibitem{BSG} ------, %Thomas Zaslavsky, 
A mathematical bibliography of signed and gain graphs and allied areas,  
\emph{Electronic J.\ Combin.}, Dynamic Surveys in Combinatorics (1998), No.\ DS8 (electronic).
MR 2000m:05001a.  Zbl 898.05001.

\end{thebibliography}
\end{document}